\documentclass[USenglish]{article}
\usepackage[utf8]{inputenc}
\usepackage[T1]{fontenc}
\usepackage{amsmath, amssymb, amsthm, amscd, amsfonts, graphicx, fancyhdr, color, mathrsfs,tikz-cd,tikz,tikz, comment, verbatim, fancyvrb}
\usetikzlibrary{positioning,automata,shapes,decorations}
\usepackage{latexsym}
\usepackage{url, multicol, hyperref}
\usepackage{subfig}
\usepackage{multirow}
\usepackage{geometry}                
\geometry{letterpaper}                   
\DeclareGraphicsRule{.tif}{png}{.png}{`convert #1 `dirname #1`/`basename #1 .tif`.png}
\hyphenpenalty=1000
\tolerance=2000
\emergencystretch=10pt


\newtheorem{theorem}{Theorem}[section]
\newtheorem{conjecture}[theorem]{Conjecture}
\newtheorem{lemma}[theorem]{Lemma}
\newtheorem{proposition}[theorem]{Proposition}
\newtheorem{corollary}[theorem]{Corollary}

\theoremstyle{definition}

\theoremstyle{definition}

\theoremstyle{definition}
\newtheorem{remark}[theorem]{Remark}
\theoremstyle{definition}
\newtheorem{example}[theorem]{Example}
\theoremstyle{definition}
\newtheorem{definition}[theorem]{Definition}
\theoremstyle{definition}

\theoremstyle{definition}

\theoremstyle{definition}
\newtheorem{question}[theorem]{Question}

\newtheorem*{thm1}{Theorem \ref{thm:saddles}}
\newtheorem*{thm2}{Theorem \ref{thm:abcd}}
\newtheorem*{thm3}{Theorem \ref{thm:double}}


\newcommand{\fv}[4]{\begin{bmatrix} #1  \\ #2 \\ #3 \\ #4 \end{bmatrix}} 
\newcommand{\sfv}[4]{\left[\begin{smallmatrix} #1 \\ #2 \\ #3 \\ #4 \end{smallmatrix}\right]} 
\newcommand{\bv}[2]{\begin{bmatrix} #1  \\ #2 \end{bmatrix}} 
\newcommand{\sv}[2]{\left[\begin{smallmatrix} #1  \\ #2 \end{smallmatrix}\right]} 
\newcommand{\bhv}[2]{\begin{bmatrix} #1  & #2 \end{bmatrix}} 
\newcommand{\shv}[2]{\left[\begin{smallmatrix} #1  & #2 \end{smallmatrix}\right]} 
\newcommand{\bm}[4]{\begin{bmatrix} #1 & #2 \\ #3 & #4 \end{bmatrix}} 
\newcommand{\sm}[4]{\left[\begin{smallmatrix} #1 & #2 \\ #3 & #4 \end{smallmatrix}\right]} 
\newcommand{\ar}[2]{{#1}^{#2}}

\newcommand{\vv}{\mathbf{v}}

\newcommand{\R}{\mathbf{R}}
\newcommand{\Q}{\mathbf{Q}}
\newcommand{\N}{\mathbf{N}}
\newcommand{\Z}{\mathbf{Z}}
\newcommand{\D}{\mathfrak{D}}
\newcommand{\bs}{\mathbf{s}}
\newcommand{\qrf}{\mathbf{Q}(\sqrt 5)}
\newcommand{\fti}{{\bf(FILL THIS IN)}}
\newcommand{\h}{\mathcal{H}}

\newcommand{\1}{\mathbf{1}}
\newcommand{\2}{\mathbf{2}}
\newcommand{\3}{\mathbf{3}}
\newcommand{\4}{\mathbf{4}}
\newcommand{\5}{\mathbf{5}}

\newcommand{\etalchar}[1]{$^{#1}$}


\setlength{\textwidth}{6.0truein}
\setlength{\textheight}{9.0truein}

\renewcommand\hat\widehat

\renewcommand\floatpagefraction{.9}
\renewcommand\topfraction{.9}
\renewcommand\bottomfraction{.9}
\renewcommand\textfraction{.1}
\setcounter{totalnumber}{50}
\setcounter{topnumber}{50}
\setcounter{bottomnumber}{50}
\title{Periodic paths on the pentagon, \\ double pentagon and golden L}
\author{Diana Davis and Samuel Lelièvre}

\begin{document}
\maketitle


\pagestyle{myheadings}


\begin{abstract}

  We give a tree structure on the set of all periodic directions on the golden L, which gives an associated tree structure on the set of periodic directions for the pentagon billiard table and double pentagon surface. We use this to give the periods of periodic directions on the pentagon and double pentagon. We also show examples of many periodic billiard trajectories on the pentagon, which are strikingly beautiful, and we describe some of their properties. Finally, we give conjectures and future directions based on experimental computer evidence.\footnote{2010 MSC: 37E35, Flows on surfaces}
\end{abstract}

\section{Main results, introduction and background} \label{sec:intro}

\subsection{Periodic trajectories in polygons: existence, abundance, enumeration}

One problem in billiards is whether every polygonal billiard table has a periodic path. This question is open and of great interest even for triangles, where Richard Schwartz showed that every triangle whose largest angle is less than $100^\circ$ has a periodic path, and this result was recently extended by four authors to $112.3^\circ$ \cite{100, twelve}. From Howard Masur's result we know that every rational polygon has a periodic path, and in fact countably many \cite{masur}. That every \emph{regular} polygon has a periodic path is clear; a path connecting midpoints of any two edges extends to a periodic path.

One specific family of polygons is those that tile the plane by
reflections: the square, rectangle, equilateral triangle, hexagon,
half-equilateral triangle, and isosceles right triangle. For those,
because of the possibility to develop to the plane, it is easy to
describe the countable set of directions in which there
are periodic paths, and to enumerate those periodic paths, as we
recall in \S\ref{subsec:square} for the case of the square.

\begin{figure}[!ht]
  \centering
  \includegraphics[width=0.24\textwidth]{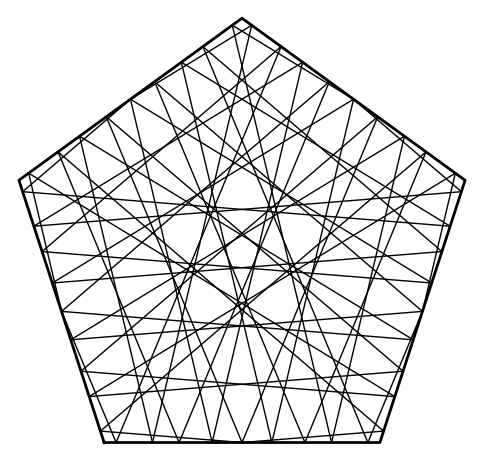} \
  \includegraphics[width=0.24\textwidth]{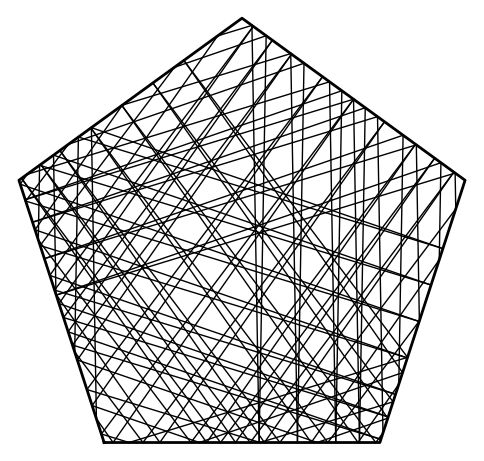}\
  \includegraphics[width=0.24\textwidth]{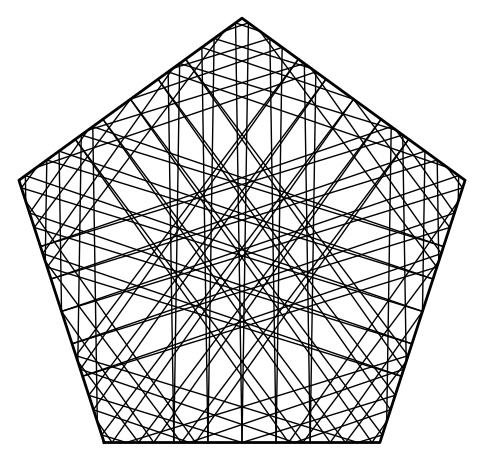}\
  \includegraphics[width=0.24\textwidth]{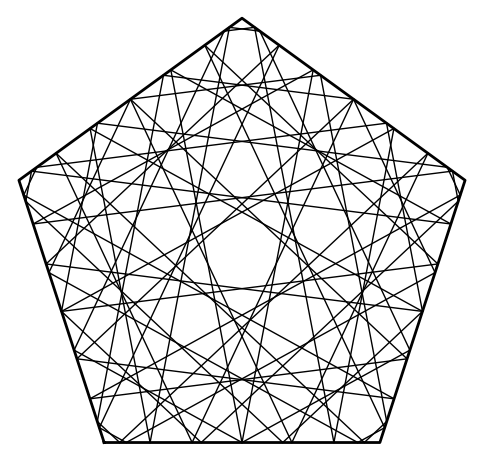}
  \caption{Some periodic paths on the regular pentagon. These are the trajectories 1000-short, 1231-long, 102-short, and 133-short, respectively (see \S\ref{sec:tree}). \label{fig:nice_examples}}
\end{figure}

In a sense, the next interesting polygons to look at are non-tiling regular polygons and non-tiling rational triangles, where existence
is known but enumeration is new. Here we enumerate periodic paths on
the regular pentagon, which allows us to produce pictures of striking
beauty. Several examples of such paths appear above (Figure \ref{fig:nice_examples}).

\subsection{Main results}

To study billiard paths on polygons, an essential tool is \emph{unfolding} the billiard table into a surface. For the regular pentagon billiard table, the associated surface is a \emph{necklace} made of 10 regular pentagons, and the related \emph{double pentagon} surface made of two regular pentagons, each with oppositely-oriented parallel edges identified (Figure \ref{fig:necklace}). The double pentagon is related to a third translation surface, the \emph{golden L}, via a shear and a cut and paste (Figure \ref{fig:cutandpaste}). Because the golden L is made of rectangles and is thus easier to use than a regular pentagon, we use the golden L surface in this paper for all of our theory and computations, and then we translate them into our desired results for the regular pentagon surface and billiard table.

The golden L is a right-angled, L-shaped translation surface obtained by gluing together
a $1\times 1$ square and two rectangles, each
with a long side of length 1 and a short side $\phi$ times shorter,
where $\phi$ is the golden ratio, that is, the positive root of
the polynomial $x^2 - x - 1$. (Figure \ref{fig:golden-L-color} and Definition \ref{def:goldenL}). We give a description of all saddle connection vectors on the golden L in a quaternary \emph{tree} analogous to the Farey tree; this tree is our main tool.

\begin{figure}[!ht]
  \begin{center}
    \includegraphics[width=0.3\textwidth]{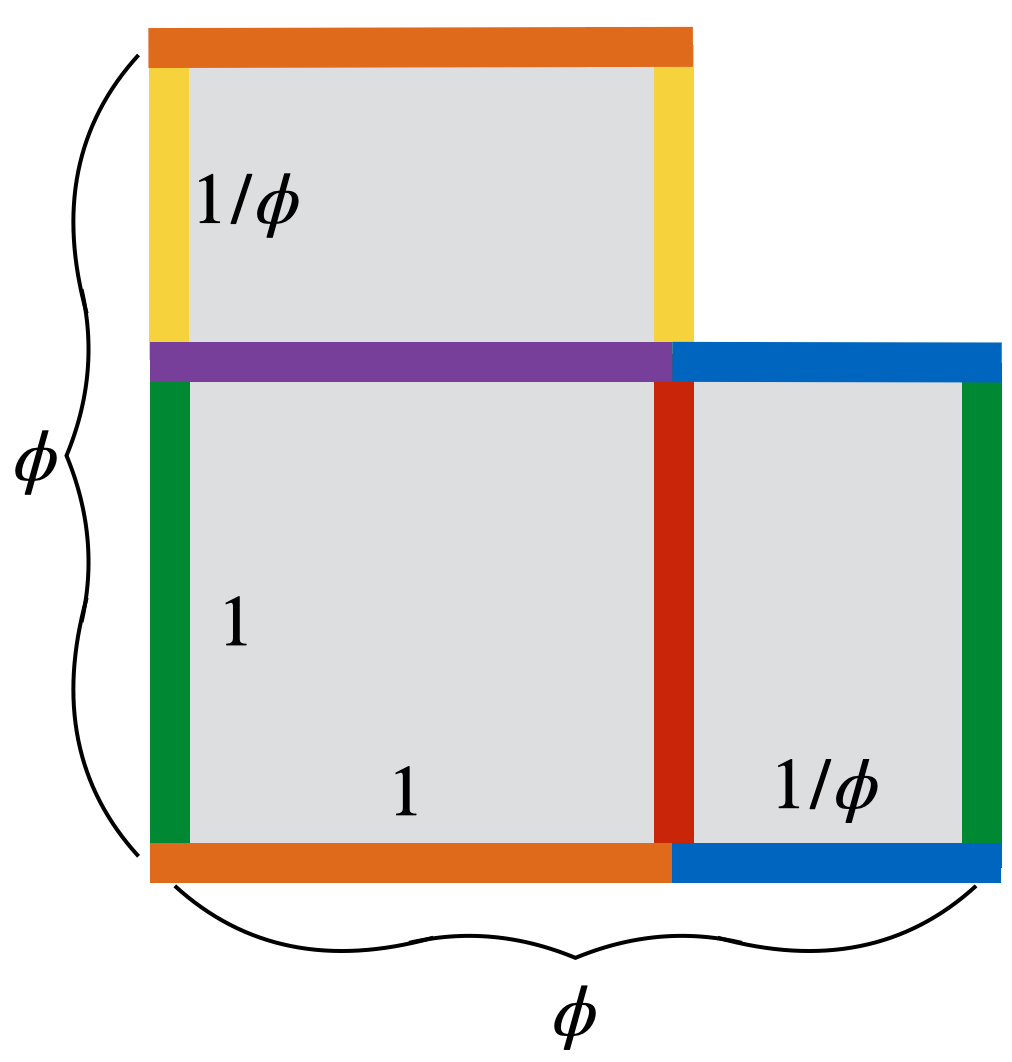}
    \caption{The golden L translation surface, with edge identifications indicated by color, and edge lengths as indicated in the figure \label{fig:golden-L-color}}
  \end{center}
\end{figure}

We define four matrices
$\sigma_0 = \begin{bmatrix}1 & \phi \\ 0 & 1 \\ \end{bmatrix}$,
$\sigma_1 = \begin{bmatrix}\phi & \phi \\ 1 & \phi \\ \end{bmatrix}$,
$\sigma_2 = \begin{bmatrix}\phi & 1 \\ \phi & \phi \\ \end{bmatrix}$,
$\sigma_3 = \begin{bmatrix}1 & 0 \\ \phi & 1 \\ \end{bmatrix}$.

We denote by $\Sigma$ the set of nonzero vectors
$\begin{bmatrix}x \\ y \\ \end{bmatrix}\in \R^2$
with $x > 0$ and $y \ge 0$, which we call the \emph{positive cone}.
The four sectors $\Sigma_i = \sigma_i \Sigma$ for $i \in \{0, 1, 2, 3\}$
form a partition of $\Sigma$.

\begin{thm1}
  Let $A^*$ be the set of all finite words on the alphabet $A=\{\sigma_0, \sigma_1, \sigma_2, \sigma_3\}$ of linear maps taking the positive cone to its four partitioning sectors (Definition \ref{def:sigmas}). Then the set of all vectors $\sigma\sv 10$, for $\sigma\in A^*$, gives the set of short cylinder vectors in the positive cone.
\end{thm1}

A periodic direction on the golden L is a direction with slope in $\mathbf{Q}(\sqrt 5)$ \cite{arnoux}. There are, of course, many ways to express such a slope as a vector. For the canonical vectors, we use \emph{saddle connections}: line segments whose endpoints are both at a singularity (``corner'') of the surface, where the cone angle is $6\pi$. For example, in Figure \ref{fig:golden-L-color}, we call the horizontal blue saddle connection the ``short,'' and the orange and purple horizontal the ``long'' saddle connections (also see Figure \ref{fig:saddle-connections}).  We use the long saddle connection vector in each periodic direction; this is the vector obtained from our algorithm to generate the tree.

\begin{thm2}
  Corresponding to any periodic direction vector $\vv$ in the first quadrant on the golden L is a unique sequence \mbox{$a_\vv = (k_0, k_1, ..., k_n)$} of sectors as in Definition~\ref{def:algorithm}, such that $\vv= \ell_\vv \sigma_{k_0} \ldots \sigma_{k_n} \sv{1}{0}$ for some length $\ell_\vv$. Then the long saddle connection vector in the direction $\mathbf{v}$ has $\ell_\vv=1$ and is thus $\sigma_{k_0}\ldots\sigma_{k_n}\sv 10$. Furthermore, the long saddle connection vector is of the form $\sv{a+b\phi}{c+d\phi}$ for $a,b,c,d\in\N$.
\end{thm2}

We use the saddle connection vector in this canonical representation to determine the combinatorial period (number of edge crossings before repeating) of the associated double pentagon trajectory:

\begin{thm3}In each periodic direction, there are two periodic trajectories, whose corresponding direction in the golden L has long saddle connection vector $\sv{a+b\phi}{c+d\phi}$ as above. The short trajectory in this direction has holonomy vector $\vv = \sv{a+b\phi}{c+d\phi}$ in the golden L and combinatorial period $2(a+b+c+d)$ in the double pentagon, and the long trajectory in this direction has holonomy vector $\phi\vv$ in the golden L and combinatorial period $2(a+2b+c+2d)$ in the double pentagon.
\end{thm3}

This result for the double pentagon surface is directly analogous to the corresponding result on the square billiard table, where a trajectory with slope $p/q$ has associated holonomy vector $[q,p]$ and period $2(p+q)$; see \S \ref{sec:periods}.

\subsection{Analogous known results for the square}\label{subsec:square}
For a rational billiard table, the set of periodic directions is countable and dense \cite{masur}. In this paper, we describe the structure of this set for the pentagon and the golden L.
The structure of the set of periodic directions is well understood for the square torus and square billiard table. We summarize those results here, as we will extend them to the golden L surface, the double pentagon surface and the pentagon billiard table.

\begin{theorem} \label{thm:square}
  For the square torus and square billiard table:
  \begin{enumerate}
    \item The periodic directions on the square torus and  billiard table are those with rational slope.
    \item A trajectory with slope $a/b$ (in lowest terms) on the square torus has combinatorial period $a+b$, and on the square billiard table has combinatorial period $2(a+b)$.
  \end{enumerate}
\end{theorem}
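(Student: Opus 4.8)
The plan is to reduce both parts, for the torus and for the billiard, to elementary lattice geometry of straight-line flow on a flat torus. First I would set up the square torus as $\mathbf{R}^2/\mathbf{Z}^2$, on which a trajectory of slope $a/b$ lifts to a straight line in $\mathbf{R}^2$ with direction vector $(b,a)$. Then I would treat the billiard by the standard unfolding: reflecting the unit square in its sides tiles the plane, and gluing opposite sides of the $2\times 2$ block $[0,2]^2$ by translation yields the torus $\mathbf{R}^2/(2\mathbf{Z})^2$, with the coordinatewise tent (folding) map exhibiting the square billiard as a degree-four quotient of this torus. Under this correspondence a billiard trajectory is periodic exactly when its unfolded geodesic is closed, so the billiard question becomes the torus question on a larger torus.

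For part (1), I would argue that a torus geodesic in direction $(b,a)$ closes up precisely when some positive multiple of $(b,a)$ lands in the lattice, i.e. exactly when $a/b$ is rational. For the converse I would invoke minimality of the irrational linear flow: a geodesic of irrational slope equidistributes and is in particular dense, hence not closed. Transporting this through the unfolding gives the same dichotomy for the billiard, so rational slopes are periodic and irrational ones are not.

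For part (2), fix $a/b$ in lowest terms, so $(b,a)$ is a primitive vector of $\mathbf{Z}^2$. On $\mathbf{R}^2/\mathbf{Z}^2$ the closed geodesic then has holonomy $(b,a)$, and I would compute the combinatorial period by counting crossings of the grid lines $x\in\mathbf{Z}$ and $y\in\mathbf{Z}$ along one period: a run of $b$ crosses $b$ vertical lines and a rise of $a$ crosses $a$ horizontal lines, giving $a+b$. Primitivity of $(b,a)$ guarantees that the only lattice points on the segment are its two endpoints, so, choosing a representative trajectory that avoids the vertices, no crossing is a corner and no two crossings coincide; hence the period is exactly $a+b$ and not a proper divisor. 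For the billiard the closed geodesic lives on $\mathbf{R}^2/(2\mathbf{Z})^2$ and has holonomy $(2b,2a)$; the identical count gives $2b+2a$ crossings of the unit grid, each descending under the folding map to exactly one reflection of the billiard trajectory, so the combinatorial period is $2(a+b)$.

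The routine parts are the crossing counts; the points needing genuine care are (a) confirming that ``lowest terms'', i.e. $\gcd(a,b)=1$, is exactly what forces the holonomy to be primitive and the period to equal $a+b$ rather than a divisor, and (b) checking that the unfolding/folding correspondence is faithful, namely that each grid-line crossing of the torus geodesic matches one and only one edge-hit of the billiard, with none lost at the interior fold line and none spuriously created, since this faithfulness is precisely what produces the factor of $2$. I expect (b) to be the main obstacle, as it requires handling trajectories that pass near vertices and committing to a generic representative in the given direction.
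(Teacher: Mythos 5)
Your proposal is correct and follows essentially the same route as the paper: unfold the billiard table to a torus, develop to the plane, and count grid-line crossings, with the factor of $2$ arising from the four-copies-per-torus unfolding (your $\mathbf{R}^2/(2\mathbf{Z})^2$ with unit grid is the paper's half-integer grid, just rescaled). The only minor variation is in part (1), where you certify the periodicity dichotomy via the lattice closing criterion plus minimality of the irrational flow, whereas the paper uses the pigeonhole observation that a rational-slope trajectory meets only finitely many boundary points; both are standard and equally valid.
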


\begin{proof}
  We assume $a,b\in\mathbf{N}$ and that $a/b$ is in lowest terms, and we assume as usual that the trajectory does not hit a vertex.
  \begin{enumerate}
    \item  A trajectory with rational slope meets only a finite number of points on the edges of the table, so it must be periodic; unfolding a periodic path yields a trajectory with rational slope.
    \item Developing the square torus to a grid of squares, we can see that a line of slope $a/b$ on the square grid repeats itself after going up $a$ squares (down if $a<0$) and to the right $b$ squares (left if $b<0$). The number of edges crossed is thus $a+b$.
          Unfolding the square billiard table to the square torus requires $4$ copies of the table inside each torus, which is equivalent to adding grid lines at half integers, so the path repeats after $2(a+b)$ edge crossings of these lines.
  \end{enumerate}
\end{proof}
For more discussion and for illustrations, see \cite{flatsurfaces}, \S 4.

\subsection{Previous work on the pentagon, double pentagon and golden L}\label{sec:previous}

William Veech studied billiards in regular polygons, including the regular pentagon \cite{veech}. Curt McMullen characterized the L-shaped polygons that are lattice (Veech) surfaces; one such surface is the golden L, an example given in that paper \cite[Theorem 9.2 and Figure 4]{curt}.

In \cite{DFT}, the first author, Dmitry Fuchs and Sergei Tabachnikov explored periodic directions on the double pentagon surface and pentagon billiard table, as we do here. There is some overlap between that paper and the current paper, but we use different methods and have a different emphasis. In particular, in that paper the hyperbolic plane in the Poincar\'e model is used as the basis for all of the direction computations on the pentagon, and the results on periodic directions are given in these terms. By contrast, in the current paper we use translation surfaces, in particular the golden L, as the basis for our direction computations. We obtain several of the same results, using new translation surface methods $-$ our Theorem \ref{thm:saddles} characterizes periodic directions using a Farey-like tree based on the golden L, as does their Theorem 1 using the hyperbolic plane; their matrices $X_i$ in \S 2.2 are the same as our matrices $\sigma_i$ in Definition \ref{def:sigmas}; their generating transition diagrams in equation (1) and ``enhanced graphs'' in Figure 14 are the same as our transition diagrams (under a permutation on edge labels) in Lemma \ref{lem:dptd}; their Theorems 7 and 10 combine to give a result analogous to our Combinatorial Period Theorem \ref{thm:double}, but our methods for that Theorem are more direct.

The following results are new:  Theorems~\ref{thm:saddles} and \ref{thm:abcd} characterizing saddle connections on the golden L, Theorem~\ref{thm:tree_symm} about the symmetry of the tree of directions, Corollary~\ref{cor:palindrome} about the structure of cutting sequences on the double pentagon, \S\ref{sec:veech_group}-\ref{sec:symm_group} and Theorem~\ref{thm:symm} about the Veech group of the necklace surface and the applications to the symmetries of trajectories on the pentagon billiard, Corollary~\ref{cor:sixths} giving the proportion of paths with and without rotational symmetry, Theorem~\ref{thm:geom-symm} and Proposition~\ref{prop:corecurve} giving details about how different types of paths arise, Propositions~\ref{prop:evens} and \ref{prop:10} about the periods that arise in the double pentagon and the pentagon billiard, and \S\ref{sec:non-equi} about non-equidistribution of certain periodic billiard paths. Furthermore, the pictures from our \verb|Sage| program allow us to look at periodic trajectories on the pentagon for the first time, which are strikingly beautiful and afford us additional insights about the system.

The second author, Jayadev Athreya and Jon Chaika studied slope gaps on the golden L, which is the same surface that we use here, but with an entirely different focus \cite{acl}.

\subsection{Related work}\label{sec:related}

Much is known about geodesics on regular polygons and their associated combinatorics. John Smillie and Corinna Ulcigrai \cite{SU,SU2} characterize all cutting sequences corresponding to geodesics on the regular octagon surface and extend their results to all regular polygons with an even number of edges. The first author \cite{davis13} did similarly for the double regular pentagon and all surfaces made from two regular polygons with an odd number of sides. Subsequently Davis and Ulcigrai, along with Irene Pasquinelli, extended this work to the Bouw-Möller surfaces, a large family of Veech surfaces made from $m\geq 2$ semi-regular $2m$-gons \cite{DPU}. The present paper characterizes all periodic directions on the double regular pentagon, and gives substitution rules (Definition \ref{def:decorated} and \S~3) for augmenting a cutting sequence.

After reading this paper, Diaaeldin Taha wrote a paper \cite{T18} in which he defines a tree structure on the orbit of the vector $[1,0]$ under the linear action of each Hecke group $G_q$, for $q\geq 3$ (\S~2.1-2.2). That generalizes the tree structure we define here, which corresponds to the case $q=5$. In Taha's paper \cite{T19}, he defines an analogue of the continued fraction algorithm for each Hecke group, similarly extending what we do here in Definition \ref{def:algorithm} and Proposition \ref{prop:terminates} for the case $q=5$. Regarding Hecke groups and their finite-index subgroups, see also the paper by Cheng Lien Lang and Mong Lung Lang \cite{lang}.

\subsection*{Acknowledgements}
This project began at the Oberwolfach workshop ``Flat Surfaces and Dynamics on Moduli Space'' in 2014. We are grateful to ``Dynamics and Geometry in the Teichm\"uller Space'' at CIRM in 2015, ``Flat Surfaces and Dynamics of Moduli Space'' at BIRS in Oaxaca in 2016, ``Teichm\"uller Space, Polygonal Billiard, Interval Exchanges'' at CIRM in 2017, the Billiards Cluster at Tufts University in 2017, ``Teichm\"uller Dynamics, Mapping Class Groups and Applications'' at Institut Fourier in 2018, DD's visits to the IHES in spring 2020 and 2022, and the Summer@ICERM program in 2021, which allowed for our continued collaborations.

All of our computations were done in \verb|Sage|, many using \verb|CoCalc| \cite{sage, cocalc}. We thank Pierre Arnoux, Pascal Hubert, Jordan Emme, Sergei Tabachnikov, Ronen Mukamel, Barak Weiss, Corinna Ulcigrai, Curt McMullen and John Smillie for productive conversations about this project.

\section{The double pentagon and the golden L}

A classical construction attributed to Zemylakov and Katok \cite{fox,katok} relates billiards on a rational polygon to a linear flow on a translation surface, by reflecting the billiard table across its edges until each edge is matched with an opposite parallel edge of the same length. It is easier to study linear flows than billiard paths, because the direction of the flow does not change.

We first unfold the pentagon to its corresponding flat surface, called the \emph{necklace}, which is tiled by 10 copies of the pentagon. Veech gave a presentation of the necklace in \cite[Figure 2.3]{veech}. The necklace is a 5-fold cover of the \emph{double pentagon}. For previous work on the double pentagon, see \cite{davis13}. Since the double pentagon is simpler, we consider trajectories there instead of on the necklace. Then we perform an affine automorphism of the double pentagon to transform it into the golden L. The golden L is easier to work with than the double pentagon, because it is a right-angled surface.

\subsection{From the pentagon to the necklace, double pentagon and golden L}
\label{subsec:necklace}

We unfold the pentagon billiard table across its edges, labeling each image edge with the same label as the original billiard table edge, until each edge has an oppositely-oriented, parallel edge with the same label. This requires 10 copies of the pentagon (the order of the dihedral group $D_5$), which can be arranged in a circle or \emph{necklace}, shown in the middle of Figure \ref{fig:necklace}. The edge identifications, which are inherited from the unfolding of the pentagon table, are as labeled in the figure.

\begin{figure}[!ht]
  \includegraphics[width=\textwidth]{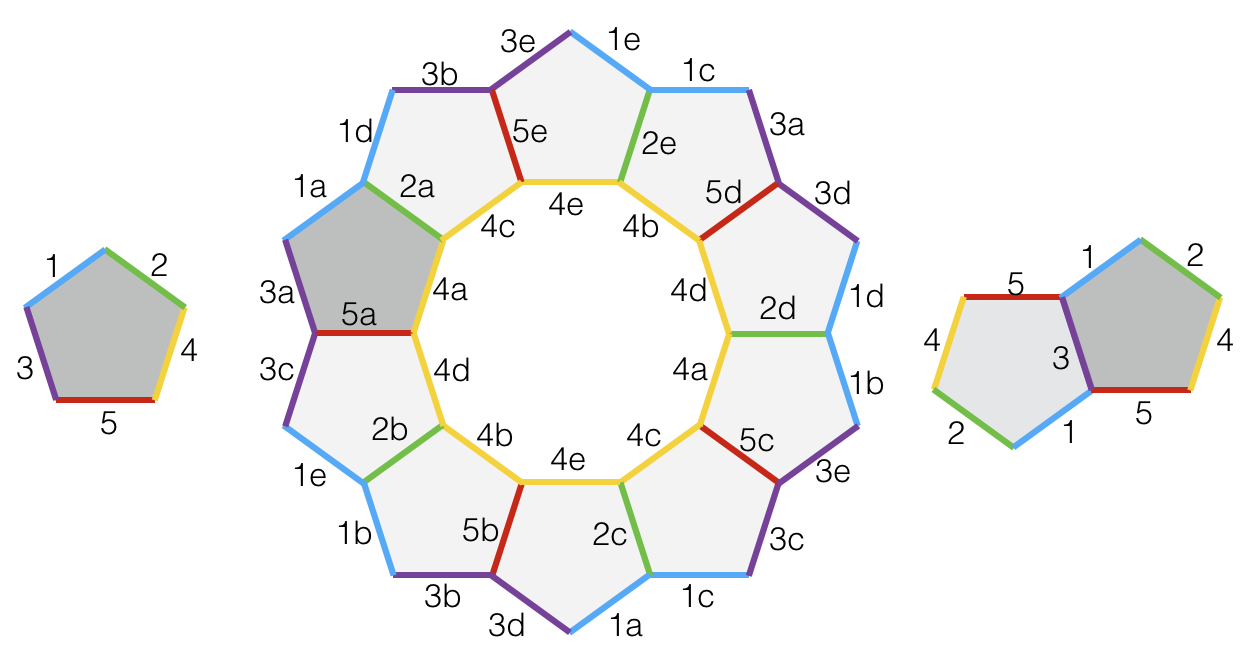}
  \caption{Left: the regular pentagon billiard table. Center: the unfolding of the pentagon billiard table into the necklace translation surface. The number 1-5 (along with the color) indicates which edge of the billiard table was unfolded to get that edge, and the letter indicates which of the copies are glued to each other. Right: The double pentagon translation surface.  \label{fig:necklace}}
\end{figure}

\begin{definition}\label{def:goldenL}
  The golden ratio $\phi = \frac {1+\sqrt{5}}2 \approx 1.618\ldots$ is the positive solution of $x^2=x+1$.  Its inverse $1/\phi = \phi - 1$ satisfies the equation $x^2=1-x$.

  The \emph{golden L} is an L-shaped translation surface with length-to-thickness ratio the golden ratio. It is built out of a $1\times 1$ square with $1\times 1/\phi$ and $1/\phi\times 1$ rectangles glued to it along their length-$1$ sides (Figure \ref{fig:golden-L-color}, and also Figures \ref{fig:cutandpaste} and \ref{fig:sector-vectors}).
  The \emph{double pentagon} is made of two regular pentagons, one of which is a reflected copy of the other, with opposite parallel edges identified.

  A \emph{cylinder} is a maximal parallel family of (primitive) periodic trajectories. The golden L and double pentagon have two cylinders in each periodic direction: a \emph{long cylinder} and a \emph{short cylinder}. In Figure \ref{fig:cutandpaste}, the long and short cylinders in the horizontal direction are shaded dark and light grey, respectively. The two cylinders have the same modulus (aspect ratio) and have length ratio $\phi$.

\end{definition}

%
%




We now turn to the double pentagon, and start by giving the affine equivalence with the golden L.

\begin{definition}\label{def:stretch}
  Let $P = \bm 1 {\cos(\pi/5)} 0 {\sin(\pi/5)}$, so $P^{-1} = \bm 1 {\cos(\pi/5)} 0 {\sin(\pi/5)}^{-1}$.
\end{definition}

\begin{lemma}
  The matrix $P$ takes the golden L surface to the double pentagon surface, and its inverse $P^{-1}$ takes the double pentagon to the golden L. In particular, they take long cylinder vectors to long cylinder vectors, and the same for short cylinder vectors.
\end{lemma}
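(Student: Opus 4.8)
The plan is to treat both the golden L and the double pentagon as polygons-with-translation-identifications and to exploit the fact that any invertible linear map acts on such data. First I would record the formal principle: if a translation surface is presented as a finite union of planar polygons whose edges are glued in parallel pairs by translations, then for any $M\in\mathrm{GL}_2(\mathbf R)$ the image polygons are again glued in parallel pairs by translations, since $M$ conjugates the translation $z\mapsto z+t$ to the translation $w\mapsto w+Mt$. Thus $M$ carries a translation surface to a translation surface and induces an affine homeomorphism between them; it sends straight-line trajectories to straight-line trajectories, and hence maximal families of parallel closed geodesics to the same, i.e. cylinders to cylinders. Applying this with $M=P$ reduces the lemma to two checks: that $P$ carries the golden L to the double pentagon as translation surfaces, and that it respects the long/short labelling of cylinders.

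For the first check I would track the golden L under $P$, writing it with corners $(0,0),(\phi,0),(\phi,1),(1,1),(1,\phi),(0,\phi)$ (using $1+\bar\phi=\phi$). Observe that $P$ fixes every horizontal vector, since $Pe_1=e_1$, and sends the vertical unit vector $e_2$ to $(\cos(\pi/5),\sin(\pi/5))$, the unit vector in the pentagon-edge direction $36^\circ$. Hence each horizontal edge stays horizontal and keeps its length, while each vertical edge of length $h$ becomes a segment of length $h$ pointing at $36^\circ$. The sheared L is then cut along edges that are glued in the golden L and the pieces are translated — a legal operation on a translation surface — to reassemble the two regular pentagons of the double pentagon; this is precisely the cut-and-paste recorded in Figure~\ref{fig:cutandpaste}. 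The golden-ratio proportions of the L are exactly what make the translated pieces close up into regular pentagons of unit edge (whose diagonals have length $\phi$), and one verifies directly that the edge identifications inherited from the L coincide with those of the double pentagon.

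For the second check I would argue through the behaviour of lengths in a fixed direction. In a periodic direction the surface decomposes into parallel cylinders, and $P$ sends this family to a family of parallel cylinders in the image direction. Being linear, $P$ scales the length of every segment in a given direction by the single factor $|Pu|$, where $u$ is the unit vector in that direction; in particular the ratio of the two core-curve lengths is unchanged. The long and short cylinders in a periodic direction have length ratio $\phi$ by Definition~\ref{def:goldenL}, so their images again have length ratio $\phi$, and $P$ necessarily takes the longer core to the longer core. Thus $P$ sends long cylinder vectors to long cylinder vectors and short to short, and applying the identical argument to the inverse affine map gives the corresponding statement for $P^{-1}$.

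The one genuinely non-formal step is the scissors-congruence verification in the second paragraph: one must exhibit the explicit cuts turning the sheared L into two pentagons and confirm, using $1+\bar\phi=\phi$ and $\phi\bar\phi=1$, that the resulting pieces have exactly the side lengths and gluings of the double pentagon. The remaining ingredients — that a linear map carries translation surfaces to translation surfaces, and that it preserves length ratios within a fixed direction — are formal and require no computation.
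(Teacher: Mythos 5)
Your proposal is correct and takes essentially the same route as the paper: both rest on the cut-and-paste of Figure~\ref{fig:cutandpaste} together with the column-by-column observation that $P$ fixes $[1,0]$ and sends $[0,1]$ to the unit vector at angle $\pi/5$. The one difference is that you explicitly justify the long/short cylinder claim via the fact that a linear map scales all lengths in a fixed direction by a common factor (so the ratio $\phi$ between the two core curves is preserved), a point the paper's proof leaves implicit; this is a worthwhile addition rather than a different approach.
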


\begin{proof}
  We can cut up the golden L on the left side of Figure \ref{fig:cutandpaste}, along the dark lines shown, and reassemble it by translation, respecting edge identifications, as shown with the dashed arrows, into the sheared version of the double pentagon outlined in the same picture. Then we can shear the double pentagon to the right, obtaining the double regular pentagon on the right side of Figure \ref{fig:cutandpaste}.

  We construct the shearing matrix that takes the golden L to the double pentagon, column by column. The vectors $[1,0]$ and $[0,1]$ are shown as thick arrows in the golden L on the left side of Figure \ref{fig:cutandpaste}. The unit horizontal edge $[1,0]$ stays as a unit horizontal. The vertical edge is mapped to a diagonal of the pentagon, also of unit length, and making an angle of $\pi/5$ with the positive horizontal.
\end{proof}

\begin{remark}
The \emph{Bouw-Möller surfaces} $S_{m,n}$ are an infinite family of Veech surfaces made by identifying edges of $m$ semi-regular $2n$-gons \cite{Hooper,DPU}. Here a \emph{semi-regular polygon} is an equiangular polygon whose edge lengths alternate between two values (possibly 0). Both surfaces described here are Bouw-Möller surfaces: the golden L is $S_{5,2}$ and the double pentagon is $S_{2,5}$.
\end{remark}

\begin{corollary}\label{cor:transform-periodicity}
  A direction $\vv$ is periodic on the double pentagon if and only if the direction $P^{-1}\vv$ is periodic on the golden L.
\end{corollary}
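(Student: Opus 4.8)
The plan is to deduce the corollary directly from the preceding Lemma, using the general principle that an affine equivalence of translation surfaces preserves periodicity of directions. First I would recall what periodicity of a direction means: a direction $\vecv$ is periodic on a translation surface exactly when the linear flow in direction $\vecv$ decomposes the surface into finitely many cylinders, equivalently when every regular trajectory in direction $\vecv$ is closed. The Lemma supplies an affine diffeomorphism from the golden L to the double pentagon whose linear (derivative) part is $P$, with inverse having linear part $P^{-1}$, so the whole argument reduces to tracking how these linear parts act on directions.

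The key step is that an affine map with linear part $P$ sends a trajectory in direction $\mathbf{w}$ on the golden L to a trajectory in direction $P\mathbf{w}$ on the double pentagon, and carries cylinders to cylinders. This is precisely the content of the ``in particular'' clause of the Lemma, which records that $P$ and $P^{-1}$ take long cylinder vectors to long cylinder vectors and short cylinder vectors to short cylinder vectors. I would therefore set $\mathbf{w} = P^{-1}\vecv$, so that $\vecv = P\mathbf{w}$, and observe that the golden L decomposes into cylinders in direction $P^{-1}\vecv$ if and only if the double pentagon decomposes into cylinders in direction $\vecv$. To finish, I would spell out both implications: if $\vecv$ is periodic on the double pentagon, its cylinder decomposition pulls back under $P^{-1}$ to a cylinder decomposition of the golden L in direction $P^{-1}\vecv$; conversely, pushing a cylinder decomposition of the golden L in direction $P^{-1}\vecv$ forward under $P$ produces one of the double pentagon in direction $\vecv$.

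I do not expect a genuine obstacle here, since the corollary is essentially immediate once the Lemma is in hand. The only point requiring care is the direction bookkeeping: because $P$ acts on holonomy vectors, periodicity of $\vecv$ on the double pentagon corresponds to periodicity of $P^{-1}\vecv$ (not $P\vecv$) on the golden L, and one must stay consistent about which surface each vector lives on. Keeping that convention straight is the entire subtlety of the proof.
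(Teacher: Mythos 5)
Your proof is correct, and it takes a mildly different route from the paper's. The paper's proof is a single sentence that transports \emph{saddle connections}: the saddle connection in direction $\vecv$ on the double pentagon is cut-and-pasted into a saddle connection in direction $P^{-1}\vecv$ on the golden L. That argument implicitly relies on the standing fact that, on these surfaces, a direction is periodic if and only if it contains a saddle connection --- the ``if'' half of which is not true on a general translation surface and is exactly what the Veech (lattice) property supplies. You instead transport the \emph{cylinder decomposition} itself: the affine equivalence with derivative $P$ carries cylinders in direction $P^{-1}\vecv$ on the golden L to cylinders in direction $\vecv$ on the double pentagon, and its inverse does the reverse. This makes your argument slightly more self-contained, since completely periodic directions are characterized directly by their cylinder decompositions and no dichotomy-type input is needed; the cost is that you must (and do) invoke the unstated but standard principle that an affine equivalence sends cylinders to cylinders, which is a modest strengthening of the ``in particular'' clause of the Lemma about cylinder \emph{vectors}. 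Your bookkeeping --- setting $\mathbf{w} = P^{-1}\vecv$ so that $\vecv = P\mathbf{w}$ and checking both implications --- matches the paper's conventions, and both proofs are essentially immediate from the Lemma.
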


\begin{proof}
  The saddle connection in direction $\vv$ on the double pentagon can be cut-and-pasted into a saddle connection in direction $P^{-1}\vv$ on the golden L.
\end{proof}

\begin{figure}[!ht]
  \centering
  \label{L-to-pent}
  \includegraphics[width=0.7\textwidth]{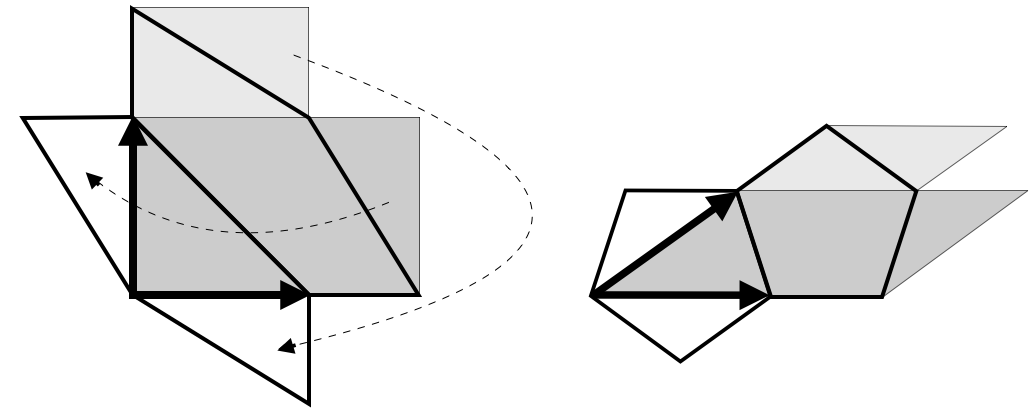}
  \caption{The cut-and-paste, shear, and vertical compression between the golden L and the double pentagon. For an earlier paper with a similar figure, see \cite[Figure 4]{curt}.  \label{fig:cutandpaste}}
\end{figure}

We are interested in linear flows on these surfaces, especially periodic trajectories. A linear flow is defined by a nonzero vector. By the $4$-fold rotational symmetry of the golden L, we may reduce our attention to flows whose direction $\theta$ satisfies $\theta\in[0,\pi/2)$. By the $10$-fold rotational symmetry of the double pentagon, we may reduce our attention to flows whose direction $\theta$ satisfies $\theta\in[0,\pi/5)$. The linear map $P$ between the golden L and the double pentagon takes these sectors to each other.

With this correspondence, we take advantage of the best of both worlds: we consider trajectories on both sides, thinking of their vectors on the golden L side (where vectors are easier), and thinking of their cutting sequences on the double pentagon side (where cutting sequences are nice).

\subsection{The tree of saddle connection vectors for the golden L}\label{sec:tree}

%
%
%
%
%
%

As stated in the introduction, we wish to study the directions of periodic trajectories on the golden L. Periodic directions on the golden L are all of those with slope in $\mathbf{Q}[\sqrt 5]$. This is quite special; for example, periodic directions on the double regular heptagon are a proper subset of the field that they generate  \cite{arnoux}.

Periodic trajectories come in cylinders (Definition \ref{def:goldenL}), and have a naturally associated vector, the \emph{holonomy vector} of this cylinder, obtained by developing the corresponding primitive closed geodesic in the plane; likewise for saddle connections. In each direction of a periodic trajectory on the golden L, there are two cylinders of periodic trajectories, one short and one long, with length ratio $\phi$ (shown in white and black in Figure \ref{fig:saddle-connections}). The horizontal and vertical directions are examples of this. Examples for pentagon billiard trajectories in each of the two cylinders are in Figure \ref{fig:buddies}.

\begin{figure}[!ht]
  \centering
  \includegraphics[width=0.3\textwidth]{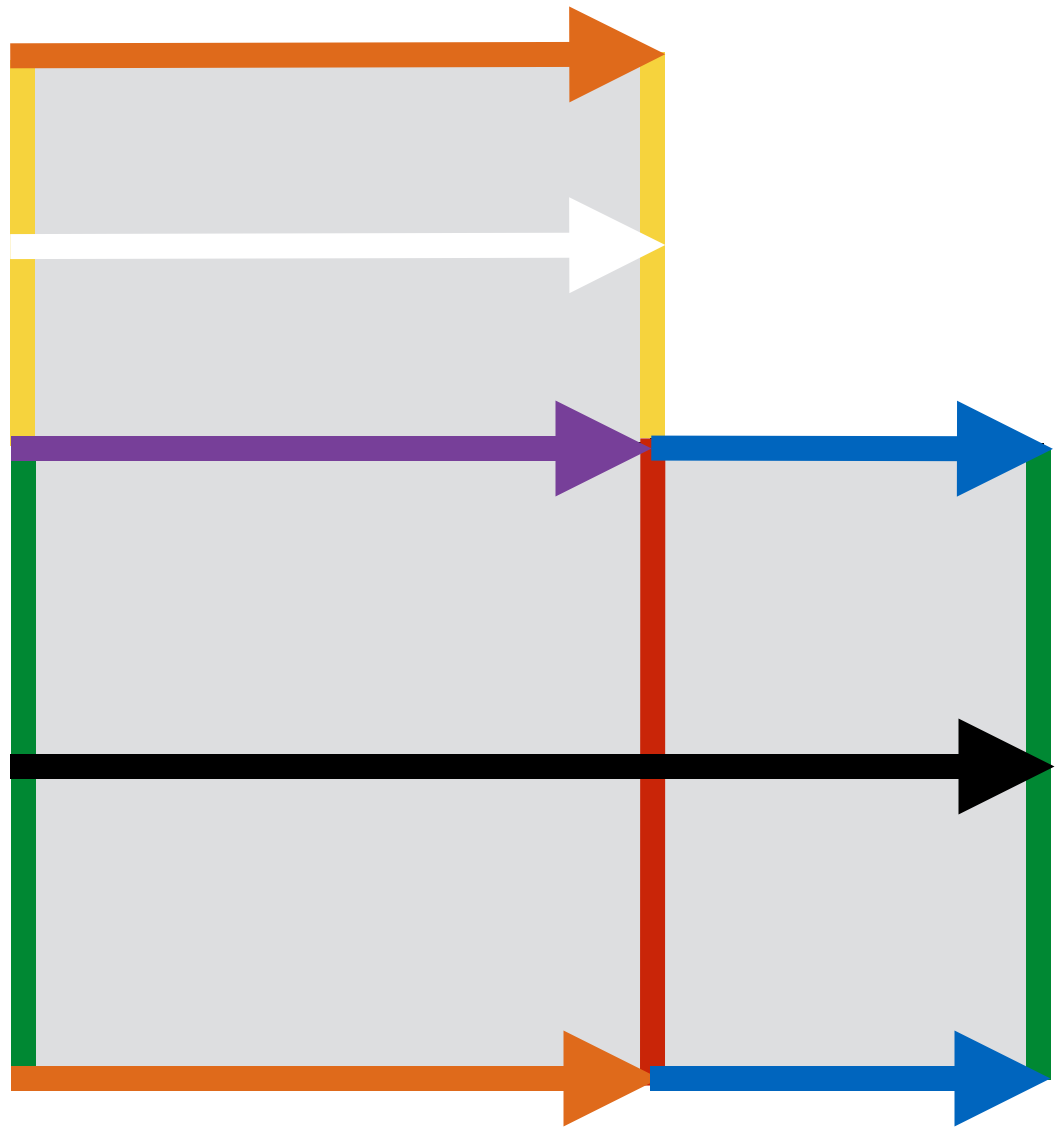}
  \caption{The golden L showing saddle connection and cylinder vectors in the horizontal direction: the short saddle connection vector (blue) and the two long saddle connection vectors (orange and purple), and the short cylinder vector (white) and long cylinder vector (black). Notice that the short cylinder vector coincides with the long saddle connection vector, and the long cylinder vector is the sum of the short and long saddle connection vectors.  \label{fig:saddle-connections}}
\end{figure}

\begin{figure}[!ht]
  \centering
  \includegraphics[width=0.23\textwidth]{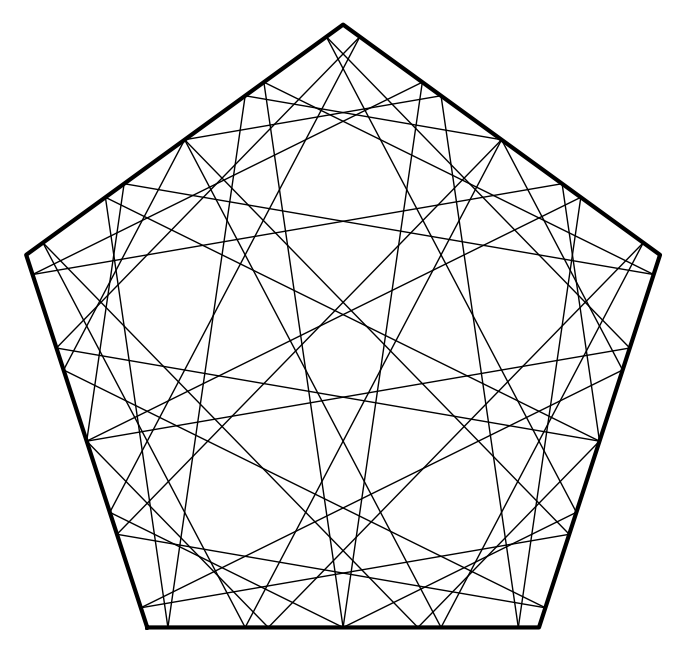}
  \includegraphics[width=0.23\textwidth]{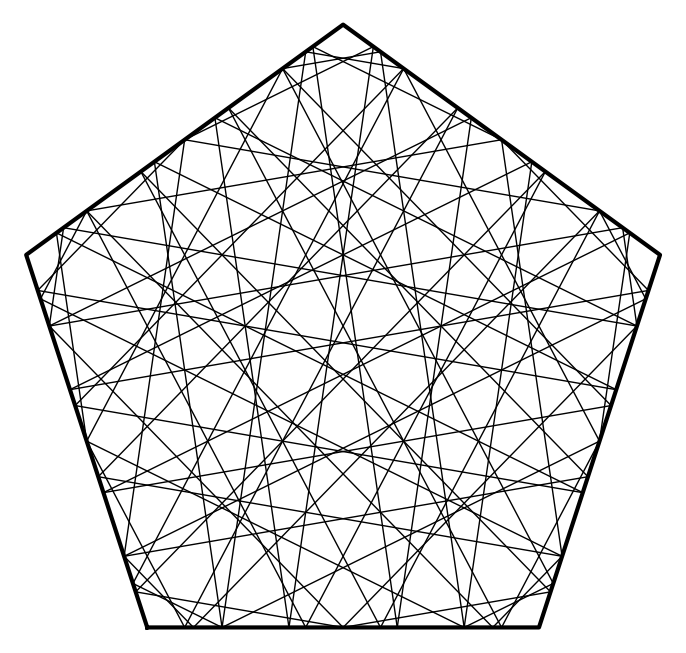} \ \ \ \ \ \ \ \
  \includegraphics[width=0.23\textwidth]{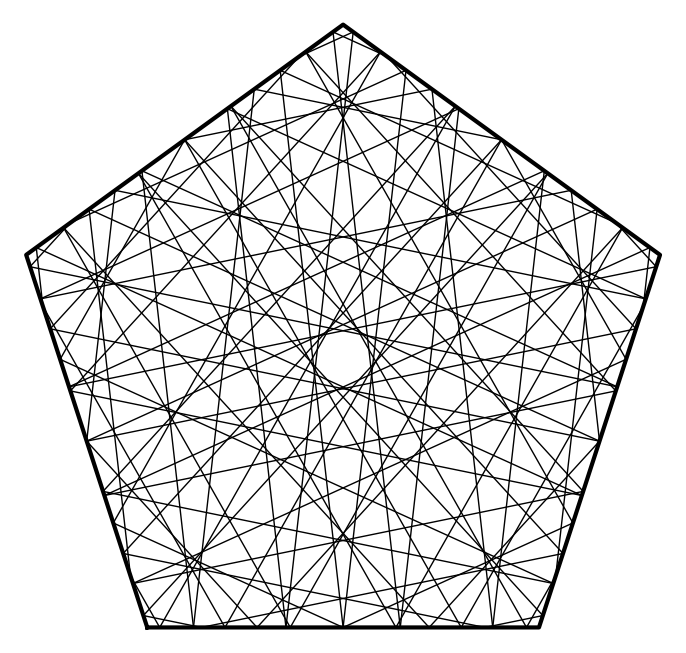}
  \includegraphics[width=0.23\textwidth]{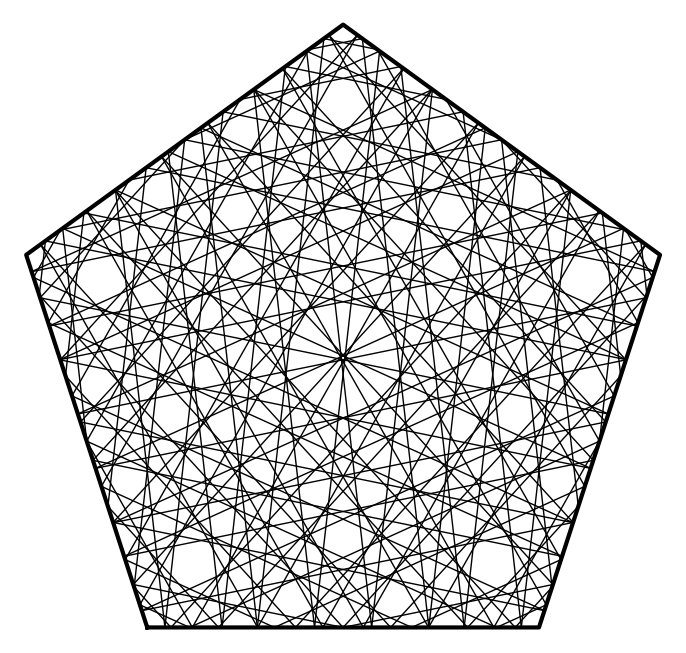}
  \caption{Two pairs of short and long billiard trajectories. For each pair, the trajectory is in the same direction. Notice that within each pair, the ``empty spots'' in one tend to correspond to ``dense spots'' in the other. This is because at an empty spot in the left pentagon, no core curve of a short cylinder crosses there, so the core curves of the long cylinder are more likely to cross that area in the right pentagon, and vice-versa. These have tree words 23 and 130, respectively.
      \label{fig:buddies}}
\end{figure}

In each periodic direction, there are also two lengths of saddle connection, with length ratio $\phi$. The long saddle connection vector coincides with the short cylinder vector, and the sum of the long and short saddle connection vectors coincides with the long cylinder vector (Figure \ref{fig:saddle-connections}). Again, the horizontal and vertical directions are examples of this.

Our approach to enumerating periodic directions is to take advantage of the affine self-similarities of the golden L (Veech's ``hidden symmetries''). We single out four such affine symmetries in the following definition.

\begin{definition}\label{def:sigmas}
  Let $\Sigma = \left\{\sv xy : x>0,y\geq 0\right\}$, the positive cone (first quadrant).
  For $i=0,1,2,3$, define $\Sigma_i$ and $\sigma_i$, as follows. 

  $\sigma_0 = \sm 1 \phi 0 1$, and $\Sigma_0 = \sigma_0 \Sigma = \left\{\sv xy : 0 \leq y < x/\phi\right\}$.

  $\sigma_1 = \sm \phi \phi 1 \phi$, and $\Sigma_1 = \sigma_1 \Sigma = \left\{ \sv xy : x/\phi \leq y < x \right\}$.

  $\sigma_2 = \sm \phi 1 \phi \phi$, and $\Sigma_2 = \sigma_2 \Sigma = \left\{\sv xy : x \leq y < \phi x\right\}$.

  $\sigma_3 = \sm 1 0 \phi 1$, and $\Sigma_3 = \sigma_3 \Sigma = \left\{\sv xy : \phi x \leq y \right\}$.
\end{definition}

\begin{lemma} We have the following facts about the $\sigma_i$ and $\Sigma_i$:
  \begin{enumerate}
    \item The first quadrant $\Sigma$ is partitioned into $\Sigma_0$, $\Sigma_1$, $\Sigma_2$, $\Sigma_3$.
    \item The matrices $\sigma_i$ generate the Veech group.
  \end{enumerate}
\end{lemma}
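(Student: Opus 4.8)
For part (1), the plan is to argue purely at the level of cones of slopes. I would first observe that each $\sigma_i$ lies in $SL_2(\R)$: using $\phi^2=\phi+1$, all four determinants equal $1$ (e.g. $\det\sigma_1=\phi^2-\phi=1$). An orientation-preserving linear map sends rays from the origin to rays and preserves their cyclic order, so $\Sigma_i=\sigma_i\Sigma$ is just the cone spanned by the images of the two boundary rays of $\Sigma$, namely the included ray through $\sv 10$ (slope $0$) and the excluded ray through $\sv 01$ (slope $\infty$). I would then compute these eight images: the boundary slopes come out as $0,\bar\phi$ for $\sigma_0$; $\bar\phi,1$ for $\sigma_1$; $1,\phi$ for $\sigma_2$; and $\phi,\infty$ for $\sigma_3$. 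Thus the $\Sigma_i$ are the cones of slopes $[0,\bar\phi)$, $[\bar\phi,1)$, $[1,\phi)$, $[\phi,\infty)$, the half-open endpoints matching up because each $\sigma_i$ carries the included bottom edge of $\Sigma$ to the included lower edge of $\Sigma_i$ and the excluded edge to the excluded upper edge. Since $0<\bar\phi<1<\phi$, these four intervals partition the full slope range $[0,\infty)$ of $\Sigma$, giving the partition and simultaneously confirming the explicit inequalities in Definition~\ref{def:sigmas}.

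For part (2), the plan is to compare $\langle\sigma_0,\sigma_1,\sigma_2,\sigma_3\rangle$ against a known generating set of the (orientation-preserving) Veech group $\Gamma$. I would take from the lattice property of the golden L \cite{veech, curt} that $\Gamma$ is the Hecke group $H_5$, i.e. the $(2,5,\infty)$ triangle group, generated by the order-$4$ rotation $R=\sm 0{-1}10$ — the derivative of the order-$4$ automorphism responsible for the $4$-fold symmetry used above — together with the parabolic $\sigma_0=\sm 1\phi 01$, which is the simultaneous Dehn twist in the horizontal cylinder decomposition (the long and short horizontal cylinders both have modulus $\phi$, so the common twist has shear value $\phi$). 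As a structural check, $R\sigma_0$ has trace $\phi=2\cos(\pi/5)$, hence is elliptic of order $5$ in $PSL_2$, matching the $(2,5,\infty)$ presentation $\Gamma=\langle\sigma_0,R\rangle$.

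It would then remain to prove $\langle\sigma_i\rangle=\langle\sigma_0,R\rangle$ by exhibiting explicit words in both directions, all verified using only $\phi^2=\phi+1$. For $\langle\sigma_i\rangle\supseteq\Gamma$, the identity $R=\sigma_0^{-1}\sigma_1\sigma_0^{-1}$ places $R$ (and hence $R^2=-I$) inside $\langle\sigma_i\rangle$, so $\langle\sigma_i\rangle\supseteq\langle\sigma_0,R\rangle=\Gamma$. For the reverse inclusion $\langle\sigma_i\rangle\subseteq\Gamma$, each generator lies in $\langle\sigma_0,R\rangle$ via $\sigma_1=\sigma_0R\sigma_0$, $\sigma_3=R^{-1}\sigma_0^{-1}R$, and $\sigma_2=\sigma_3R^{-1}\sigma_3$; combined with $\sigma_0$ this yields the inclusion, and equality follows. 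The matrix arithmetic here is routine; the one genuine input — and the main obstacle to a fully self-contained argument — is the identification of $\Gamma$ as \emph{exactly} $\langle\sigma_0,R\rangle$ with no further elements, which is precisely the lattice/triangle-group property of the golden L that I would cite from \cite{veech, curt} rather than reprove.
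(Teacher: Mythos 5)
Your proposal is correct and is essentially the paper's own argument: part (1) is the paper's ``clear by construction'' made explicit, and part (2) rests, exactly as in the paper, on writing the quarter-turn rotation as a word in the $\sigma_i$ (you use $R=\sigma_0^{-1}\sigma_1\sigma_0^{-1}$, which checks out; the paper uses $\rho=\sigma_0\sigma_3^{-1}\sigma_0\sigma_3^{-1}\sigma_0$) and then identifying $\langle\sigma_0,R\rangle$ with the $(2,5,\infty)$ triangle group. The only real divergence is how that identification finishes the proof: you cite the literature for $\Gamma=\langle\sigma_0,R\rangle$ outright, whereas the paper deduces it internally by noting that the $(2,5,\infty)$ triangle group is a maximal discrete subgroup of $SL_2(\R)$ and hence must coincide with the discrete Veech group containing it.
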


\begin{proof}
  The first claim is clear by construction. For the second, notice that $\sigma_0 \cdot \sigma_3^{-1} \cdot \sigma_0 \cdot \sigma_3^{-1} \cdot \sigma_0 = \sm 0{-1}10$, the matrix for a quarter-turn rotation, which we can call $\rho$. Together $\rho$ and $\sigma_0$ generate the Veech group for the golden L, the $(2,5,\infty)$ triangle group, a maximal discrete subgroup of $SL_2(\R)$, which therefore must be the Veech group of the golden L.
\end{proof}

\begin{figure}[!ht]
  \begin{center}
    \includegraphics[width=100pt]{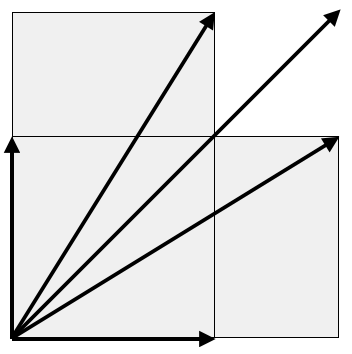}
    \caption{The golden L, and the vectors that are the column vectors for the matrices $\sigma_i$ in Definition \ref{def:sigmas}, which bound the associated sectors $\Sigma_i$ in the first quadrant. The slopes of the middle three dividing lines are $1/\phi$, $1$, and $\phi$. \label{fig:sector-vectors}}
  \end{center}
\end{figure}

We will use these matrices to generate all of the saddle connection vectors.

\begin{remark}
  The matrices $\sigma_0$ and $\sigma_3$ alone generate the Veech group, but we choose to use all four $\sigma_i$ because they give a partition of the first quadrant into four sectors.
\end{remark}

\begin{definition}
  Let $\Lambda$ (respectively $\Lambda_i$) be the intersection of the set of \emph{short cylinder vectors} (equivalently, the set of long saddle connection vectors) with $\Sigma$ (respectively $\Sigma_i$).
\end{definition}

\begin{lemma}
  The $\Lambda_i$s form a partition of $\Lambda$, and $\Lambda_i = \sigma_i \Lambda$.
\end{lemma}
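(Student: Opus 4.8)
The partition statement is immediate: by part~(1) of the preceding lemma the sectors $\Sigma_0,\Sigma_1,\Sigma_2,\Sigma_3$ partition $\Sigma$, and since $\Lambda_i=\Lambda\cap\Sigma_i$ by definition, intersecting this partition with $\Lambda$ yields $\Lambda=\bigsqcup_i\Lambda_i$ (with some parts possibly empty). So essentially all of the work lies in proving $\Lambda_i=\sigma_i\Lambda$.

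The plan is to exploit that each $\sigma_i$ is the derivative of an affine automorphism of the golden L (it lies in the Veech group by part~(2) of the lemma), and that this automorphism respects the short/long dichotomy of cylinders. The key observation is that every $\sigma_i$ has determinant $1$: indeed $\det\sigma_0=\det\sigma_3=1$ visibly, while $\det\sigma_1=\det\sigma_2=\phi^2-\phi=1$ using $\phi^2=\phi+1$. Hence the corresponding affine automorphism is area-preserving. In a given periodic direction the short and long cylinders have equal modulus and length ratio $\phi$ (Definition~\ref{def:goldenL}); equal modulus together with length ratio $\phi$ forces their heights to be in ratio $\phi$ as well, so their areas are in ratio $\phi^2\neq 1$. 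An area-preserving automorphism cannot interchange two cylinders of different area, so it must send short cylinders to short cylinders (and long to long). Since the holonomy vector of the image cylinder is the $\sigma_i$-image of the holonomy vector of the original cylinder, multiplication by $\sigma_i$ carries the set of short cylinder vectors into itself.

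With this in hand, both inclusions are short. For $\sigma_i\Lambda\subseteq\Lambda_i$: if $v\in\Lambda$ is a short cylinder vector lying in $\Sigma$, then $\sigma_iv$ is again a short cylinder vector by the previous paragraph, and $\sigma_iv\in\sigma_i\Sigma=\Sigma_i$; hence $\sigma_iv\in\Lambda\cap\Sigma_i=\Lambda_i$. For the reverse inclusion $\Lambda_i\subseteq\sigma_i\Lambda$: given $v\in\Lambda_i=\Lambda\cap\Sigma_i$, note that $\sigma_i^{-1}$ also lies in the Veech group and is area-preserving, so by the same argument $\sigma_i^{-1}v$ is a short cylinder vector; moreover $\sigma_i^{-1}v\in\sigma_i^{-1}\Sigma_i=\Sigma$, so $\sigma_i^{-1}v\in\Lambda$ and therefore $v\in\sigma_i\Lambda$. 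Combining the two inclusions gives $\Lambda_i=\sigma_i\Lambda$.

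The only genuinely nontrivial point $-$ the main obstacle $-$ is justifying that the automorphisms with derivative $\sigma_i$ preserve the short/long labeling rather than swapping it; everything else is bookkeeping with the sector partition. I would make sure the area computation (the ratio $\phi^2$) is airtight, since it is exactly what rules out a swap, and it relies only on the ``equal modulus, length ratio $\phi$'' structure recorded in Definition~\ref{def:goldenL}.
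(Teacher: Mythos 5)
Your proof is correct, and its skeleton is exactly the paper's: the partition claim is immediate from the sector decomposition, and the identity $\Lambda_i=\sigma_i\Lambda$ is proved by the two inclusions, pushing forward with $\sigma_i$ (using $\sigma_i\Sigma=\Sigma_i$) and pulling back with $\sigma_i^{-1}$. The genuine difference is at the one nontrivial point. The paper simply asserts that ``long saddle connection vectors go to long saddle connection vectors'' under the $\sigma_i$ and offers no justification, whereas you supply one: each $\sigma_i$ has determinant $1$ (using $\phi^2=\phi+1$ for $\sigma_1,\sigma_2$), so the corresponding affine automorphism preserves area; by Definition~\ref{def:goldenL} the two cylinders in any periodic direction have equal modulus and length ratio $\phi$, hence area ratio $\phi^2\neq 1$; and since the ratio and the total surface area are the same in every periodic direction, the short cylinder has the same (strictly smaller) area in every direction, so an area-preserving automorphism must send short cylinders to short cylinders. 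This makes the lemma self-contained, which is what your approach buys: you prove the step the paper takes for granted, and that step is arguably the only real content here. One small point to make explicit rather than implicit: your phrase ``cannot interchange two cylinders of different area'' tacitly uses that the short-cylinder area is the same in the source direction and in the image direction; this follows in one line from the fixed ratio $\phi^2$ and the fixed total area, and is worth writing out so the contradiction ($A_{\mathrm{short}}=\phi^2 A_{\mathrm{short}}$) is visibly airtight.
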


\begin{proof}
  The first statement is clear by construction.

  We have $\sigma_i \Lambda \subseteq \Lambda \cap \Sigma_i$ because $\Lambda \subseteq \Sigma$, so the images satisfy $\sigma_i \Lambda \subseteq \sigma_i \Sigma = \Sigma_i$. Since long saddle connection vectors go to long saddle connection vectors, for any $v\in\Lambda$, $\sigma_i v$ is a long saddle connection vector that lies in $\Sigma$, so it is also in $\Lambda$.

  For the reverse inclusion $\sigma_i \Lambda \supseteq \Lambda \cap \Sigma_i$, let $v\in \Lambda \cap \Sigma_i$. Then $\sigma_i^{-1} v$ is also a long saddle connection vector, and it is in $\Sigma$, so it is in $\Lambda$, and so $v = \sigma_i(\sigma_i^{-1})v \in \Lambda_i$.
\end{proof}

\begin{definition}
  Let $A = \{\sigma_0,\sigma_1,\sigma_2,\sigma_3\}$, and let $A^*$ be the set of all finite words on the alphabet $A$, including the empty word. For an element $\sigma = \sigma_{k_1}, \sigma_{k_2}, \ldots, \sigma_{k_n}$ of $A^*$, let $m(\sigma) = \sigma_{k_n}\cdots\sigma_{k_2}\sigma_{k_1}$ be the product of the matrices comprising $\sigma$, in order of application.
\end{definition}

\begin{theorem}[Tree Theorem]\label{thm:saddles}
  The set of all vectors $m(\sigma)\cdot \sv 10$, for $\sigma\in A^*$, gives the entire set $\Lambda$.
  Furthermore, there is a bijection between $\Lambda$ and the set of words not starting with $\sigma_0$.
\end{theorem}

\begin{proof}
  Notice that, for vectors in $\Sigma$, all of the $\sigma_i$s send them to longer vectors (except $\sv 10$, which is sent to itself). Thus the $\sigma_i^{-1}$s send vectors in $\Lambda_i$ to shorter vectors in $\Lambda$ (except $\sv 10$, which is sent to itself). Since $\Lambda$ is discrete, the image eventually lands on the shortest vector, $\sv 10$. Working backwards, starting with $\sv 10$ and applying the $\sigma_i$s, gives the result. Since $\sigma_0 \sv 10 = \sv 10$, and this is the only time when $\sigma_i \vv = \vv$ for $\vv\in\Lambda$, we omit words starting with $\sigma_0$ and get the desired bijection.
\end{proof}

Theorem~\ref{thm:saddles} gives a way of generating all saddle connection vectors on the golden L (Figure \ref{fig:saddle_connections}). For a different presentation of the same construction, see \cite[\S2.2 and Theorem 4]{DFT}.

\begin{figure}[!ht]
  \begin{center}
    \includegraphics[width=0.48\textwidth]{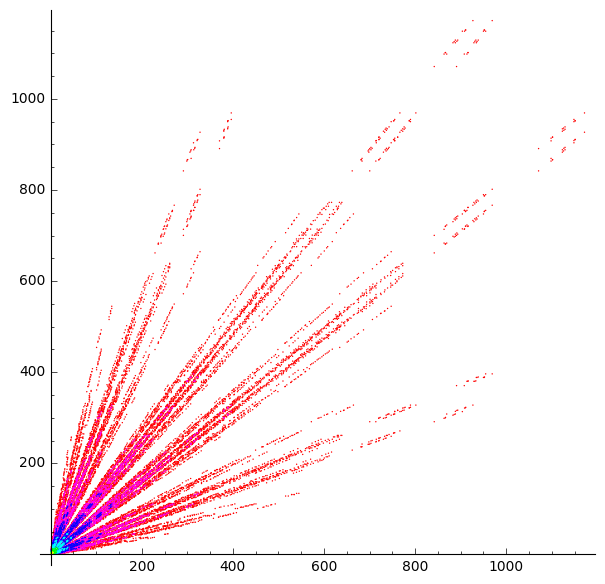} \ \
    \includegraphics[width=0.48\textwidth]{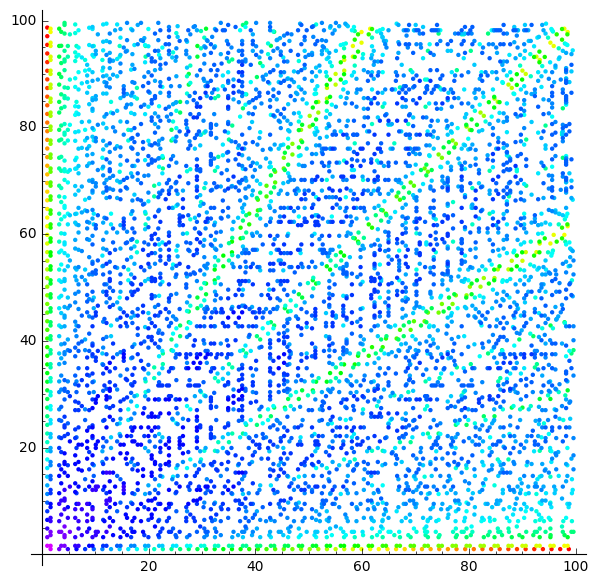} \ \
    \caption{(a) Golden L long saddle connection vectors from the first six levels of the tree (3075 points). (b) All of the long saddle connection vectors in $[0,100]^2$, which requires 61 levels of the tree; see Corollary~\ref{cor:depth_needed} (6007 points). Points are color coded by level; the color scale is different in the two pictures. \label{fig:saddle_connections}}
  \end{center}
\end{figure}

\newpage
\begin{proposition}
  At depth $k\geq 1$ in the tree, the shortest vectors are $\sv {k\phi}1$ and $\sv 1{k\phi}$.
\end{proposition}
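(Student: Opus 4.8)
The plan is to prove a statement strictly stronger than minimality of length, one that meshes with the multiplicative structure of the tree: a \emph{coordinatewise domination} claim. Write $u \preceq u'$ for vectors in the first quadrant when $u'-u$ has both entries nonnegative. The two structural facts I would exploit are (i) each $\sigma_i$ has nonnegative entries, so $u \preceq u'$ implies $\sigma_i u \preceq \sigma_i u'$, and (ii) since every vector of $\Lambda$ lies in the closed first quadrant, $u \preceq u'$ implies $\|u\|^2 \le \|u'\|^2$ where $\|\sv xy\|^2 = x^2+y^2$. Hence, \emph{if} I can show that every depth-$k$ vector dominates one of the two candidates $\sv{k\phi}{1}$ or $\sv{1}{k\phi}$, the length bound follows at once, since both candidates have the same squared length $k^2\phi^2+1$. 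I would record at the start that both candidates genuinely occur at depth $k$: the length-$k$ word $\sigma_1\sigma_0\cdots\sigma_0$ (which does not begin with $\sigma_0$) yields $\sigma_0^{k-1}\sigma_1\sv 10 = \sv{k\phi}{1}$, and $\sv{1}{k\phi}$ is its image under the diagonal reflection $\sm 0110$, which conjugates $\sigma_0\leftrightarrow\sigma_3$ and $\sigma_1\leftrightarrow\sigma_2$ and so is a symmetry of the tree exchanging the two candidate families.

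The heart of the argument is an induction on $k$ for the claim: every depth-$k$ vector $w$ satisfies $w \succeq \sv{k\phi}{1}$ or $w \succeq \sv{1}{k\phi}$. The base case $k=1$ is the three-vector check $\sv{\phi}{1},\,\sv{\phi}{\phi},\,\sv{1}{\phi}$, each of which dominates $\sv{\phi}{1}$ or $\sv{1}{\phi}$. For the inductive step $(k\ge 2)$, any depth-$k$ vector is $\sigma_j v$ for a depth-$(k-1)$ vector $v$ and \emph{arbitrary} $j\in\{0,1,2,3\}$, since only the first letter of a tree word is constrained. Writing $p := \sv{(k-1)\phi}{1}$ and $\bar p := \sv{1}{(k-1)\phi}$, the inductive hypothesis gives $v \succeq p$ or $v \succeq \bar p$, and then $\sigma_j v \succeq \sigma_j p$ (resp.\ $\sigma_j\bar p$) by nonnegativity; so it suffices to check that each $\sigma_j p$ dominates a candidate (the $\sigma_j\bar p$ case then follows by the reflection symmetry). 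This is four short computations using $\phi^2=\phi+1$, i.e.\ $\phi(\phi-1)=1$: $\sigma_0 p = \sv{k\phi}{1}$ and $\sigma_1 p = \sv{(k-1)\phi^2+\phi}{k\phi}$ are both $\succeq \sv{k\phi}{1}$ (the first entry of $\sigma_1 p$ exceeds $k\phi$ by exactly $k-1$), while $\sigma_2 p = \sv{(k-1)\phi^2+1}{(k-1)\phi^2+\phi}$ and $\sigma_3 p = \sv{(k-1)\phi}{(k-1)\phi^2+1}$ are both $\succeq \sv{1}{k\phi}$. The only non-obvious inequalities sit in $\sigma_3 p$, where the first entry needs $(k-1)\phi\ge 1$ and the second needs $(k-1)\phi^2+1 - k\phi = k-\phi \ge 0$; both hold precisely because $k\ge 2$, which is exactly why the step is separated from the base case.

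I expect the main conceptual obstacle to be realizing that a \emph{naive} induction on the minimal length does not close, because the $\sigma_i$ do not act monotonically on lengths---a shear can shorten a vector in some directions---so one cannot bound $\|\sigma_j v\|$ below by $\|\sigma_0 p\|$ from $\|v\|\ge\|p\|$ alone. The fix is to carry the order-theoretic invariant $\preceq$, which the nonnegative matrices $\sigma_i$ respect exactly; with the domination claim established, the proposition is immediate. The remaining steps are routine: verifying nonnegativity of the $\sigma_i$ and the two implications in (i)--(ii), confirming that both candidates lie at depth $k$, and checking that $\sm 0110$ is a tree automorphism swapping the candidate families (so the $v\succeq\bar p$ case requires no separate computation).
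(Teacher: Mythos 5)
Your proof is correct, but it takes a genuinely different route from the paper's. The paper argues by direct Euclidean length comparisons: its claim (1) says $|\sigma_0 \vecv| < |\sigma_1 \vecv|$ and $|\sigma_3 \vecv| < |\sigma_2 \vecv|$ for every $\vecv$ in the positive cone, so a minimizer should only use $\sigma_0$ and $\sigma_3$ after the first letter; its claim (2) compares $|\sigma_0 \sv ab|$ with $|\sigma_3 \sv ab|$ (which is smaller depends on whether $a>b$), from which it concludes that the pure words $\sigma_0^{k-1}\sigma_1$ and $\sigma_3^{k}$ beat mixed words, landing on $\sv {k\phi}{1}$ and $\sv{1}{k\phi}$. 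That is a greedy/exchange argument on lengths, and --- exactly as you observe --- a length inequality for a single application does not automatically survive further applications of the $\sigma_i$, since shears are not monotone on Euclidean length; the paper leaves this propagation implicit. Your replacement invariant, the coordinatewise order $\preceq$, is preserved by the nonnegative matrices $\sigma_i$, which is what makes your induction close cleanly: every depth-$k$ vector dominates one of the two candidates, both candidates are realized at depth $k$, and domination in the first quadrant implies the norm bound. Your approach also yields, essentially for free, that the minimizers are \emph{unique}: if $w \succeq \sv{k\phi}{1}$ and $\|w\| = \|\sv{k\phi}{1}\|$, then $w = \sv{k\phi}{1}$, since all the correction terms in $\|w\|^2 - \|\sv{k\phi}{1}\|^2$ are nonnegative. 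What the paper's argument buys is brevity and the directional information in its claim (2) (which candidate is closer depends on which side of the diagonal the vector sits); what yours buys is a fully rigorous inductive step and a slightly stronger conclusion. One small imprecision: your justification that $\sv{1}{k\phi}$ occurs at depth $k$ via conjugation by $\sm 0110$ is loose, because $\sv 10$ is not fixed by that reflection --- the paper's tree symmetry (Theorem \ref{thm:tree_symm}) swaps $1\leftrightarrow 3$ on the \emph{first} letter rather than conjugating it --- so it is cleaner to note directly that $\sigma_3^{k}\sv 10 = \sv{1}{k\phi}$, a valid tree word since it does not begin with $\sigma_0$.
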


\begin{proof} Beginning with $\sv 10$, applying the matrices $\sigma_1, \sigma_2, \sigma_3$ yields the vectors $\sv \phi 1$, $\sv 1\phi$, $\sv \phi\phi$, respectively. The latter is longer than the first two, so we discard it.
  Then the result follows from the following claims for any $\vv = \sv ab$ in the positive cone:
  \begin{enumerate}
    \item $|\sigma_0 \vv| < |\sigma_1 \vv|$ and $|\sigma_3 \vv| < |\sigma_2 \vv|$.
    \item $|\sigma_0 \sv ab| < |\sigma_3 \sv ab|$ if $a>b$ and $|\sigma_0 \sv ab| > |\sigma_3 \sv ab|$ if $a<b$.
  \end{enumerate}
  Both of these follow from multiplying out the matrices and considering the lengths of the resulting vectors. By (1), the shortest vector will always be a product of $\sigma_0$ and $\sigma_3$ applied to $\vv$. By (2), applying ${\sigma_0}^k$ or ${\sigma_3}^k$ results in a shorter vector than a product including both matrices, and which one of these is shorter depends on the location of the vector.
\end{proof}

\begin{corollary}\label{cor:depth_needed}
  To obtain all of the vectors in the square $[0,N]^2$, it is necessary to go to depth $N/\phi$ in the tree.
\end{corollary}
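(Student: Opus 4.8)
The plan is to read the corollary off from the Proposition together with a monotonicity fact about how the larger coordinate grows down the tree. Write $\max(\vecv)=\max(x,y)$ for $\vecv=\sv{x}{y}$. Since $\vecv$ lies in the square $[0,N]^2$ exactly when $\max(\vecv)\le N$, the whole question reduces to understanding the smallest value of $\max(\vecv)$ that occurs at each depth of the tree. The Proposition identifies the Euclidean-shortest vectors at depth $k$ as $\sv{k\phi}{1}$ and $\sv{1}{k\phi}$, and these have $\max = k\phi$; the two tasks are to show (a) such a vector of $\Lambda\cap[0,N]^2$ persists down to depth $\lfloor N/\phi\rfloor$ (necessity), and (b) nothing at greater depth can re-enter the square (sufficiency).

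For the lower bound I would argue directly from the Proposition. The vector $\sv{k\phi}{1}$ is produced by $\sigma_1$ followed by $k-1$ applications of $\sigma_0$, i.e. it corresponds to the reduced word $\sigma_1\sigma_0^{k-1}$; by the bijection of the Tree Theorem~\ref{thm:saddles} it appears at depth $k$ and at no other depth. Since $\max\sv{k\phi}{1}=k\phi$, this vector lies in $[0,N]^2$ precisely when $k\phi\le N$, that is $k\le N/\phi$. Taking $k=\lfloor N/\phi\rfloor$ exhibits an element of $\Lambda\cap[0,N]^2$ that is generated only at depth $\lfloor N/\phi\rfloor$, so the tree must be expanded at least that far.

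For the upper bound I would prove by induction on $k\ge 1$ that every depth-$k$ vector satisfies $\max(\vecv)\ge k\phi$. First I would note that from depth $1$ onward both coordinates are $\ge 1$: the depth-$1$ vectors $\sv{\phi}{1}$, $\sv{\phi}{\phi}$, $\sv{1}{\phi}$ have entries $\ge 1$, and each $\sigma_i$ sends $\sv{x}{y}$ with $x,y\ge 0$ to a vector each of whose coordinates dominates the corresponding input coordinate, so the property $x,y\ge 1$ is preserved. The inductive step is the key lemma: for $\vecv=\sv{x}{y}$ with $x,y\ge 1$ and any $i$, one has $\max(\sigma_i\vecv)\ge\max(\vecv)+\phi$. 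This is a short case check. For $\sigma_0$ we have $\sigma_0\vecv=\sv{x+\phi y}{y}$ with $\max=x+\phi y$; if $x\ge y$ then $x+\phi y\ge x+\phi=\max(\vecv)+\phi$ using $y\ge 1$, while if $y>x$ the needed inequality $x+\phi y\ge y+\phi$ reduces to $x+\bar\phi y\ge\phi$, which holds since $x+\bar\phi y\ge 1+\bar\phi=\phi$. The generator $\sigma_3$ is symmetric, and $\sigma_1,\sigma_2$ give the even larger increment $2\phi-1$ by the same computation. Chaining the lemma from the base case $\max\ge\phi$ at depth $1$ yields $\max(\vecv)\ge k\phi$ at every depth $k$.

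Putting the two bounds together finishes the proof: for $k>N/\phi$ every depth-$k$ vector has $\max(\vecv)\ge k\phi>N$ and so leaves the square, whereas depth $\lfloor N/\phi\rfloor$ still contains the boundary vector $\sv{\lfloor N/\phi\rfloor\phi}{1}$, so depth $\lfloor N/\phi\rfloor\approx N/\phi$ is exactly what is needed. The one genuinely substantive step is the inductive max-coordinate lemma of the third paragraph, and I expect it to be the main obstacle precisely because the Proposition controls only the \emph{Euclidean}-shortest vectors at each depth: a near-diagonal vector can be Euclidean-longer yet have a strictly smaller value of $\max(\vecv)$, so I cannot quote the Proposition verbatim for the square $[0,N]^2$ and must instead track $\max(\vecv)$ through all four generators by hand.
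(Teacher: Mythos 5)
Your proof is correct, and it both contains and strengthens the paper's argument. The paper states Corollary~\ref{cor:depth_needed} without an explicit proof, as an immediate consequence of the preceding Proposition, and that implicit deduction is exactly your second paragraph: the vector $\sv{k\phi}{1}=\sigma_0^{k-1}\sigma_1\sv 10$ (tree word $1\,0^{k-1}$) occurs at depth $k$ and at no other depth, and lies in $[0,N]^2$ precisely when $k\phi\le N$, which gives the necessity claim that the corollary literally asserts. Your third paragraph goes genuinely beyond the paper: the induction showing that every depth-$k$ vector satisfies $\max(\vecv)\ge k\phi$, via the increment lemma $\max(\sigma_i\vecv)\ge\max(\vecv)+\phi$ once both coordinates are $\ge 1$, establishes the converse statement that no vector at depth greater than $N/\phi$ can lie in the square, so depth $\lfloor N/\phi\rfloor$ is not only necessary but sufficient. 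The paper implicitly relies on this sufficiency in Figure~\ref{fig:saddle_connections}(b) (``all of the long saddle connection vectors in $[0,100]^2$, which requires 61 levels'') but never justifies it, and, as you correctly observe, it does not follow verbatim from the Proposition: that result controls the Euclidean-shortest vectors at each depth, and a near-diagonal vector of larger Euclidean norm could a priori have smaller sup-norm and re-enter the square. Your max-coordinate lemma is exactly what closes that gap, and its computations check out: $1+\bar\phi=\phi$ handles $\sigma_0$ and (by transpose symmetry) $\sigma_3$, the increment $2\phi-1=\sqrt5>\phi$ handles $\sigma_1$ and $\sigma_2$, and the coordinate-domination observation shows the hypothesis $x,y\ge 1$ propagates from depth $1$ onward, so the induction is sound.
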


For example, in Figure~\ref{fig:saddle_connections}, depth $61 =\lfloor 100/\phi\rfloor$ is required to get everything in $[0,100]^2$.

\begin{definition}\label{def:tree}
  We refer to the \emph{tree of periodic directions} as the tree given by the construction in Theorem \ref{thm:saddles}: The base node of the tree is vector $\sv 10$, the first level is given by $\sigma_i \sv 10$ for $i\in\{1,2,3\}$, and the $(n+1)^\text{st}$ level for $n\geq 1$ is given by $\sigma_{j_n}\cdots\sigma_{j_1} \sigma_i \sv 10$ for $i\in\{1,2,3\}, j_k\in\{0,1,2,3\}$. The \emph{tree word} corresponding to a periodic direction is given by the finite sequence of subscripts of the transformations $\sigma_i$ in the order of application, e.g. tree word 120 for the direction $\sigma_0 \sigma_2 \sigma_1 \sv 10$.
\end{definition}

\begin{figure}[!ht]
  \begin{center}
    \begin{tikzpicture}[
        scale = 0.98, transform shape, thick,
        every node/.style = {},
        grow = down,  
        level 1/.style = {sibling distance=5cm},
        level 2/.style = {sibling distance=1.3cm},
        level 3/.style = {sibling distance=1cm},
        level 4/.style = {sibling distance=1cm},
        level distance = 3cm
      ]
      \node  (Start) { \includegraphics[width=0.08\textwidth]{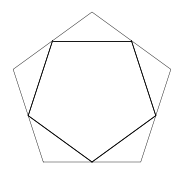}}
      child {   node (A) {\includegraphics[width=0.08\textwidth]{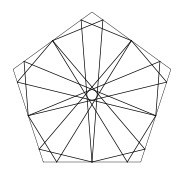}}
          child { node (AB) {\includegraphics[width=0.08\textwidth]{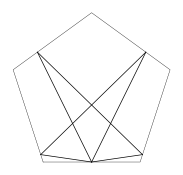}}
            }
          child { node (AC) {\includegraphics[width=0.08\textwidth]{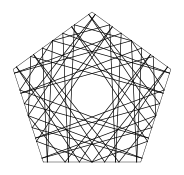}}}
          child { node (AD) {\includegraphics[width=0.08\textwidth]{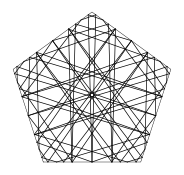}}}
          child { node (AE) {\includegraphics[width=0.08\textwidth]{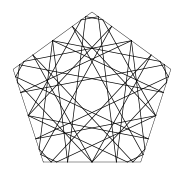}}}
        }
      child {   node  (D) {\includegraphics[width=0.08\textwidth]{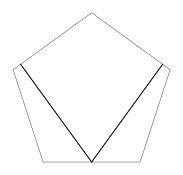}}
          child { node  (DB) {\includegraphics[width=0.08\textwidth]{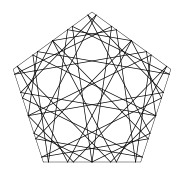}}}
          child { node  (DC) {\includegraphics[width=0.08\textwidth]{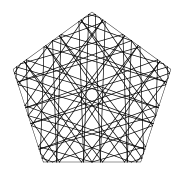}}}
          child { node  (DE) {\includegraphics[width=0.08\textwidth]{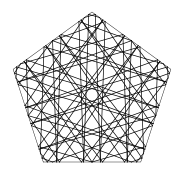}}}
          child { node  (DF) {\includegraphics[width=0.08\textwidth]{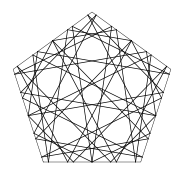}}}
        }
      child {   node  (G) {\includegraphics[width=0.08\textwidth]{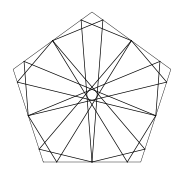}}
          child { node  (GB) {\includegraphics[width=0.08\textwidth]{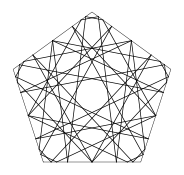}}}
          child { node  (GC) {\includegraphics[width=0.08\textwidth]{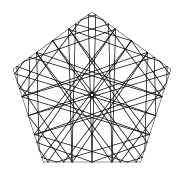}}}
          child { node  (GD) {\includegraphics[width=0.08\textwidth]{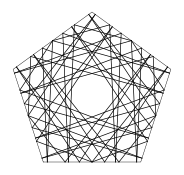}}}
          child { node  (GE) {\includegraphics[width=0.08\textwidth]{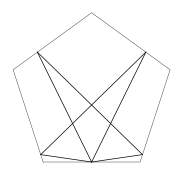}}}
        }  ;

      \begin{scope}[nodes = {draw = none}]
        \path (Start) -- (A) node [near start, right]  {$1$};
        \path (Start) -- (D) node [near start, left] {$2$};
        \path (Start) -- (G) node [near start, left] {$3$};

        \path (A)     -- (AB) node [near end, right]  {$0$};
        \path (A)     -- (AC) node [near end, right] {$1$};
        \path (A)     -- (AD) node [near end, left]  {$2$};
        \path (A)     -- (AE) node [near end, left] {$3$};

        \path (D)     -- (DB) node [near end, right]  {$0$};
        \path (D)     -- (DC) node [near end, right] {$1$};
        \path (D)     -- (DE) node [near end, left]  {$2$};
        \path (D)     -- (DF) node [near end, left] {$3$};

        \path (G)     -- (GB) node [near end, right] {$0$};
        \path (G)     -- (GC) node [near end, right] {$1$};
        \path (G)     -- (GD) node [near end, left] {$2$};
        \path (G)     -- (GE) node [near end, left] {$3$};

        \begin{scope}[nodes = {below = 15pt}]
          \node at (AB) {$10$};
          \node at (AC) {$11$};
          \node at (AD) {$12$};
          \node at (AE) {$13$};
          \node at (DB) {$20$};
          \node at (DC) {$21$};
          \node at (DE) {$22$};
          \node at (DF) {$23$};
          \node at (GB) {$30$};
          \node at (GC) {$31$};
          \node at (GD) {$32$};
          \node at (GE) {$33$};
        \end{scope}
      \end{scope}
    \end{tikzpicture}
    \caption{The first two levels in the tree of periodic directions on the regular pentagon. The short trajectory corresponding to the core curve of the cylinder is shown for each. \label{fig:tree}}
  \end{center}
\end{figure}

The short pentagon billiard trajectory for each direction in the tree is in Figure~\ref{fig:tree}. The picture hints that the tree is left-right symmetric:

\begin{theorem}\label{thm:tree_symm}
  The tree is symmetric:  to get an identical trajectory, for the first digit of the tree word we do the swap $1\leftrightarrow 3$, and for all the rest of the digits, we do the swap $0\leftrightarrow 3$ and $1\leftrightarrow 2$.
\end{theorem}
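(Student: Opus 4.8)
The plan is to realize the claimed left–right symmetry of the tree as an honest orientation-reversing automorphism of the golden L, and then to track how that automorphism acts on tree words. Take $r = \sm 0110$, the reflection across the diagonal $y=x$. Geometrically $r$ swaps the two arms of the golden L (the $1\times\bar\phi$ and $\bar\phi\times1$ rectangles), so it is a symmetry of the surface; conjugating by the map $P$ of Definition~\ref{def:stretch} turns it into a reflection symmetry of the double pentagon, and hence of the pentagon billiard table. Consequently a periodic direction $\vecv$ and its image $r\vecv$ produce congruent billiard trajectories — the same picture up to a symmetry of the pentagon — which is the sense in which the two tree words give an ``identical'' trajectory. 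Since $r$ is an isometry it preserves lengths, so it sends short cylinder vectors to short cylinder vectors and maps $\Lambda$ to itself, away from the slope-$0$/slope-$\infty$ boundary that I treat separately below.

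Next I would compute the conjugation action of $r$ on the four generators. A direct multiplication (using $r^2 = I$) gives
\[
r\sigma_0 r = \sigma_3,\qquad r\sigma_3 r = \sigma_0,\qquad r\sigma_1 r = \sigma_2,\qquad r\sigma_2 r = \sigma_1,
\]
so conjugation by $r$ realizes the involution $0\leftrightarrow3$, $1\leftrightarrow2$ on indices. This is forced by the geometry: $r$ sends slope $s$ to slope $1/s$, fixing slope $1$, exchanging slopes $\bar\phi\leftrightarrow\phi$ and $0\leftrightarrow\infty$, and therefore reversing the cyclic order of the sectors $\Sigma_0,\Sigma_1,\Sigma_2,\Sigma_3$. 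Writing a tree vector as $\vecv = m(\sigma)\sv10 = \sigma_{k_n}\cdots\sigma_{k_1}\sv10$ for the tree word $k_1\cdots k_n$ and inserting $r^2=I$ between consecutive factors, I obtain
\[
r\vecv = (r\sigma_{k_n}r)\cdots(r\sigma_{k_1}r)\,r\sv10 = \sigma_{k_n'}\cdots\sigma_{k_1'}\sv01,
\]
where $k'$ denotes the index after the swap $0\leftrightarrow3$, $1\leftrightarrow2$.

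The main obstacle is the mismatch at the root: $r\sv10 = \sv01$ is not the base vector $\sv10$ of the tree, so $r\vecv$ is not yet in canonical tree-word form. To repair this I would compute $\sigma_i\sv01$ for each $i$ and record
\[
\sigma_0\sv01 = \sigma_1\sv10,\qquad \sigma_1\sv01 = \sigma_2\sv10,\qquad \sigma_2\sv01 = \sigma_3\sv10,\qquad \sigma_3\sv01 = \sv01.
\]
Thus for $k_1'\in\{0,1,2\}$ the innermost factor $\sigma_{k_1'}\sv01$ rewrites as $\sigma_{j_1}\sv10$ with $j_1 = k_1'+1$, putting $r\vecv = \sigma_{k_n'}\cdots\sigma_{k_2'}\sigma_{j_1}\sv10$ back into tree-word form. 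The degenerate case $\sigma_3\sv01 = \sv01$, which would fail to reduce, corresponds to $k_1' = 3$, i.e.\ $k_1 = 0$; but by Theorem~\ref{thm:saddles} tree words never begin with $\sigma_0$, so $k_1\in\{1,2,3\}$ and this case never arises.

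Finally I would compose the two index maps on the first digit: $k_1=1\mapsto k_1'=2\mapsto j_1=3$, $k_1=2\mapsto1\mapsto2$, and $k_1=3\mapsto0\mapsto1$, which is precisely the swap $1\leftrightarrow3$ claimed for the first digit, while the remaining digits $k_2,\ldots,k_n$ carry only the swap $0\leftrightarrow3$, $1\leftrightarrow2$. This gives exactly the stated transformation of tree words and completes the proof. The only genuinely delicate point is the root bookkeeping in the previous paragraph; everything else reduces to the four generator conjugations and the four evaluations $\sigma_i\sv01$, which are routine matrix computations.
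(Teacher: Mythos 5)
Your proposal is correct and follows essentially the same route as the paper: the paper's proof also hinges on the reflection $S=\sm 0110$, the conjugation relations $S\sigma_0S=\sigma_3$, $S\sigma_1S=\sigma_2$ (its Lemma~\ref{lem:transpose}), and the transfer of the $y=x$ symmetry to the pentagon via $P$. The only difference is bookkeeping: where you telescope $r^2=I$ through the product and repair the root with the identities $\sigma_{k}\sv 01=\sigma_{k+1}\sv 10$, the paper runs an induction whose length-one base case ($Sv_1=v_3$, $Sv_2=v_2$, $Sv_3=v_1$) encodes exactly the same first-digit swap $1\leftrightarrow 3$.
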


We need the following results, which follow from matrix multiplication.

\begin{lemma}\label{lem:transpose}
  If $A = \sm abca$, $B = A^T$, and $S = \sm 0110$, then
  \begin{enumerate}
    \item $SAS = B$ and $SBS = A$ and $AS = SB$ and $SA = BS$.
    \item $A \sv xy = S B \sv yx$.
  \end{enumerate}
\end{lemma}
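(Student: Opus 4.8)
The plan is to treat the whole lemma as a bookkeeping exercise about the single involution $S = \sm 0110$, which satisfies $S^2 = I$ (so $S^{-1}=S$) and acts as a ``swap''. The three facts I would isolate at the outset are: left-multiplication by $S$ interchanges the two rows of a matrix; right-multiplication by $S$ interchanges the two columns; and $S\sv xy = \sv yx$ interchanges the entries of a column vector. With these in hand, every assertion in the lemma becomes a one-line consequence, and the single place where the special shape of $A$ (equal diagonal entries) enters can be pinpointed precisely.

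For part (1), I would first record what $SMS$ does to an arbitrary $2\times 2$ matrix $M$: composing the row-swap with the column-swap reverses the order of both indices, so $SMS$ has the same off-diagonal entries as $M^T$, while its diagonal entries are the diagonal entries of $M$ read in the opposite order. Comparing with $M^T$, the two agree exactly when $M$ has equal diagonal entries, which is the standing hypothesis $A = \sm abca$. This gives $SAS = A^T = B$ at once. Since $B = \sm acba$ also has equal diagonal entries and $B^T = A$, the same computation gives $SBS = B^T = A$. The remaining two identities are then purely formal: multiplying $SAS = B$ on the left by $S$ and using $S^2 = I$ yields $AS = SB$, and multiplying on the right by $S$ yields $SA = BS$.

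For part (2), I would deduce it from part (1) rather than recompute. Since $S\sv xy = \sv yx$, we have $SB\sv yx = SB\bigl(S\sv xy\bigr) = (SBS)\sv xy$, and by part (1) the right-hand side equals $A\sv xy$, as claimed. If one prefers a self-contained check, both sides expand to $\sv{ax+by}{cx+ay}$, but the derivation from part (1) is shorter and makes clear why the identity holds.

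There is no genuine obstacle here: the statement is elementary and ultimately just multiplying out matrices. The one point worth flagging — and the only place the hypothesis on $A$ is actually used — is the equality $SMS = M^T$, which holds for a general $2\times 2$ matrix only when its two diagonal entries coincide. This is exactly why the lemma is stated for matrices of the form $\sm abca$ rather than for an arbitrary $A$.
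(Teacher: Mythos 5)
Your proof is correct. It is worth noting how it compares to the paper: the paper offers no written argument at all beyond the remark that the lemma ``follows from matrix multiplication,'' i.e.\ the intended proof is a brute-force expansion of both sides of each identity. Your route is genuinely different in structure: you reduce everything to three properties of the involution $S$ (row swap, column swap, entry swap, with $S^2=I$), establish the single nontrivial fact that $SMS = M^T$ holds for a $2\times 2$ matrix $M$ precisely when its diagonal entries coincide, and then obtain $SAS=B$, $SBS=A$ directly, the identities $AS=SB$ and $SA=BS$ by formal multiplication by $S$, and part (2) as a one-line corollary of part (1) via $\sv yx = S\sv xy$. What this buys is twofold: it isolates exactly where the hypothesis $A=\sm abca$ is used (nowhere else in the lemma does the special form matter), and it explains \emph{why} the lemma is stated for matrices with equal diagonal entries rather than for arbitrary $A$ --- a point the paper's computational shortcut leaves implicit. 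The paper's approach buys only brevity. Both are valid; yours is the more illuminating writeup, and your closing verification that both sides of (2) expand to $\sv{ax+by}{cx+ay}$ confirms consistency with the direct computation the paper had in mind.
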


\begin{proof}[Proof of Theorem \ref{thm:tree_symm}]
  Observe that $v_2 := \sigma_2 \sv 10$ satisfies $S v_2 = v_2$, and $v_1 := \sigma_1 \sv 10$ and $v_3 := \sigma_3 \sv 10$ satisfy $S v_1 = v_3$ and $S v_3 = v_1$. This proves the result for words of length 1.
  Since we can apply Lemma \ref{lem:transpose} to $(A, B) = (\sigma_0, \sigma_3)$ and to $(A, B) = (\sigma_1, \sigma_2)$, the result follows by induction.

  The above shows that the directions on the golden L corresponding to a tree word and its swapped word are symmetric across the line $y=x$. Applying the matrix $P$ to directions that are symmetric on the golden L results in directions that are symmetric across the long diagonal of the double pentagon, so they stay symmetric when lifted to the necklace, and when folded into a pentagon billiard trajectory.
\end{proof}

\begin{remark}
  On the golden L or on the double pentagon, the central representative of a cylinder of periodic trajectories passes through two regular Weierstrass points. In the double pentagon, these are the midpoints of the edges; in the golden L these are the centers of the rectangles and square, and the midpoints of the short sides of the rectangles. After lifting to the necklace and folding to the pentagon billiard table, both points are mapped to the midpoint of the same edge, or of all of the edges in the case of trajectories with rotational symmetry. One can observe in the pictures that anytime a trajectory hits the midpoint of a given edge, it does so twice. Also see a related discussion in \cite[\S 5]{dodecahedron}.
\end{remark}

\subsection{Directions to saddle connection vectors on the golden L}\label{sec:cont_frac}

Theorem \ref{thm:saddles} gives us all of the saddle connection vectors for the golden L. In this section, we answer a related question: \emph{Given a periodic direction on the golden L, what is the saddle connection vector in that direction?}

\begin{remark}
  Given a rational slope $a/b$ in lowest terms on the square torus, we can easily see that the associated saddle connection vector in that direction is $[b,a]$. However, suppose that we are given a direction such as $\left[1618/3141, 7776/72785\right]$ on the square torus. It is clear that this direction is periodic, because it corresponds to a rational slope. However, in order to get the slope in lowest terms, we need to perform the Euclidean algorithm on the two fractions. What follows is an analogous process for periodic directions on the golden L.
  %
  %
\end{remark}

\begin{definition} \label{def:algorithm}
  We define an iterated process on a direction vector $\vv$ in the first quadrant:

  Let $\vv_0 = \vv$. For $i\ge 0$, let $\vv_{i+1} = \sigma_{k_i}^{-1} \vv_{i}$ where $\vv_{i} \in \Sigma_{k_i}$. Let $a_\vv = \{k_0, k_1, \ldots\}$.

\end{definition}

Here $\vv_0$ is the original direction vector, which is in some sector $\Sigma_{k_0}$. The operation $\sigma^{-1}_{k_0}$ stretches sector $\Sigma_{k_0}$ into the entire first quadrant, and $\sigma^{-1}_{k_0} \vv_0$ is the image vector of $\vv_0$ under this operation. We iterate this process, repeatedly stretching whichever sector our vector lies in, into the entire first quadrant. The sequence $a_\vv$ is the \emph{itinerary} of sectors that the vector lies in at each step of the process. This is analogous to the continued fraction algorithm. If our vector is in a periodic direction, eventually one of its images will land on the boundary of the sector, so the process stops:

\begin{proposition}\label{prop:terminates}
  If $\vv$ is in a saddle connection direction, then
  \begin{enumerate}
    \item The sequence $a_\vv$ is eventually constant equal to $0$, starting
          from some index $k$, and  \label{part:terminates}
    \item For this value of $k$, $\vv_k= \sv {\ell_\vv} 0$ for some ${\ell_\vv} \in \R$. \label{whatisell}
  \end{enumerate}
\end{proposition}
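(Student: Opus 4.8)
The plan is to reduce the statement about the arbitrary direction vector $\vecv$ to the corresponding statement about the distinguished long saddle connection vector lying in that direction, and then to invoke the length-decreasing descent already established in the proof of the Tree Theorem (Theorem~\ref{thm:saddles}).

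First I would observe that the entire process of Definition~\ref{def:algorithm} depends only on the \emph{direction} of $\vecv$, not on its length. Each sector $\Sigma_k$ is a cone cut out by homogeneous inequalities on the slope, so membership $\vecv_i\in\Sigma_{k_i}$ is scale-invariant; and each $\sigma_{k_i}^{-1}$ is linear, hence sends positive multiples to positive multiples. Consequently, if $\vecv' = c\,\vecv$ with $c>0$, the two runs of the algorithm produce the identical itinerary and satisfy $\vecv'_i = c\,\vecv_i$ at every step. Since $\vecv$ is in a saddle connection direction, that direction contains a long saddle connection vector $w\in\Lambda$, and $\vecv = c\,w$ for some $c>0$. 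It therefore suffices to run the algorithm on $w$ and carry the scalar $c$ along to the end.

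Next I would identify the run starting from $w\in\Lambda$ with the descent from the proof of Theorem~\ref{thm:saddles}. From the Lemma giving $\Lambda_i=\sigma_i\Lambda=\Lambda\cap\Sigma_i$ we get $\sigma_{k_i}^{-1}(\Lambda\cap\Sigma_{k_i})=\Lambda$, so every iterate $w_i$ remains in $\Lambda$; and, as noted in that proof, applying $\sigma_{k_i}^{-1}$ to a vector of $\Lambda_{k_i}$ strictly shortens it, the only exception being $\sv 10$, which is fixed by $\sigma_0$ and lies in $\Sigma_0$. Because $\Lambda$ is discrete, only finitely many of its vectors are shorter than $|w|$, so the strictly decreasing chain $|w_0|>|w_1|>\cdots$ cannot continue indefinitely; the descent must reach the unique shortest vector $\sv 10$ at some finite index $k$, giving $w_k=\sv 10$.

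Finally I would assemble the two claims. Once $w_k=\sv 10$ we have $w_k\in\Sigma_0$ and $\sigma_0^{-1}\sv 10=\sv 10$, so the run is stationary from step $k$ on and the itinerary $a_v$ is constant equal to $0$ for every index $\ge k$, which is part~\ref{part:terminates}. Transporting back by the scalar $c$ gives $\vecv_k = c\,w_k = \sv{c}{0}$, so part~\ref{whatisell} holds with $\ell_\vecv=c$. The step I expect to be the only real obstacle is the termination itself: one must verify that the ``strictly shortens except at $\sv 10$'' property holds at each step and combine it cleanly with the discreteness of $\Lambda$ to exclude an infinite descent. The remaining points — scale-invariance of the itinerary and the identification $\vecv=c\,w$ — are routine bookkeeping.
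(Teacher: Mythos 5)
Your proposal is correct and takes essentially the same route as the paper: reduce to the long saddle connection vector in the given direction via linearity and scale-invariance, reach $\sv 10$ in finitely many steps using the machinery of Theorem~\ref{thm:saddles}, and conclude stationarity because $\sv 10 \in \Sigma_0$ is fixed by $\sigma_0^{-1}$. The only difference is cosmetic: the paper cites the Tree Theorem as a black box (every vector of $\Lambda$ is a finite word in the $\sigma_i$ applied to $\sv 10$), whereas you re-run the strict-shortening-plus-discreteness descent from that theorem's proof, which makes the termination step slightly more self-contained but introduces no new idea.
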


\begin{proof}
  By Theorem \ref{thm:saddles}, every saddle connection vector can be obtained by applying a finite number of $\sigma_i$'s to $\sv 10$. Thus, a finite number of repeated applications of the appropriate $\sigma_i^{-1}$ will transform any saddle connection vector to $\sv 10$. Since $\vv$ is in a saddle connection direction, it is a constant multiple of a saddle connection vector, so a finite number of repeated applications of the appropriate $\sigma_i^{-1}$'s will transform it to a constant multiple of $\sv 10$.
  After this finite number of applications, the transformed vector is ${\ell_\vv} \sv 10$ in $\Sigma_0$, and $\sigma_0 ({\ell_\vv}\sv 10) = {\ell_\vv}\sigma_0\sv 10= {\ell_\vv}\sv 10$, so it stays in $\Sigma_0$.
\end{proof}


We can use this process to give the saddle connection vectors in a particular direction:

\begin{proposition} \label{prop:scvectors}
  Given a periodic direction $\vv$ on the golden L, the long and short saddle connection vectors in that direction are $\vv / \ell_\vv$ and $\vv / (\phi\ell_\vv)$, respectively, where $\ell_\vv$ is as in Proposition~\ref{prop:terminates}.
\end{proposition}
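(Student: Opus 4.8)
The plan is to read the long saddle connection vector directly off the terminating algorithm of Proposition~\ref{prop:terminates}, identify it as an element of $\Lambda$ via the Tree Theorem, and then deduce the short saddle connection vector from the fixed length ratio $\phi$.

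First I would unwind the algorithm. By Proposition~\ref{prop:terminates} there is an index $k$ with $\vecv_k = \sv{\ell_\vecv}{0} = \ell_\vecv\,\sv{1}{0}$, where each step replaced $\vecv_i$ by $\sigma_{k_i}^{-1}\vecv_i$. Composing and inverting these maps gives
\[
\vecv = \vecv_0 = \sigma_{k_0}\sigma_{k_1}\cdots\sigma_{k_{k-1}}\,\vecv_k = \ell_\vecv\,\sigma_{k_0}\sigma_{k_1}\cdots\sigma_{k_{k-1}}\sv{1}{0},
\]
so that $\vecv/\ell_\vecv = \sigma_{k_0}\cdots\sigma_{k_{k-1}}\sv{1}{0}$. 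Note $\ell_\vecv>0$, since the $\sigma_i$ have nonnegative entries and $\vecv$ lies in the first quadrant, so $\vecv/\ell_\vecv$ points in the direction $\vecv$.

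Second, I would invoke the Tree Theorem (Theorem~\ref{thm:saddles}): every vector obtained by applying a product of the $\sigma_i$ to $\sv{1}{0}$ lies in $\Lambda$, the set of long saddle connection vectors (equivalently, short cylinder holonomy vectors). Hence $\vecv/\ell_\vecv \in \Lambda$. Since $\Lambda$ contains exactly one holonomy vector in each periodic direction of the quadrant, and $\vecv/\ell_\vecv$ has direction $\vecv$, this element must be the long saddle connection vector in the direction $\vecv$. Finally, for the short saddle connection vector I use the structural fact recorded before Definition~\ref{def:algorithm}: in each periodic direction the long and short saddle connections have length ratio $\phi$. Thus the short one is $\bar\phi$ times the long one, namely $\bar\phi\,(\vecv/\ell_\vecv) = \vecv\bar\phi/\ell_\vecv$, as claimed.

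The argument is short and the only point I would be careful about is bookkeeping: matching the order in which the algorithm applies the $\sigma_{k_i}^{-1}$ against the $m(\sigma)$ convention used in the Tree Theorem. However, since all I need is that $\vecv/\ell_\vecv$ is \emph{some} product of the $\sigma_i$ applied to $\sv{1}{0}$, the precise ordering is immaterial to the conclusion. The one substantive hypothesis I rely on, which I would state explicitly, is the uniqueness of the long saddle connection vector in a given direction; this is what licenses identifying the element of $\Lambda$ produced above with the vector named in the statement, and it is exactly the definition of $\Lambda$ as the collection of short cylinder holonomy vectors.
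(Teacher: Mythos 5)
Your proposal is correct and follows essentially the same route as the paper's own (much terser) proof: unwind the algorithm of Definition~\ref{def:algorithm} by linearity to write $\vecv = \ell_\vecv\,\sigma_{k_0}\cdots\sigma_{k_{k-1}}\sv 10$, recognize $\vecv/\ell_\vecv$ as a long saddle connection vector, and scale by $\bar\phi$ to get the short one. You merely make explicit what the paper leaves implicit, namely the appeal to Theorem~\ref{thm:saddles} to identify the product with an element of $\Lambda$ and the uniqueness of the long saddle connection vector in a given direction.
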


\begin{proof}
  The maps $\sigma_i$ are linear. By Proposition \ref{prop:terminates}(2), applying a sequence of $\sigma_i$s eventually yields a horizontal vector of the form $\sv {\ell_\vv} 0$. The horizontal long saddle connection vector is $\sv 10$, so if we end with $\sv {\ell_\vv} 0$ it means we started from $\ell_\vv$ times a long saddle connection vector. The short saddle connection vector is $1/\phi$ times the long saddle connection vector.
\end{proof}

\begin{theorem}\label{thm:abcd}
  Corresponding to any periodic direction vector $\vv$ is a unique tree word $k_0 k_1\cdots k_n$, which is an itinerary of sectors as in Definition~\ref{def:algorithm}, such that $\vv = \ell_\vv \sigma_{k_0} \ldots \sigma_{k_n} \sv{1}{0}$ for some length $\ell_\vv$. Then the saddle connection vector in the direction $\vv$ is $\sigma_{k_0}\ldots\sigma_{k_n}\sv 10$. Furthermore, the saddle connection vector is of the form $\sv{a+b\phi}{c+d\phi}$ for $a,b,c,d\in\N$.
\end{theorem}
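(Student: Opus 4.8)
The plan is to handle the three assertions of the theorem in turn, leaning on the machinery already in place: the partition of the first quadrant into the sectors $\Sigma_i$, the terminating algorithm of Definition~\ref{def:algorithm} together with Proposition~\ref{prop:terminates}, and the identification of the long saddle connection vector in Proposition~\ref{prop:scvectors}. The first two claims are essentially bookkeeping on top of those results; the genuinely new content is the arithmetic form $[a+b\phi,\,c+d\phi]$ with nonnegative integer coefficients, and that is where I would concentrate the argument.

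First I would establish existence and uniqueness of the tree word. Running the algorithm of Definition~\ref{def:algorithm} on $\vecv$, at each step the current vector $\vecv_i$ lies in exactly one sector $\Sigma_{k_i}$, since the $\Sigma_i$ partition $\Sigma$; this forces the digit $k_i$. By Proposition~\ref{prop:terminates} the process reaches $\vecv_k=\sv{\ell_v}{0}$ after finitely many steps, after which the itinerary is constant $0$. Unwinding the recursion $\vecv_i=\sigma_{k_i}\vecv_{i+1}$ and writing $n$ for $k-1$ gives $\vecv=\ell_v\,\sigma_{k_0}\cdots\sigma_{k_n}\sv{1}{0}$, and uniqueness of the word is immediate from the determinism of the algorithm. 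For the second claim, Proposition~\ref{prop:scvectors} says the long saddle connection vector in direction $\vecv$ is $\vecv/\ell_v$, which is exactly $\sigma_{k_0}\cdots\sigma_{k_n}\sv{1}{0}$; that this is a genuine element of $\Lambda$ is guaranteed by the Tree Theorem~\ref{thm:saddles}, since it is a product of $\sigma_i$'s applied to $\sv{1}{0}$.

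The heart of the proof is the last claim, and the key observation is that the set $\mathbf{N}[\phi]=\{a+b\phi:a,b\in\mathbf{N}\}$ of elements with nonnegative integer coefficients is closed under both addition and multiplication. Closure under addition is clear; closure under multiplication uses the defining relation $\phi^2=\phi+1$, since
\[
(a+b\phi)(c+d\phi)=(ac+bd)+(ad+bc+bd)\phi,
\]
and both coefficients are nonnegative whenever $a,b,c,d\ge 0$. Inspecting Definition~\ref{def:sigmas}, every entry of each $\sigma_i$ lies in $\mathbf{N}[\phi]$ (the entries are among $0,1,\phi$), and $\sv{1}{0}$ has both coordinates in $\mathbf{N}[\phi]$. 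Since matrix--vector and matrix--matrix products involve only sums and products of entries, I would conclude by induction on the word length that $\sigma_{k_0}\cdots\sigma_{k_n}\sv{1}{0}$ has both coordinates in $\mathbf{N}[\phi]$, which is precisely the asserted form.

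The only real obstacle is keeping the coefficients \emph{nonnegative} rather than merely integral: were we content with $a,b,c,d\in\mathbf{Z}$, the claim would be automatic since the $\sigma_i$ have entries in $\mathbf{Z}[\phi]$. The nonnegativity is exactly what the signs of the coefficients in $\phi^2=\phi+1$ deliver, so the whole statement rests on the fact that $\mathbf{N}[\phi]$ is a semiring; I would isolate this as the organizing observation of the proof.
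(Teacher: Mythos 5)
Your proposal is correct and follows essentially the same route as the paper: the first two claims are dispatched via Definition~\ref{def:algorithm}, Proposition~\ref{prop:terminates}, and Proposition~\ref{prop:scvectors}, and the arithmetic form is obtained from the nonnegativity of the entries of the $\sigma_i$ together with the identity $\phi^2=\phi+1$. Your packaging of that last step as ``$\{a+b\phi : a,b\in\N\}$ is a semiring'' is just a tidier statement of the paper's computation (and rightly notes that the $\mathbf{Z}[\phi]$ observation alone would not give nonnegativity), not a different argument.
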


\begin{proof}
  That the saddle connection vector is $\sigma_{k_0}\ldots\sigma_{k_n}\sv 10$ is clear by construction. Multiplying out the product of matrices $\sigma_{k_i}$ with $\sv 10$ yields a vector consisting of a sum of positive integers and positive integer multiples of powers of $\phi$. Applying the identity $\phi^2 = 1+\phi$ reduces each coordinate to the sum of a positive integer and a positive integer multiple of $\phi$. Said another way, the vector $\sv 10$ and each matrix $\sigma_i$ each have all of their entries in the ring $\Z[\phi]$, so their products do as well.
\end{proof}

\section{Combinatorial periods of pentagon and double pentagon trajectories}

\subsection{Cutting sequences on the double pentagon}\label{sec:cutting}

\begin{definition}
  Given a translation surface decomposed into polygons with labeled edges,
  and an oriented bi-infinite linear trajectory $\tau$ on this translation surface, let $c(\tau)$
  be the bi-infinite sequence of labels of the edges that the trajectory crosses.
  If the trajectory hits a corner, it stops; in this case, the sequence
  is not bi-infinite; otherwise it is.
\end{definition}

\begin{definition}\label{def:transition}
  The \emph{transition} $ab$ is \emph{allowed} in sector $\Sigma_i$ if some trajectory in $\Sigma_i$ cuts through edge $a$ and then through edge $b$.

  For $i=0,1,2,3$, the transition diagram $\D_i$ is a directed graph whose vertices are edge labels of the double pentagon (the numbers $1,\ldots,5$), with an arrow from edge label $a$ to edge label $b$ if and only if the transition $ab$ is allowed in $\Sigma_i$.
  We say that a sequence is \emph{admissible} in diagram $\D(i)$ if it is given by a path following arrows on $\D(i)$.
\end{definition}

\begin{lemma}
  For a trajectory with $\theta\in[0,\pi/5)$ on the double pentagon, the corresponding transition diagram is:

  \vspace{-1em}

  \begin{center}
    $\begin{tikzcd} 
        \1 \arrow[r, bend left=15] &
        \2\arrow[r, bend left=15] \arrow[l, bend left=15] &
        \3 \arrow[r, bend left=15]  \arrow[l, bend left=15] &
        \4 \arrow[r, bend left=15] \ \arrow[l, bend left=15] &
        \5  \arrow[l, bend left=15]  \\
      \end{tikzcd}$
  \end{center}
\end{lemma}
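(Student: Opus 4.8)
The plan is to compute the diagram by a direct geometric enumeration of chords through a single pentagon, performed separately for the two pentagons making up the double pentagon and then combined. Since opposite parallel edges are identified, a trajectory that crosses the edge labeled $a$ leaves one pentagon and enters the other; hence a transition $ab$ occurs exactly when some chord of direction $\theta$ enters a pentagon through (a copy of) edge $a$ and exits through edge $b$. This reduces the lemma to the finite question: for each edge and each $\theta\in[0,\pi/5)$, through which edges can a chord of direction $\theta$ exit?

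First I would record the edge directions. With one horizontal edge, the five edges of a regular pentagon point in the directions $0,\pi/5,2\pi/5,3\pi/5,4\pi/5$ modulo $\pi$, equally spaced by $\pi/5$. Consequently the flow direction $\theta\in(0,\pi/5)$ is parallel to no edge and lies strictly between the two extreme edge-directions; computing the sign of $\theta\cdot n_e$ for each outward normal $n_e$ then splits the five edges cleanly into entry edges and exit edges, uniformly across the open sector. The two extreme edges (directions $0$ and $\pi/5$) become parallel to $\theta$ exactly at the two sector boundaries $\theta=0$ and $\theta=\pi/5$, which is what makes the half-open interval the correct unit of direction.

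Next I would determine which (entry, exit) pairs actually occur. By convexity the exit edges reachable from a fixed entry edge form a contiguous arc, so an entry edge yields either one exit edge or a pair of adjacent ones; the clean criterion separating the two cases is vertex visibility — an entry edge splits into two exit edges precisely when the vertex between those exit edges is visible from the entry edge along a direction in the sector. For each entry edge I would compute the cone of directions to every candidate separating vertex and intersect it with $[0,\pi/5)$. This both lists the realized transitions and shows that they are constant throughout the open sector, since a transition can change only when a trajectory runs into a vertex, i.e. at a sector boundary.

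Finally I would transport this to the second pentagon and combine. As the second pentagon is the point-reflection of the first, a chord of direction $\theta$ in it corresponds to a chord of direction $\theta+\pi$ in the first, which merely reverses every transition; in terms of the shared edge labels this contributes exactly the reverses of the first pentagon's arrows. Taking the union makes the diagram symmetric, and reading off the incidences shows the underlying graph is a path on the five labels, with the two extreme labels having degree one. Relabeling the vertices consecutively along this path as $1,\dots,5$ gives precisely the stated diagram. The main obstacle is the bookkeeping in the middle steps — tracking labels correctly through the edge identification and the reflection, and checking that the split/no-split alternative at each entry edge is exactly as claimed over the whole half-open sector; I would cross-check the resulting eight transitions against the equivalent diagram of \cite{DFT}, which the paper notes agrees up to a relabeling of edges.
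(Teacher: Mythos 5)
Your geometric framework is sound, and it is essentially a rigorous organization of what the paper actually does: the paper's entire proof is ``easily checked by hand, using Figure~\ref{four-dp}'', i.e., a direct geometric enumeration of which chords can follow which. Your classification of entry/exit edges by the sign of the flow direction against outward normals, the convexity/vertex-visibility argument for when an entry edge sees two exit edges, the observation that the set of realized transitions is constant on the open sector because it can only change when a trajectory meets a vertex, and the point-reflection trick producing the second pentagon's transitions as the reverses of the first's, are all correct and are exactly the right way to structure that hand check. (Carrying it out: for $\theta\in(0,\pi/5)$ each pentagon has three entry edges and two exit edges, with exactly one entry edge seeing both exits, giving four arrows from one pentagon plus their four reverses from the other --- eight arrows, matching the stated diagram.)

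The one genuine gap is your final step. The labels $1,\dots,5$ are not yours to choose: they are inherited from the unfolding of the billiard table (Figure~\ref{fig:necklace}), and the lemma asserts the transition diagram \emph{with those labels}. Having shown that the underlying graph is a path with the two extreme labels of degree one, you may not ``relabel the vertices consecutively along this path''; you must verify that the paper's actual labels occur along the path in the order $1,2,3,4,5$. Proving the diagram is isomorphic to a $5$-path is strictly weaker than the lemma, and the specific labels matter downstream: the diagrams $\D(i)$ of Lemma~\ref{lem:dptd} and the substitutions of Definition~\ref{def:subs} are stated in terms of them, as is the identification of the horizontal cutting sequences with $\ar 12 \ar 21$ and $\ar 34 \ar 43$. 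In fairness, your penultimate paragraph flags precisely this label-tracking as the main obstacle (and proposes the cross-check against the relabeled diagrams of the reference you cite), but the proof is complete only once that bookkeeping is carried out against the paper's figure --- which is, in effect, the entirety of the paper's own proof.
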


\vspace{-2em}

\begin{proof}
  This is easily checked by hand, using Figure \ref{four-dp}. To do this, use one of the pentagons with its edge labels (thin), and ignore the internal line segments and their bold labels.
\end{proof}

\begin{definition}
  For $i=0,1,2,3$, define $\tilde{\sigma_i} = P\sigma_i P^{-1}$.
\end{definition}

These matrices do the same job as the $\sigma_i$'s on the golden L, now on the double pentagon.

\begin{lemma}
  The image of the double pentagon under each $\tilde{\sigma_i}$ is as shown in Figure \ref{four-dp}.
\end{lemma}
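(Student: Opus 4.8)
The plan is to exploit the conjugation relation $\tilde\sigma_i = P\sigma_i P^{-1}$ together with the fact, established in the lemma on $P$ above, that $P$ is an affine equivalence carrying the golden L to the double pentagon (so $P^{-1}$ carries the double pentagon back to the golden L). Writing $D$ for the double pentagon and $L$ for the golden L, we have $P^{-1}(D)=L$ as translation surfaces, so that
\[
\tilde\sigma_i(D) = P\,\sigma_i\,P^{-1}(D) = P\bigl(\sigma_i(L)\bigr).
\]
Thus it suffices to understand the image $\sigma_i(L)$ of the golden L together with its cut-and-paste decomposition, and then push this picture forward by the single shear $P$ to obtain the asserted figure.

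Concretely, I would proceed in three steps. First, apply $P^{-1}$ to the double pentagon, recovering the golden L with its standard square-plus-two-rectangles decomposition as in Figure~\ref{fig:cutandpaste}. Second, apply the explicit matrix $\sigma_i$ of Definition~\ref{def:sigmas}: since each $\sigma_i$ is an element of the Veech group (indeed of its positive semigroup), it is the derivative of an affine automorphism of $L$, so the sheared region $\sigma_i(L)$ cuts into finitely many pieces that reassemble, by translations respecting the edge identifications, back into $L$. These cutting lines and translations are the combinatorial data I would record at this stage. Third, applying $P$ transports both $\sigma_i(L)$ and its cutting data into double-pentagon coordinates; the $P$-images of the golden L cutting lines are exactly the bold-labeled internal segments drawn in Figure~\ref{four-dp}, while the thin edge labels $1,\dots,5$ are carried along from the double pentagon's own edges.

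To pin down the figure precisely, I would compute each $\tilde\sigma_i = P\sigma_i P^{-1}$ as an explicit matrix over $\mathbf{Z}[\phi]$ (using $\cos(\pi/5)=\phi/2$), apply it to the vertices of the double pentagon, and check that the resulting polygon and its internal segments coincide with those shown. That the image genuinely reassembles into $D$ is automatic once one knows $\tilde\sigma_i$ is a Veech group element of $D$: conjugation by the affine equivalence $P$ identifies the Veech group of $L$ with that of $D$, so $\tilde\sigma_i$ is the derivative of the affine automorphism $\tilde\phi_i = P\phi_i P^{-1}$.

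The main obstacle will be bookkeeping rather than any conceptual difficulty. One must track the finitely many cut-and-paste pieces and their edge identifications through the composition $P\circ\sigma_i\circ P^{-1}$, and confirm for each of the four values of $i$ that the reassembled pieces match the labeled internal segments of Figure~\ref{four-dp}. Since $P$, the $\sigma_i$, and their inverses are all explicit and the pieces are finite in number, this is a routine finite verification, most cleanly carried out by direct computation (or by hand, as with the transition-diagram check above).
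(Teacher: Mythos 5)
Your proposal is correct and takes essentially the same route as the paper: the paper's proof is likewise a direct computation through the conjugation $\tilde{\sigma}_i = P\sigma_i P^{-1}$, tracking the action of the matrices on the golden L (e.g.\ $\sigma_0^{-1}$ fixes $[1,0]$ and sends $[0,1]$ to $[-\phi,1]$, i.e.\ a twist by one cylinder length) and transferring the resulting picture to the double pentagon. The only adjustment: per the caption of Figure~\ref{four-dp}, the pictures actually show the images under the inverses $\tilde{\sigma}_i^{-1}$, so your finite verification should be run on the inverse matrices, which changes nothing structural in your argument.
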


\begin{figure}[!ht]
  \centering
  \includegraphics[trim={0 0.5em 0 0},clip,width=400pt]{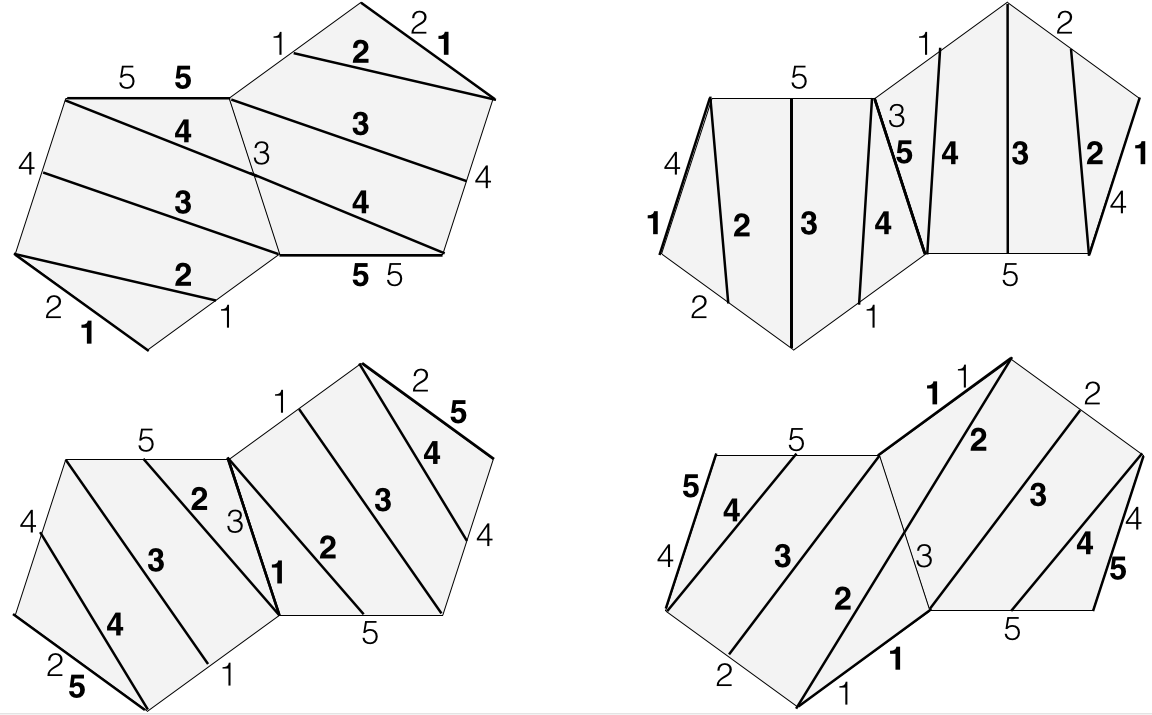}
\caption{The image of the double pentagon under each $\tilde{\sigma_i}^{-1}$: top left $\tilde{\sigma_0}^{-1}$,  bottom left $\tilde{\sigma_1}^{-1}$,  top right $\tilde{\sigma_2}^{-1}$,  bottom right $\tilde{\sigma_3}^{-1}$. \label{four-dp}}
\end{figure}

\begin{proof}
  We obtain these diagrams via direct computation in the golden L: For example,  ${\sigma_0}^{-1}$ sends $\sv 10$ to $\sv 10$ in the golden L, so it also preserves the horizontal direction in the double pentagon. ${\sigma_0}^{-1}$ sends $\sv 01$ to $\sv {-\phi} 1$ in the golden L, meaning that ${\sigma_0}^{-1}$ twists to the left by one cylinder length. The other three diagrams are obtained in the same way.
\end{proof}

\begin{lemma}\label{lem:dptd}
  For a trajectory $\tau$ with angle $0 \leq \theta < \pi/5$, if we transform $\tau$ by $\tilde{\sigma_i}$, the induced effect on the associated cutting sequence is as in the following diagrams:
\end{lemma}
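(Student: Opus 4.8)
The plan is to reduce the statement to a finite inspection of Figure~\ref{four-dp}, exactly as in the proof of the transition-diagram lemma above, but now reading \emph{both} the thin edge labels and the bold internal labels. The first point to record is a duality: transforming the trajectory $\tau$ by $\tilde{\sigma_i}$ and then reading its cutting sequence on the standard double pentagon is the same as leaving $\tau$ fixed (so its direction stays in $[0,\pi/5)$) and instead applying $\tilde{\sigma_i}^{-1}$ to the surface, since crossings are preserved by any affine map. As $\tilde{\sigma_i} = P\sigma_i P^{-1}$ is an affine automorphism of the double pentagon, the sheared image $\tilde{\sigma_i}^{-1}(\mathrm{DP})$ cuts and reassembles back into the standard double pentagon; this is precisely the content of Figure~\ref{four-dp}, where the thin edges carry the labels $1,\ldots,5$ of the reassembled standard double pentagon and the bold internal segments record the images under $\tilde{\sigma_i}^{-1}$ of the original edges, i.e.\ the edges that the \emph{transformed} cutting sequence sees.

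With this dictionary in place, the correspondence between the two cutting sequences becomes directly visible along a single geodesic. As $\tau$ (with $\theta\in[0,\pi/5)$) traverses the sheared polygon, it meets the thin and bold edges in some interleaved order: the thin crossings reproduce $c(\tau)$, while the bold crossings, after reassembly, spell out the longer (refined) sequence $c(\tilde{\sigma_i}\tau)$. I would organize the reading by allowed transitions. The transition-diagram lemma above tells me which transitions $ab$ between thin edges can occur for $\theta\in[0,\pi/5)$, and for each such transition I record, from the figure, exactly which bold edges are crossed in the course of passing from thin edge $a$ to thin edge $b$. Attaching that block of bold labels to each arrow of the thin transition diagram produces the asserted diagrams. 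Because $\theta$ ranges over an \emph{open} sector, each crossing configuration is combinatorially stable, so a single representative direction per transition suffices, and I would carry the check out once for each of the four cases $i=0,1,2,3$ using the four panels of Figure~\ref{four-dp}.

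The main obstacle is bookkeeping rather than any conceptual difficulty. I must identify correctly, in each of the four sheared panels, which bold segment is the image of which original edge, and I must track the edge identifications of the double pentagon so that a trajectory exiting one edge and re-entering at its partner is followed consistently across the cut-and-paste. Orientation conventions and the labeling inherited from the unfolding in Figure~\ref{fig:necklace} must be applied uniformly; a single mislabeling or sign slip in one panel would corrupt that panel's diagram. Once the figure is annotated carefully, however, each of the four diagrams follows by inspection, so the argument is a direct and finite verification with no genuinely hard step.
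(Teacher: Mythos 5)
Your overall strategy --- trade the transformation of $\tau$ for the inverse transformation of the surface, so that $c(\tilde{\sigma_i}\tau)$ is read off from the bold (image) edges that the fixed trajectory $\tau$ crosses in Figure~\ref{four-dp}, organized by the allowed transitions of the thin diagram --- is exactly the strategy of the paper's proof. The gap is in your well-definedness step. You claim that a single representative segment per transition suffices because ``$\theta$ ranges over an open sector, so each crossing configuration is combinatorially stable.'' But the block of bold edges crossed strictly between two consecutive thin crossings is \emph{not} a function of the transition $ab$ and the direction alone: it also depends on \emph{where} on edges $a$ and $b$ the trajectory crosses them. Indeed, for each $i$ there is a bold edge that crosses thin edge $k$ at its midpoint (these are precisely the bold labels attached to the \emph{nodes}, rather than the arrows, of $\D(i)$); a trajectory crossing edge $k$ on one side of the midpoint meets that bold edge just \emph{before} the thin crossing, and on the other side just \emph{after}. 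So your per-arrow bookkeeping, as described, would assign that bold letter to different blocks for different trajectories realizing the same transition, and openness in $\theta$ does nothing to prevent this --- it is a positional, not a directional, phenomenon.

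The paper closes exactly this gap by canonicalizing the trajectory before reading: it deforms $\tau$ to the piecewise-linear zigzag path through the \emph{midpoints} of the successive edges crossed, so that each ambiguous bold crossing coincides with its thin crossing and is recorded at a node, with only the unambiguous bold crossings assigned to arrows; one then checks by hand that this deformation does not change the set of image edges crossed (this is where the hypothesis $\theta\in[0,\pi/5)$ is actually used). If you add this canonicalization --- or an equivalent well-definedness argument, for instance analyzing how the bold saddle connections decompose into chords of the two pentagons and showing the node--arrow--node--arrow reading is invariant as the crossing points slide along the edges --- then your finite inspection of the four panels goes through and yields the four diagrams.
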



$\D$: \ \  \ \  \begin{tikzcd} 
  \1 \arrow[r, bend left=15] &
  \2\arrow[r, bend left=15] \arrow[l, bend left=15] &
  \3 \arrow[r, bend left=15]  \arrow[l, bend left=15] &
  \4 \arrow[r, bend left=15] \ \arrow[l, bend left=15] &
  \5  \arrow[l, bend left=15]  \\
\end{tikzcd} \hspace{0.9in} $\1\2\3\4\5$

$\D(0)$: \begin{tikzcd} 
  \2 \arrow[r, bend left=15] &
  \1\arrow[r, bend left=15,"23"] \arrow[l, bend left=15] &
  \4 \arrow[r, bend left=15]  \arrow[l, bend left=15,"32"] &
  \3 \arrow[r, bend left=15,"4"]  \arrow[l, bend left=15] &
  \5  \arrow[l, bend left=15,"4"]
\end{tikzcd} \hspace{0.95in} $\2\1 23 \4 \3 4 \5$

$\D(1)$: \begin{tikzcd}
  \3 \arrow[r, bend left=15,"4"] &
  \5 \arrow[r, bend left=15,"432"] \arrow[l, bend left=15,"4"] &
  \1 \arrow[r, bend left=15,"23"]  \arrow[l, bend left=15,"234"] &
  \4 \arrow[r, bend left=15,"3"]  \arrow[l, bend left=15,"32"] &
  \2  \arrow[l, bend left=15,"3"]
\end{tikzcd}\hspace{1in} $\3 4 \5 432 \1 23 \4 3 \2$

$\D(2)$: \begin{tikzcd}
  \4 \arrow[r, bend left=15,"3"] &
  \2\arrow[r, bend left=15,"34"] \arrow[l, bend left=15,"3"] &
  \5 \arrow[r, bend left=15,"432"]  \arrow[l, bend left=15,"43"] &
  \1 \arrow[r, bend left=15,"2"]  \arrow[l, bend left=15,"234"] &
  \3  \arrow[l, bend left=15,"2"]
\end{tikzcd} \hspace{0.95in} $\4 3\2 34 \5 432 \1 2 \3$

$\D(3)$: \begin{tikzcd}
  \1 \arrow[r, bend left=15,"2"] &
  \3\arrow[r, bend left=15] \arrow[l, bend left=15,"2"] &
  \2 \arrow[r, bend left=15,"34"]  \arrow[l, bend left=15] &
  \5 \arrow[r, bend left=15]  \arrow[l, bend left=15,"43"] &
  \4  \arrow[l, bend left=15]  \\
\end{tikzcd} \hspace{0.95in} $\1 2 \3 \2 34 \5 \4$

On the left, we use the usual presentation of such ``generation'' transition diagrams in the literature. On the right, we use a simplified form that emphasizes the one-dimensional nature of a path on a diagram. (This was also done in \cite[\S 2.3]{DFT}.)

\begin{proof}
  Given any periodic trajectory, its cutting sequence is a periodic word in the alphabet $\{1,2,3,4,5\}$ describing the edges it crosses. We can deform this trajectory into a piecewise-linear zigzag path that connects midpoints of successive edges crossed. The original linear trajectory and the zigzag path cross the same image edges for trajectories with $\theta \in [0,\pi/5)$; this can easily be checked by hand.

  Now we associate to each node label $k$ of the transition diagram $\D(i)$ the image edge that passes through the midpoint of edge $k$, or that coincides with edge $k$. Using Figure \ref{four-dp}, we label the arrow between these image edges with the other image edges that must be crossed between original edges $k$ and $k\pm 1$, the sign depending on if they are upper arrows or lower arrows in the diagram; the edges that are crossed are also easily checked by hand.
\end{proof}

\begin{definition}\label{def:decorated}
  A \emph{decorated cutting sequence} is a cutting sequence that includes the extra information of the arrows from the transition diagram; in particular, it indicates which symbol is next. To obtain a decorated cutting sequence from a cutting sequence, we add an exponent between every pair of symbols, which is the second symbol of the pair: the periodic cutting sequence $e_1 e_2 \cdots e_k$ becomes ${e_1}^{e_2} {e_2}^{e_3} \cdots {e_k}^{e_1}$.
\end{definition}

\begin{definition}\label{def:subs}
  Define the substitution rules $r_i$ for $i=0,1,2,3$ on a decorated cutting sequence:
  \begin{equation} \label{eqn:subs}
    r_0 = \begin{cases}
      \ar 12 \to \ar 21               \\
      \ar 21 \to \ar 12               \\
      \ar 23 \to \ar 12 \ar 23 \ar 34 \\
      \ar 32 \to \ar 43 \ar 32 \ar 21 \\
      \ar 34 \to \ar 43               \\
      \ar 43 \to \ar 34               \\
      \ar 45 \to \ar 34 \ar 45        \\
      \ar 54 \to \ar 54 \ar 43
    \end{cases}
    r_1 = \begin{cases}
      \ar 12 \to \ar 34 \ar 45               \\
      \ar 21 \to \ar 54 \ar 43               \\
      \ar 23 \to \ar 54 \ar 43 \ar 32 \ar 21 \\
      \ar 32 \to \ar 12 \ar 23 \ar 34 \ar 45 \\
      \ar 34 \to \ar 12 \ar 23 \ar 34        \\
      \ar 43 \to \ar 43 \ar 32 \ar 21        \\
      \ar 45 \to \ar 43 \ar 32               \\
      \ar 54 \to \ar   23 \ar 34
    \end{cases} r_2 = \begin{cases}
      \ar 12 \to \ar 43 \ar 32                \\
      \ar 21 \to \ar  23 \ar 34               \\
      \ar 23 \to \ar 23 \ar 34 \ar 45         \\
      \ar 32 \to \ar  54 \ar 43 \ar 32        \\
      \ar 34 \to \ar  54 \ar 43 \ar 32 \ar 21 \\
      \ar 43 \to \ar 12 \ar 23 \ar 34 \ar 45  \\
      \ar 45 \to \ar 12 \ar 23                \\
      \ar 54 \to \ar 32 \ar 21
    \end{cases} r_3 = \begin{cases}
      \ar 12 \to \ar 12 \ar 23        \\
      \ar 21 \to \ar  32 \ar 21       \\
      \ar 23 \to \ar 32               \\
      \ar 32 \to \ar 23               \\
      \ar 34 \to \ar 23 \ar 34 \ar 45 \\
      \ar 43 \to \ar 54 \ar 43 \ar 32 \\
      \ar 45 \to \ar 54               \\
      \ar 54 \to \ar 45
    \end{cases}
  \end{equation}
\end{definition}

\begin{lemma}\label{lem:subs}
  Applying the substitution rules (\ref{eqn:subs}) is equivalent to implementing the transition diagrams in Lemma \ref{lem:dptd}.
\end{lemma}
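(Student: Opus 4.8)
The plan is to recognize that Lemma~\ref{lem:subs} is a translation statement: it asserts that the substitution rules $r_i$ and the transition diagrams $\D(i)$ are two encodings of one and the same operation on cutting sequences, so the proof should reduce to checking that they agree arrow-by-arrow. First I would make the dictionary between the two languages precise. A decorated cutting sequence $\ldots \ar{e_1}{e_2}\,\ar{e_2}{e_3}\,\ar{e_3}{e_4}\ldots$ is exactly a bi-infinite walk on the directed graph $\D$: each decorated symbol $\ar ab$ is the directed arrow $a\to b$, and consecutive symbols chain because the exponent of one equals the base of the next. Under this dictionary, ``implementing the diagram $\D(i)$'' means reading off, for each arrow $a\to b$ of $\D$, the path in $\D(i)$ that it generates; this is precisely what the right-hand simplified diagrams in Lemma~\ref{lem:dptd} record, where the relabeled nodes give the image of each single edge under $\tilde{\sigma_i}$ and the arrow labels supply the intermediate edges crossed.

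Next I would observe that both operations are local: a cutting sequence is a concatenation of arrows, the decoration $\ar ab$ already carries the look-ahead needed to make a replacement deterministic, and both ``implement $\D(i)$'' and ``apply $r_i$'' act on each decorated symbol independently and then concatenate the outputs. Hence it suffices to check, for each of the eight directed arrows of $\D$ (namely $\ar 12,\ar 21,\ar 23,\ar 32,\ar 34,\ar 43,\ar 45,\ar 54$) and each of the four diagrams, that the string $r_i(\ar ab)$ coincides with the path that $\D(i)$ assigns to $a\to b$. This is a finite verification: for each diagram I would read the node relabeling to find the images of edges $a$ and $b$, insert the intermediate edges indicated by the label on the corresponding arrow in the order dictated by whether it is an upper or a lower arrow, and confirm the result matches the corresponding clause of~(\ref{eqn:subs}).

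Finally I would check compatibility at junctions, i.e. that the output is again a genuine decorated cutting sequence: whenever $\ar ab\,\ar bc$ occurs, the exponent of the last symbol of $r_i(\ar ab)$ must equal the base of the first symbol of $r_i(\ar bc)$. For instance, in $r_0$ the symbol $\ar 23$ produces $\ar 12\,\ar 23\,\ar 34$, ending in $\ar 34$, which chains correctly into the block of either admissible successor, since $r_0(\ar 34)=\ar 43$ and $r_0(\ar 32)=\ar 43\,\ar 32\,\ar 21$ both begin with base $4$; verifying this for all admissible pairs confirms that $r_i$ never breaks the walk structure. The main obstacle is not conceptual but bookkeeping: one must keep straight the permuted node labels in each $\D(i)$, match upper and lower arrows to the two traversal directions of each edge of $\D$, and thread the intermediate edge labels into the substitution in the correct order. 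Since Lemma~\ref{lem:dptd} has already established that the diagrams $\D(i)$ correctly describe the effect of $\tilde{\sigma_i}$ on cutting sequences, once the arrow-by-arrow agreement and the junction compatibility are verified, the claimed equivalence follows.
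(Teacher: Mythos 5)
Your proposal is correct and takes essentially the same approach as the paper: the paper's own proof is the one-line observation that each substitution rule is obtained by transcribing the diagram $\D(i)$, writing each node label with its arrow label as exponent, which is exactly the arrow-by-arrow finite verification you spell out. Your additional check of junction compatibility (that outputs chain into valid decorated sequences) is a worthwhile detail the paper leaves implicit, but it does not change the method.
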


In other words, applying the substitution rules (\ref{eqn:subs}) is equivalent to taking a decorated cutting sequence $c(\tau)$ on the double pentagon, where the associated cutting sequence is admissible on diagram $\D$, running the cutting sequence through diagram $\D(i)$ in Lemma \ref{lem:dptd}, collecting the letters to create a new cutting sequence, and then turning the cutting sequence into a decorated cutting sequence as described in Definition \ref{def:decorated}.

\begin{proof}
  For each diagram, we write down the node label, with the arrow label as exponent.
\end{proof}

\begin{remark}
  The literature on cutting sequences and transition diagrams contains a variety of different choices about how to symbolically represent a path on a transition diagram, and how to represent the operation that transforms one cutting sequence into another cutting sequence. John Smillie and Corinna Ulcigrai \cite{SU} used a ``dual'' transition diagram where arrow labels become nodes, and nodes become arrow labels. The first author, Irene Pasquinelli and Corinna Ulcigrai \cite{DPU} used substitutions on arrows, without using nodes or arrow labels.

  The symbol $a^b$ in the notation we use represents an arrow from $a$ to $b$, so the substitutions given in Definition~\ref{def:subs} are also substitutions on arrows. The current opinion of these and the current authors is that a direct substitution rule on arrows is the best way of expressing this type of operator. 

  Finally, we note that in the substitution rules in Definition~\ref{def:subs}, we have combined ``derivation'' and ``normalization'' (see \cite[\S 7]{DPU}) into one step. Namely, given a cutting sequence that corresponds to a path on transition diagram $\D$, running it through any of the substitutions $r_i$ yields a cutting sequence that also corresponds to a path on transition diagram $\D$, in a single step.
\end{remark}

\begin{remark}
Lemma \ref{lem:subs} combined with Theorem \ref{thm:abcd} give an algorithm for finding the cutting sequence $w$ corresponding to a given periodic direction $\vv$ on the double pentagon. The reverse operation is easier: First, let $\vv_i$ be the vector between the centers of pentagons glued along side $i$, chosen to tend in sector $(0,\pi/5)$ (Figure \ref{fig:edge_vectors}). Then we have

\begin{equation*}
\vv_1 = \bv{\cos\frac{3\pi}{10}}{\sin\frac{3\pi}{10}},\qquad
\vv_2 = \bv{\cos\frac{-3\pi}{10}}{\sin\frac{-3\pi}{10}},\qquad
\vv_3 = \bv{\cos\frac{-\pi}{10}}{\sin\frac{-\pi}{10}},\qquad
\vv_4 = \bv{\cos\frac{\pi}{10}}{\sin\frac{\pi}{10}},\qquad
\vv_5 = \bv{\cos\frac{5\pi}{10}}{\sin\frac{5\pi}{10}}.
\end{equation*}

\begin{figure}[!ht]
\centering
\includegraphics[height=120pt]{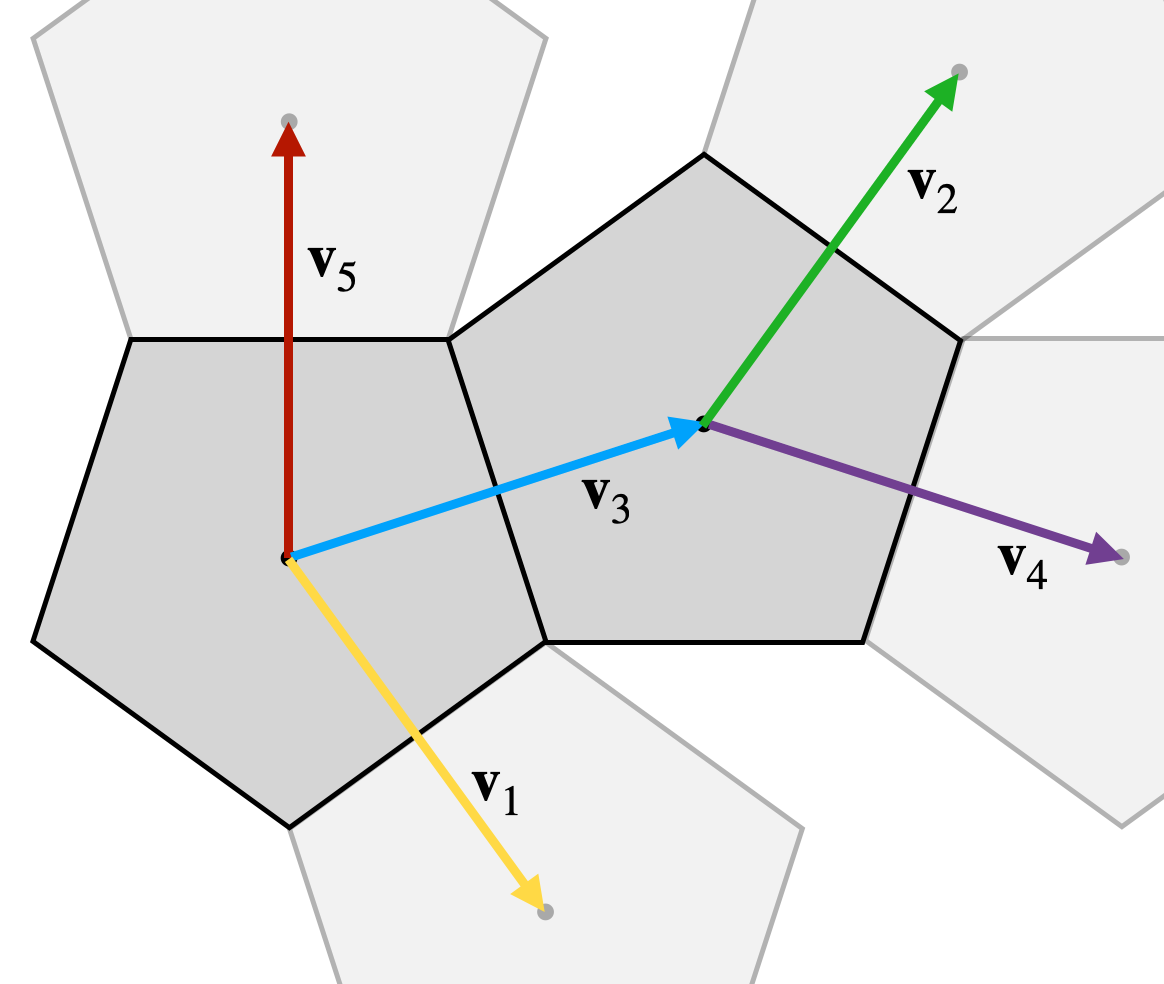}
\caption{Center-to-center vectors through each edge of the double pentagon \label{fig:edge_vectors}}
\end{figure}

Then given a cutting sequence $w \in \{1,2,3,4,5\}^*$ on the double pentagon, and letting $|w|_i$ count the number of $i$'s in $w$, we have:

\begin{equation*}
\vv =\frac{1}{\tan(\pi/5)}
|w|_1\cdot \vv_1 +
|w|_2\cdot \vv_2 +
|w|_3\cdot \vv_3+
|w|_4\cdot \vv_4+
|w|_5\cdot \vv_5.
\end{equation*}

Here $\cot(\pi/5)$ is twice the inradius of a regular pentagon with side length 1.
\end{remark}

\begin{definition}\label{def:count}
  For a given decorated periodic cutting sequence $\tau$ on the double pentagon, we count how many of the symbols are associated to each of the four pairs of arrows in the transition diagram. Specifically, define $$a_\tau = \#\{\ar 12, \ar 21\},\qquad d_\tau = \#\{\ar 23, \ar 32\}, \qquad b_\tau = \#\{\ar 34, \ar 43\}, \qquad c_\tau = \#\{\ar 45, \ar 54\}.$$
\end{definition}

The variable names are assigned in this non-alphabetical order because they turn out to correspond to the coefficients in the vector $\bv {a+b\phi}{c+d\phi}$; see Lemma \ref{lem:correspondence}.

\begin{lemma}
  \label{lem:rs-on-abcd}
  The effects of each $r_i$ on the lengths of cutting sequences, depending on the values of $a_\tau, b_\tau, c_\tau, d_\tau$, are as follows:
  \begin{align*}
    r_0 & : [a_\tau,b_\tau,c_\tau,d_\tau]\to [a_\tau + d_\tau, b_\tau + c_\tau + d_\tau, c_\tau, d_\tau]                                     \\
    r_1 & : [a_\tau,b_\tau,c_\tau,d_\tau]\to [b_\tau + d_\tau, a_\tau + b_\tau + c_\tau + d_\tau, a_\tau + d_\tau, b_\tau + c_\tau + d_\tau] \\
    r_2 & : [a_\tau,b_\tau,c_\tau,d_\tau]\to [b_\tau + c_\tau, a_\tau + b_\tau + d_\tau, b_\tau + d_\tau, a_\tau + b_\tau + c_\tau + d_\tau] \\
    r_3 & : [a_\tau,b_\tau,c_\tau,d_\tau]\to [a_\tau, b_\tau, b_\tau + c_\tau, a_\tau + b_\tau + d_\tau].
  \end{align*}
\end{lemma}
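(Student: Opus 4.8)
The plan is to prove this by direct bookkeeping on the substitution rules (\ref{eqn:subs}), organized so that the routine counting is packaged as a single linear statement. The key observation is that the counts $a_\tau, b_\tau, c_\tau, d_\tau$ are \emph{additive} over the cutting sequence: applying $r_i$ replaces each decorated symbol by a finite string, and the total count of each arrow type in the image is the sum, over all symbols of the input, of the count contributed by that symbol. Thus the effect of $r_i$ on $[a_\tau, b_\tau, c_\tau, d_\tau]$ is determined entirely by what $r_i$ does to each of the eight individual arrow symbols $\ar 12, \ar 21, \ar 23, \ar 32, \ar 34, \ar 43, \ar 45, \ar 54$.

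First I would record, for each arrow symbol $x$ and each rule $r_i$, the image $r_i(x)$ from (\ref{eqn:subs}) together with its \emph{abelianization}, the vector $\big[\#\{\ar 12,\ar 21\},\ \#\{\ar 34,\ar 43\},\ \#\{\ar 45,\ar 54\},\ \#\{\ar 23,\ar 32\}\big]$ counting each pair-type appearing in the string $r_i(x)$, in the order $(a,b,c,d)$ used in Definition~\ref{def:count}. The crucial structural fact to verify is that within each pair the two opposite arrows produce \emph{equal} abelianizations. For instance, under $r_0$ both $\ar 12 \to \ar 21$ and $\ar 21 \to \ar 12$ contribute a single type-$a$ arrow, while both $\ar 23 \to \ar 12\,\ar 23\,\ar 34$ and $\ar 32 \to \ar 43\,\ar 32\,\ar 21$ contribute one each of types $a$, $b$, $d$. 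Once this symmetry is checked for all four rules, the vector $[a_\tau, b_\tau, c_\tau, d_\tau]$ is a well-defined coordinate, independent of arrow orientation, and each $r_i$ descends to a linear map $M_i$ on it.

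Then I would read off $M_i$ column by column: the column indexed by $a$ is the common abelianization of $r_i(\ar 12)$ and $r_i(\ar 21)$, and similarly for $b$, $c$, $d$. Assembling these columns reproduces the four displayed formulas; e.g.\ for $r_0$ the columns are $(1,0,0,0)$, $(0,1,0,0)$, $(0,1,1,0)$, $(1,1,0,1)$, giving $[a,b,c,d]\to[a+d,\,b+c+d,\,c,\,d]$ as stated. As a consistency check, the sum of the entries in each column equals the length of the corresponding image string, so that the total $a_\tau+b_\tau+c_\tau+d_\tau$ transforms by the correct overall expansion factor of the substitution.

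The hard part is not conceptual but the sheer volume of careful casework: there are $4\times 8=32$ substitution outputs to parse, and a single misreading of an arrow$-$for example confusing $\ar 34$ with $\ar 43$ (same pair) versus $\ar 32$ (a different pair)$-$propagates directly into an error in $M_i$. The surest way to control this is to verify the per-pair symmetry explicitly for every rule, since it simultaneously validates the well-definedness of the four-variable description and halves the number of independent entries that must be computed.
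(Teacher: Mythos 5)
Your proposal is correct and takes essentially the same approach as the paper's: both are direct counting arguments on the substitution rules of Definition~\ref{def:subs}, packaging the counts as the linear maps in the statement (and your example columns for $r_0$ check out, as do the other three rules). The only difference is organizational --- you tabulate, for each \emph{input} arrow, the pair-counts of its image (the columns of the matrix) and explicitly verify that opposite arrows yield equal counts, which establishes well-definedness for arbitrary words, whereas the paper counts, for each \emph{output} pair-type, which input types produce it (the rows of the same matrix), leaving that symmetry implicit.
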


\begin{proof}
  Given a short periodic trajectory $\tau$ and its corresponding cylinder vector $v$, and cutting sequence $s$, given $i \in \{0, 1, 2, 3\}$, let $v' = \sigma_i(v)$ and $\tau'$ the short periodic trajectory in direction $v'$, and $s'$ the corresponding cutting sequence.

  Staring at the definition of the $r_i$'s in Definition~\ref{def:subs},
  we just need to count, for each letter $\ar 12$, $\ar 21$, $\ar 23$, $\ar 32$, $\ar 34$, $\ar 43$, $\ar 45$, $\ar 54$, how many of its appearances on the right hand side come from each type on the left-hand side.

  For example, for $r_0$, letters $\ar 12$ and $\ar 21$ come from letters $\ar 12$, $\ar 21$, $\ar 23$, $\ar 32$, consequently, \mbox{$a_{\tau'} = a_\tau + d_\tau$}.
\end{proof}

\subsection{Periods of periodic double pentagon trajectories}\label{sec:periods}

In this section we answer a simple question: Given a periodic direction on the double pentagon surface or on the pentagonal billiard table, what are the corresponding periods? The analogous result is well known for the square; a trajectory with slope $p/q$ on the square torus has period $p+q$,  and a trajectory with slope $p/q$ on the square billiard table has period $2(p+q)$ (Theorem \ref{thm:square}).

%
%

It turns out that the answer for the double pentagon is the same as for the square, if we set up our definitions correctly. First we must establish periodic directions:

\begin{lemma}
  Periodic directions in the double pentagon and in the pentagonal billiard table are exactly the images of the directions in $\qrf$ under the matrix $P$.
\end{lemma}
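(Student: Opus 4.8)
The plan is to reduce everything to the golden L, where the periodic directions are already identified, and then transport the answer through the affine equivalence $P$ and the unfolding correspondence. The single external input I would invoke is the result cited in \S\ref{sec:tree} (\cite{arnoux}): a direction on the golden L is periodic if and only if its slope lies in $\qrf$; equivalently, the periodic directions on the golden L are exactly the directions with slope in $\qrf$ (including the vertical, slope $\infty$).

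For the double pentagon, I would apply Corollary~\ref{cor:transform-periodicity} directly. It states that $\vecv$ is periodic on the double pentagon if and only if $P^{-1}\vecv$ is periodic on the golden L, which by the previous paragraph holds if and only if $P^{-1}\vecv$ has slope in $\qrf$. Setting $\mathbf{w} = P^{-1}\vecv$, this says exactly that $\vecv = P\mathbf{w}$ with $\mathbf{w}$ a direction in $\qrf$. Hence the periodic directions on the double pentagon are precisely the $P$-images of directions in $\qrf$, which is the lemma for the double pentagon.

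To reach the pentagon billiard table I would pass through the necklace of \S\ref{subsec:necklace}. The standard unfolding correspondence sends a periodic billiard trajectory in a given direction to a closed straight-line trajectory in the same direction on the necklace and conversely, so the periodic billiard directions are exactly the periodic directions of the necklace. Since the necklace is a $5$-fold translation cover of the double pentagon and the covering map is a local isometry carrying the linear flow to the linear flow in the same direction, a trajectory downstairs is closed precisely when its lift upstairs is closed (the lift closes after at most $5$ loops), and the projection of a closed trajectory is closed. Thus a direction is periodic on the necklace if and only if it is periodic on the double pentagon, and combining with the previous paragraph gives the billiard statement.

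The only genuinely delicate point is this last step: verifying that periodicity of a direction is neither created nor destroyed on passing between the double pentagon and its $5$-fold cover. The cleanest way to secure it is to note that both surfaces are lattice (Veech) surfaces, so by the Veech dichotomy every direction is either completely periodic or minimal; since finite covers preserve both minimality and the presence of a single closed trajectory, the completely periodic directions of the two surfaces coincide. Everything else is a direct application of Corollary~\ref{cor:transform-periodicity} and the unfolding dictionary.
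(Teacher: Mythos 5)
Your proof is correct, and it follows the same basic route as the paper's: transport the problem to the golden L through $P$ and use the characterization of golden L periodic directions as those with slope in $\qrf$. The differences are in the ingredients and in completeness. The paper's proof re-runs the cut-and-paste argument at the level of saddle connections (a periodic direction on the double pentagon gives a saddle connection vector $\mathbf{v}_P$, whose image $P^{-1}\mathbf{v}_P$ is a saddle connection on the golden L), and then gets the forward inclusion from its own Theorem~\ref{thm:abcd} --- saddle connection vectors have coordinates in $\mathbf{Z}[\phi]$, hence slopes in $\qrf$ --- reserving the citation of \cite{arnoux} for the converse inclusion only. You instead quote Corollary~\ref{cor:transform-periodicity} as a black box (which encapsulates the same cut-and-paste, already proved in the paper) and use \cite{arnoux} in both directions; this is more modular but leans more heavily on the external reference where the paper partially substitutes its own result. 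The more substantive difference is that you explicitly prove the billiard-table half of the statement, via the unfolding to the necklace and the observation that periodicity of a direction passes between the double pentagon and its $5$-fold translation cover --- secured either by the finite-cover argument (lifts of closed trajectories close after at most five loops, projections of closed trajectories are closed) or by the Veech dichotomy. The paper's proof is silent on the billiard table, leaving that step implicit in the unfolding discussion of \S\ref{subsec:necklace}; your treatment fills that gap, and your appeal to the Veech dichotomy also cleanly handles the distinction between ``has a closed trajectory'' and ``completely periodic,'' which is otherwise easy to elide.
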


\begin{proof}
  A periodic direction in the double pentagon corresponds to a saddle connection vector $\mathbf{v}_P$ on the double pentagon. The image $P^{-1} \mathbf{v}_P$ of this vector on the golden L is a saddle connection on the golden L; see Figure \ref{fig:cutandpaste}. By Theorem \ref{thm:abcd}, saddle connection vectors on the golden L have components in $\Z[\phi]$, so their slopes are in $\qrf$. Going in the other direction, any direction in $\qrf$ is a saddle connection direction on the golden L, for some saddle connection vector $\mathbf{v}_L$, and the image $P \mathbf{v}_L$ of this vector on the double pentagon is a saddle connection on the double pentagon, and hence a periodic direction.
\end{proof}


\begin{theorem}\label{thm:saddle-connection}
  Given a vector $\vv\in\Sigma$ parallel to a periodic trajectory on the golden L, there are two cylinders of periodic trajectories on the L, one short and one long, in this direction. The short one is in $\Lambda$, and the long one is in $\phi\Lambda$. The one in $\Lambda$ is $\ell^{-1}\vv$, where $\ell = \ell_\vv$ from Proposition \ref{prop:terminates}. It lives in $\Z[\phi]^2$, i.e. it is of the form $\sv{a+b\phi}{c+d\phi}$ with $a,b,c,d$ nonnegative integers and $(a,b)\neq (0,0)$.
\end{theorem}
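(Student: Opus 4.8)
The statement consolidates several facts proved above, so my plan is to assemble them and carefully track the bookkeeping of the four relevant vectors (short and long cylinder vectors, short and long saddle connection vectors). The existence of exactly two cylinders, one short and one long, in each periodic direction, together with their length ratio $\phi$, is part of Definition~\ref{def:goldenL}. By the definition of $\Lambda$, the short cylinder vector coincides with the long saddle connection vector and lies in $\Lambda$; this is the vector I will identify explicitly.

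First I would apply Proposition~\ref{prop:scvectors} to the periodic direction $\vv$: it gives the long saddle connection vector as $\vv/\ell_\vv = \ell^{-1}\vv$, which is exactly the short cylinder vector. This proves the formula in the statement, and since $\ell^{-1}\vv$ is itself a long saddle connection vector in $\Sigma$, it lies in $\Lambda$. For the long cylinder, I would use that the long cylinder vector is the sum of the long and short saddle connection vectors; since the short saddle connection vector is $\bar\phi$ times the long one (Proposition~\ref{prop:scvectors}) and $1+\bar\phi=\phi$, the long cylinder vector equals $\phi\,(\ell^{-1}\vv)$, hence lies in $\phi\Lambda$.

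Finally, the arithmetic form follows from Theorem~\ref{thm:abcd}: the long saddle connection vector in direction $\vv$ is $\sigma_{k_0}\cdots\sigma_{k_n}\sv10$, which has coordinates $[a+b\phi,c+d\phi]$ with $a,b,c,d$ nonnegative integers, so $\ell^{-1}\vv\in\mathbf{Z}[\phi]^2$. The condition $(a,b)\neq(0,0)$ is forced by membership in $\Sigma$: the first coordinate $a+b\phi$ must be strictly positive, which rules out $a=b=0$.

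There is no serious obstacle here beyond careful bookkeeping; the only place to be attentive is in keeping the four vectors distinct and using the correct powers of $\phi$, in particular the identity $1+\bar\phi=\phi$ that converts the saddle-connection sum into the factor $\phi$ relating the two cylinder vectors.
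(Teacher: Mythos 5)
Your proposal is correct and takes essentially the same approach as the paper, whose entire proof is the single sentence that the theorem ``is a rephrasing of Proposition~\ref{prop:scvectors}.'' You have simply spelled out that rephrasing explicitly---invoking Definition~\ref{def:goldenL} for the two cylinders, Proposition~\ref{prop:scvectors} for $\ell^{-1}\vv$, the identity $1+\bar\phi=\phi$ for the long cylinder, and Theorem~\ref{thm:abcd} for the arithmetic form---which is exactly the bookkeeping the paper leaves implicit.
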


\begin{proof}
  This is a rephrasing of Proposition \ref{prop:scvectors}.
\end{proof}

We now give an expression for the period of a trajectory on the double pentagon, corresponding to a given periodic direction:

\begin{theorem}[Combinatorial Period Theorem]\label{thm:double}
  Given a vector $\vv = \sv{a+b\phi}{c+d\phi}\in\Lambda$, the corresponding short trajectory has holonomy vector $\vv$ and the long trajectory has holonomy vector $\phi\vv$. The combinatorial periods of the corresponding trajectories in the double pentagon are $2(a+b+c+d)$ and $2(a+2b+c+2d)$, respectively.
\end{theorem}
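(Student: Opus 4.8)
The plan is to read the combinatorial period directly off the cutting sequence, and then to translate the length of that cutting sequence into the coefficients of the holonomy vector using the substitution rules. The combinatorial period of a periodic trajectory is by definition the number of edges it crosses in one period, i.e.\ the length of its decorated cutting sequence. Every arrow of the transition diagram $\D$ is one of the eight symbols tallied in Definition~\ref{def:count}, so this length is exactly $a_\tau + b_\tau + c_\tau + d_\tau$. The theorem therefore reduces to computing these four counts from the holonomy coefficients $a,b,c,d$.

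The key step is a correspondence between the two bookkeeping systems, which I would prove by induction along the tree of Theorems~\ref{thm:saddles} and~\ref{thm:abcd}. The claim is that for a cylinder core curve whose reduced holonomy vector is $[A+B\phi,\,C+D\phi]$ one has $(a_\tau,b_\tau,c_\tau,d_\tau)=2(A,B,C,D)$. The inductive step is a direct computation: multiplying $\sigma_i$ into $[A+B\phi,\,C+D\phi]$ and reducing with $\phi^2=1+\phi$ transforms $(A,B,C,D)$ by precisely the integer-linear map that $r_i$ applies to $(a_\tau,b_\tau,c_\tau,d_\tau)$ in Lemma~\ref{lem:rs-on-abcd}. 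For example $\sigma_0$ sends $(A,B,C,D)$ to $(A+D,\,B+C+D,\,C,\,D)$, matching $r_0$ exactly, and the same holds for $\sigma_1,\sigma_2,\sigma_3$. Because Lemma~\ref{lem:subs} guarantees that $r_i$ realizes the geometric action of $\sigma_i$ on cutting sequences, the holonomy coefficients and the cutting-sequence counts evolve in lockstep, so their ratio is constant along the tree. The base case $\vv=\sv 10$, whose short core curve I would check by hand to have counts $(2,0,0,0)$, fixes this ratio at $2$.

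Finally I would apply the correspondence to the two cylinders in the given direction. For the short trajectory the holonomy is $\vv=[a+b\phi,c+d\phi]$, so the period is $a_\tau+b_\tau+c_\tau+d_\tau=2(a+b+c+d)$. For the long trajectory the holonomy is $\phi\vv$, which reduces via $\phi^2=1+\phi$ to $[\,b+(a+b)\phi,\;d+(c+d)\phi\,]$; this is the core curve of the long cylinder, reached by the parallel tree based at $\sv\phi 0$ with the identical inductive step, so the same correspondence gives period $2\bigl(b+(a+b)+d+(c+d)\bigr)=2(a+2b+c+2d)$.

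The main obstacle is setting up the correspondence cleanly: one must verify that all four maps $\sigma_i$ act on $(A,B,C,D)$ exactly as the $r_i$ act on the counts, and must pin down the two base cases by explicit inspection of the horizontal cylinders on the double pentagon. Once the $\sigma_i\leftrightarrow r_i$ matching and the base counts are in hand, everything else is formal.
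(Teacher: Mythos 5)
Your proposal is correct and follows essentially the same route as the paper: the correspondence $(a_\tau,b_\tau,c_\tau,d_\tau)=2(a,b,c,d)$ that you prove inline by matching the action of each $\sigma_i$ on coefficients with the action of each $r_i$ on symbol counts is exactly the paper's Lemma~\ref{lem:correspondence}, whose proof compares Lemmas~\ref{lem:rs-on-abcd} and~\ref{lem:sigmas-on-abcd} by induction along the tree. Your treatment of the long trajectory (base case the horizontal long cylinder, cutting sequence $\ar 34 \ar 43$ with counts $(0,2,0,0)$ matching $\phi\cdot\sv 10$, then the identical induction) is also the paper's argument, so the only remaining work you flag---checking the two horizontal base cases by hand---is precisely what the paper does.
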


\begin{example}\label{example:double}
  To determine the (periodic) word corresponding to a given direction $\vv$, we use the (finite) sequence of sectors $a_\vv = \{k_0, k_1, \ldots,k_n\}$ to work backwards. The word corresponding to the direction $[1,0]$ is $w_n = \ar 12 \ar 21$ (see Figure \ref{four-dp}). We run this word through the transition diagram corresponding to $k_n$, to obtain the previous word $w_{n-1}$, and so on. For example, if $a_\vv = \{1,2,0\}$, to get the short trajectories, we run $w_3 = \ar 12 \ar 21$ through $r_1$,  run the result through $r_2$, and run the result of that through $r_0$, obtaining the sequences
  \begin{align*}
    w_3 & = \ar 12 \ar 21                                                                                                                                             \\
    w_2 & =\ar 34 \ar 45 \ar 54 \ar 43                                                                                                                                \\
    w_1 & =\ar 54 \ar 43 \ar 32 \ar 21 \ar 12 \ar 23 \ar 32 \ar 21 \ar 12 \ar 23 \ar 34 \ar 45                                                                        \\
    w_0 & = \ar 54 \ar 43 \ar 34 \ar 43 \ar 32 \ar 21 \ar 12 \ar 21 \ar 12 \ar 23 \ar 34 \ar 43 \ar 32 \ar 21 \ar 12 \ar 21 \ar 12 \ar 23 \ar 34 \ar 43 \ar 34 \ar 45
  \end{align*}
  in turn.

  We also use $a_\vv$ to find the direction. We compute $\sigma_0 \sigma_2 \sigma_1 \sv 10$, remembering that $\phi^2 = \phi+1$.
  \begin{align*}
    \sigma_0 \sigma_2 \sigma_1 \sv 10
     & = \bm 1 \phi 0 1 \bm \phi 1 \phi \phi \bm \phi \phi 1 \phi \bv 10 \\
     & = \bm 1 \phi 0 1 \bm \phi 1 \phi \phi  \bv \phi 1                 \\
     & =  \bm 1 \phi 0 1   \bv {\phi+2} {2\phi+1}                        \\
     & = \bv {4\phi+4}{2\phi+1}.
  \end{align*}
  We show the intermediary steps above because they give us the direction corresponding to each word: $w_3=12$ corresponds to direction $\sv 10$, \\ $w_2=3454$ corresponds to direction $\sv \phi 1$, \\ $w_1=543212321234$ corresponds to direction $\sv {\phi+2} {2\phi+1}$,  and \\ $w_0 = 5434321212343212123434$ corresponds to direction $\sv {4\phi+4}{2\phi+1}$.

  We have given undecorated sequences for ease of reading and counting. The reader can check that at each step, the length of the period is twice the sum of $a,b,c$, and $d$ in the vector $\sv {a\phi+b}{c\phi+d}$.
\end{example}

Now we develop some preliminary results that will help us to prove Theorem \ref{thm:double}.

\begin{lemma}\label{lem:buddies}
  Every decorated periodic cutting sequence consists of equal numbers of $a^b$ and $b^a$ for each $a,b\in\{1,2,3,4,5\}$.
\end{lemma}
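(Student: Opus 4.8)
The plan is to interpret a periodic decorated cutting sequence as a closed walk on the transition diagram $\D$ and then exploit the fact that $\D$ is a tree. Recall that for $\theta\in[0,\pi/5)$ the diagram $\D$ of Definition~\ref{def:transition} has vertex set $\{1,2,3,4,5\}$, with an arrow in each direction between $i$ and $i+1$ for $i=1,2,3,4$ and no other arrows; as an undirected graph this is the path $1-2-3-4-5$, hence a tree. A decorated symbol $a^b$ is precisely a directed traversal of the edge $\{a,b\}$ of $\D$ from $a$ to $b$, and admissibility means that every consecutive pair of symbols is such a traversal. Since $\tau$ is periodic and, being a cylinder trajectory, hits no corner, one full period of its cutting sequence is a cyclic word, i.e. a closed walk $W$ on $\D$, and the multiset of its symbols is exactly the multiset of directed edges traversed by $W$.

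First I would dispose of the non-adjacent pairs: if $\{a,b\}$ is not an edge of $\D$, then neither $a^b$ nor $b^a$ occurs in an admissible sequence, so both counts are $0$ and the assertion is vacuous. It therefore suffices to treat the four edges $\{i,i+1\}$ of the path.

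Next I would apply the standard cut argument for closed walks on trees. Fix $i\in\{1,2,3,4\}$ and delete the edge $\{i,i+1\}$ from $\D$; this disconnects the tree into the two vertex sets $L_i=\{1,\dots,i\}$ and $R_i=\{i+1,\dots,5\}$, and $\{i,i+1\}$ is the unique edge of $\D$ joining them. Consequently every step of $W$ passing from $L_i$ to $R_i$ must be the symbol $i^{\,i+1}$, and every step from $R_i$ to $L_i$ must be $(i+1)^{\,i}$. Because $W$ is closed, it returns to its starting vertex after one period, so the number of crossings from $L_i$ to $R_i$ equals the number of crossings back; that is, $\#\{i^{\,i+1}\}=\#\{(i+1)^{\,i}\}$. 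Running over $i=1,2,3,4$ gives the four required equalities, which in the notation of Definition~\ref{def:count} say that each of $a_\tau,b_\tau,c_\tau,d_\tau$ splits evenly between its two constituent symbols.

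There is no serious obstacle here: the content is the elementary observation that a closed walk on a tree is balanced across every edge-cut. The only points requiring a little care are formalizing that periodicity of $\tau$ makes one period a genuine closed (cyclic) walk, and noting that a cylinder trajectory avoids the corners, so that the cutting sequence is honestly bi-infinite and periodic. Both are immediate from the definitions, so the argument is essentially complete once $\D$ is recognized as a tree.
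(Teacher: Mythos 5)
Your proof is correct, and at its core it is the same argument as the first of the two proofs the paper gives for this lemma: a surplus of $a^b$ over $b^a$ is impossible because the closed path on the transition diagram must return to where it started. What you add is the observation that makes this argument actually airtight. In a general directed graph, a closed walk can perfectly well traverse the arrow $a\to b$ more often than $b\to a$ (it may return through other vertices), so the paper's one-line contradiction is only valid because $\D$ is, as an undirected graph, the path $1-2-3-4-5$, a tree, in which $\{i,i+1\}$ is a cut edge separating $\{1,\dots,i\}$ from $\{i+1,\dots,5\}$. Your edge-cut balancing argument is exactly the right formalization of this, and it is a genuine (if small) improvement in rigor over the paper's phrasing. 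Note also that the paper gives a second, inductive proof: push the seed sequence $1^2 2^1$ through the substitutions $r_i$ and check that $r_i(a^b)$ is the reversal of $r_i(b^a)$ with bases and exponents swapped, so the pairing of symbols is preserved at every stage. That proof is longer but buys more, namely the palindrome properties recorded in Corollary~\ref{cor:palindrome}, which your cut-counting argument does not recover.
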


\begin{proof}[Proof of Lemma \ref{lem:buddies} by contradiction]
  If the cutting sequence contained $n$ $\ar ab$s and fewer than $n$ $\ar ba$s, this would mean that the associated path on the transition diagram goes at least once from $a$ to $b$, without going back from $b$ to $a$, contradicting the fact that the path returns to where it started.
\end{proof}

\begin{proof}[Proof of Lemma \ref{lem:buddies} by induction]
  Every (short) periodic sequence can be obtained via a series of substitutions $r_i$ (see Equation (\ref{eqn:subs})) applied to the periodic sequence $(\ar 12 \ar 21)$. For each $i$, $r_i(\ar ab)$ is the same as $r_i(\ar ba)$ but in the reverse order and with the base and exponent reversed, so that each $\ar cd$ in the $i^{\text{th}}$ position in $r_i(\ar ab)$ is matched with $\ar dc$ in the $i^{\text{th}}$ position from the end in $r_i(\ar ba)$. Since the original sequence $(\ar 12 \ar 21)$ consists of pairs of this form, and each substitution preserves this property, every cutting sequence consists of pairs of this form.
\end{proof}

The inductive proof gives an additional result:

\begin{corollary}\label{cor:palindrome}
  Every periodic cutting sequence has several palindromic properties: It reads the same in the reverse order if
  \begin{enumerate}
    \item  the base and exponent are reversed, or if
    \item bases and exponents are treated the same, or if
    \item we ignore the exponents and remove the first digit (using the order given by applying the substitutions (\ref{eqn:subs}) to $1^2 2^1$ or $3^4 4^3$).
  \end{enumerate}
\end{corollary}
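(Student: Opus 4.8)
The plan is to show that all three statements are consequences of a single \emph{exact} symmetry of the decorated cutting sequence, a symmetry that is essentially the one already isolated in the inductive proof of Lemma~\ref{lem:buddies}. For a decorated word $W = s_1 s_2 \cdots s_L$, define the \emph{reverse-and-swap} $\widetilde{W} = \bar s_L \cdots \bar s_2\, \bar s_1$, where $\bar s$ denotes the symbol $s$ with its base and exponent interchanged (so $\overline{\ar ab} = \ar ba$). First I would record the key single-symbol fact, which is exactly what the inductive proof of Lemma~\ref{lem:buddies} observes: for each $i$ and each decorated symbol $\ar ab$ one has $r_i(\ar ba) = \widetilde{r_i(\ar ab)}$. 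This is a finite check on the eight symbols in each of the four rules~(\ref{eqn:subs}).

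Next I would promote this to words. Since each $r_i$ acts by concatenating the images of successive symbols, and $\widetilde{UV} = \widetilde V\,\widetilde U$, the single-symbol identity gives $\widetilde{r_i(W)} = r_i(\widetilde W)$ for every word $W$; that is, every substitution commutes with reverse-and-swap. Because the seed word $\ar 12 \ar 21$ satisfies $\widetilde{(\ar 12 \ar 21)} = \ar 12 \ar 21$ exactly, and because every periodic cutting sequence is obtained from the seed by a sequence of substitutions $r_i$ (the generation used in the proof of Lemma~\ref{lem:buddies}), an immediate induction yields $\widetilde W = W$ exactly for every periodic cutting sequence $W$. This is precisely statement (1).

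I would then read off (2) and (3) from the identity $\widetilde W = W$. Writing the underlying cutting sequence as $e_1 e_2 \cdots e_L$, so that $s_j = \ar{e_j}{e_{j+1}}$ with cyclic indices, the identity $\widetilde W = W$ unwinds to $e_j = e_{L+2-j}$ for all $j$. This says the undecorated sequence is a cyclic palindrome, which gives (2) whether one reads the bare edge labels or flattens each $\ar ab$ into the two digits $a,b$. Moreover $j=1$ is a fixed point of $j \mapsto L+2-j$, so position $1$ lies on a symmetry axis; deleting the first label then leaves $e_2 e_3 \cdots e_L$, which $e_j = e_{L+2-j}$ forces to be a genuine linear palindrome, giving (3). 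Since $L$ is even (Theorem~\ref{thm:double}), the map $j \mapsto L+2-j$ has a second fixed point at $j = L/2+1$; seeding the generation from $\ar 34 \ar 43$ instead selects this other axis, which is the parenthetical alternative in the statement.

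The only real work is the bookkeeping in the first step: verifying $r_i(\ar ba) = \widetilde{r_i(\ar ab)}$ across all eight symbols and all four substitutions. This is routine but must be done with care, since a single transcription slip in~(\ref{eqn:subs}) would break the commutation. After that, the one subtlety is to track \emph{exact} equality rather than equality up to cyclic rotation, because (3) depends on the first symbol genuinely lying on a symmetry axis; the exactness is exactly what the symbol-level base case $\widetilde{(\ar 12 \ar 21)} = \ar 12 \ar 21$ and the clean commutation buy us.
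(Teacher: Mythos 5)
Your proposal is correct and takes essentially the same route as the paper: the paper deduces Corollary~\ref{cor:palindrome} directly from the inductive proof of Lemma~\ref{lem:buddies}, whose key observation is precisely your symbol-level commutation $r_i(\ar ba) = \widetilde{r_i(\ar ab)}$, propagated by induction from the reverse-and-swap-invariant seeds $\ar 12 \ar 21$ and $\ar 34 \ar 43$. Your write-up simply makes explicit the unwinding of the exact symmetry $\widetilde{W}=W$ into statements (1)--(3) (including the role of the fixed points of $j\mapsto L+2-j$), which the paper leaves implicit.
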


For example, the periodic cutting sequence $543212321234$ from Example \ref{example:double} can be rewritten in decorated form as $\ar 54 \ar 43 \ar 32 \ar 21 \ar12 \ar 23 \ar 32 \ar 21 \ar 12 \ar 23 \ar 34 \ar 45$, showing these properties.

\begin{lemma}
  \label{lem:sigmas-on-abcd}
  The matrices $\sigma_i$ have the following effects on a vector in $\R^2$ with entries in $\mathbf{Z}[\phi]^2$:
  \begin{align*}
    \sigma_0 \bv {a+b\phi}{c+d\phi} & = \bv {(a+d)+(b+c+d)\phi}{c+d\phi},             \\
    \sigma_1 \bv {a+b\phi}{c+d\phi} & = \bv {(b+d)+(a+b+c+d)\phi}{(a+d)+(b+c+d)\phi}, \\
    \sigma_2 \bv {a+b\phi}{c+d\phi} & =\bv {(b+c)+(a+b+d)\phi}{(b+d)+(a+b+c+d)\phi},  \\
    \sigma_3 \bv {a+b\phi}{c+d\phi} & = \bv {a+b\phi}{(b+c)+(a+b+d)\phi}.
  \end{align*}
\end{lemma}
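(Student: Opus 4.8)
The plan is to prove Lemma~\ref{lem:sigmas-on-abcd} by direct computation, in the spirit of the parenthetical remark accompanying Lemma~\ref{lem:transpose}. Each matrix $\sigma_i$ from Definition~\ref{def:sigmas} has all of its entries in $\{0,1,\phi\}$, so multiplying $\sigma_i$ against the column vector $\bv{a+b\phi}{c+d\phi}$ produces, in each coordinate, a $\mathbf{Z}$-linear combination of $a,b,c,d,a\phi,b\phi,c\phi,d\phi$ together with at most two terms of the form $(b\phi)\cdot\phi$ or $(d\phi)\cdot\phi$, i.e.\ terms carrying a factor $\phi^2$. These $\phi^2$ terms are exactly where the $\phi$-entries of $\sigma_i$ act on the $\phi$-parts of the input, and they are the only source of anything beyond a plain linear combination.

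The single substantive step is the reduction $\phi^2=\phi+1$. Wherever a product $e\phi^2$ appears (with $e$ equal to $b$, $d$, or a sum of the four variables), I rewrite it as $e+e\phi$; this is what simultaneously deposits an integer contribution $e$ into the first (rational) slot and a $\phi$-contribution $e$ into the second slot. After performing this substitution in each coordinate, I collect the total $\mathbf{Z}$-coefficient of $1$ and the total $\mathbf{Z}$-coefficient of $\phi$ and read off the stated answer, observing in passing that all coefficients remain nonnegative, so the outputs indeed lie in $\mathbf{Z}[\phi]^2$ as in Theorem~\ref{thm:abcd}.

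Concretely, I would carry out one case in full and note that the remaining three are identical in spirit. For $\sigma_0=\sm 1\phi 01$, the bottom coordinate is unchanged at $c+d\phi$, while the top coordinate is $(a+b\phi)+\phi(c+d\phi)=a+b\phi+c\phi+d\phi^2=(a+d)+(b+c+d)\phi$ after the substitution $\phi^2=\phi+1$; this is exactly the claimed formula. Symmetrically, for $\sigma_3=\sm 10\phi 1$ only the bottom coordinate changes, and for $\sigma_1=\sm\phi\phi1\phi$ and $\sigma_2=\sm\phi1\phi\phi$ each of the two coordinates acquires a $\phi^2$ term that is reduced by the same identity. The bookkeeping is routine, differing between the four cases only in which entries are $\phi$ versus $1$.

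There is no genuine obstacle here; the only place that demands care is tracking the $\phi^2=\phi+1$ reduction, since it is precisely this step that produces the ``$+1$'' shifts, such as the extra $+d$ that appears in the integer part of the top coordinate of $\sigma_0$ alongside the $+d$ in its $\phi$-part. As an independent check of the arithmetic, I would flag that these four maps on $(a,b,c,d)$ coincide exactly with the four linear maps recorded in Lemma~\ref{lem:rs-on-abcd}: the rational and $\phi$-coefficients computed here reproduce, slot for slot, the counts produced by the substitutions $r_i$. This agreement is no accident---it reflects the correspondence between cutting-sequence letter counts and $\mathbf{Z}[\phi]$-coordinates---but I would present it only as a consistency check, since establishing that correspondence relies on the present lemma rather than the other way around.
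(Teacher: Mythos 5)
Your proposal is correct and takes essentially the same approach as the paper, whose entire proof is ``This is easily checked by multiplying out the matrices\ldots and applying the identity $\phi^2 = 1+\phi$''---exactly the computation you carry out (correctly) for $\sigma_0$ and sketch for the other three cases. Your decision to treat the agreement with Lemma~\ref{lem:rs-on-abcd} as a consistency check rather than a proof ingredient is also the right call, since the paper's Lemma~\ref{lem:correspondence} derives that correspondence \emph{from} the present lemma, and using it in the other direction would be circular.
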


\begin{proof}
  This is easily checked by multiplying out the matrices on the left hand side (each matrix $\sigma_i$ is given in Definition \ref{def:sigmas}), and applying the identity $\phi^2 = 1+\phi$.
\end{proof}

\begin{lemma}\label{lem:correspondence}
  %
  %
  Let $\tau$ be a short periodic trajectory on the double pentagon and $\mathbf{v} = \bv {a + b \phi}{c + d \phi}$ be the corresponding short saddle connection vector.  Let $a_\tau, b_\tau, c_\tau, d_\tau$ be the number of pairs of types of letters in the cutting sequence as described in Definition \ref{def:count}.
  %
  %
  Then $(a_\tau, b_\tau, c_\tau, d_\tau) = (2a, 2b, 2c, 2d)$.
\end{lemma}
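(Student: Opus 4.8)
The plan is to induct on the depth of $\mathbf{v}$ in the tree of Theorem~\ref{thm:saddles}, exploiting the fact that the combinatorial operation $r_i$ on letter-counts and the algebraic operation $\sigma_i$ on coefficient quadruples are literally the same linear map. By Theorem~\ref{thm:saddles} every $\mathbf{v}\in\Lambda$ is $\sigma_{k_0}\cdots\sigma_{k_n}\sv 10$, the root being the horizontal vector $\sv 10=\bv{1+0\phi}{0+0\phi}$, so there $(a,b,c,d)=(1,0,0,0)$. The short trajectory in the horizontal direction has decorated cutting sequence $\ar 12\ar 21$ (Example~\ref{example:double}), containing one $\ar 12$, one $\ar 21$, and nothing else, so $(a_\tau,b_\tau,c_\tau,d_\tau)=(2,0,0,0)=2(1,0,0,0)$. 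This is the base case.

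The structural observation driving the induction is that, comparing Lemma~\ref{lem:rs-on-abcd} with Lemma~\ref{lem:sigmas-on-abcd} entry by entry, the map $r_i$ on $(a_\tau,b_\tau,c_\tau,d_\tau)$ is identical to the map $\sigma_i$ on the coefficients $(a,b,c,d)$ of $\bv{a+b\phi}{c+d\phi}$. For example $r_0$ sends $(a_\tau,b_\tau,c_\tau,d_\tau)$ to $(a_\tau+d_\tau,\,b_\tau+c_\tau+d_\tau,\,c_\tau,\,d_\tau)$, which is exactly how $\sigma_0$ acts on coefficients, and the cases $i=1,2,3$ match in the same way. Each of these four maps is linear and homogeneous, hence commutes with multiplication by $2$.

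For the inductive step I would take $\mathbf{v}=\bv{a+b\phi}{c+d\phi}\in\Lambda$ with short trajectory $\tau$ satisfying $(a_\tau,b_\tau,c_\tau,d_\tau)=2(a,b,c,d)$, and set $\mathbf{v}'=\sigma_i\mathbf{v}\in\Lambda_i\subseteq\Lambda$, whose coefficients $(a',b',c',d')$ are given by the $\sigma_i$-map of Lemma~\ref{lem:sigmas-on-abcd}. As established in the proof of Lemma~\ref{lem:rs-on-abcd}, replacing the holonomy vector $\mathbf{v}$ by $\sigma_i\mathbf{v}$ replaces the cutting sequence of $\tau$ by that of the short trajectory $\tau'$ in direction $\mathbf{v}'$ via the substitution $r_i$, so $(a_{\tau'},b_{\tau'},c_{\tau'},d_{\tau'})=r_i(2a,2b,2c,2d)$. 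By linearity this equals $2\,r_i(a,b,c,d)=2\,\sigma_i(a,b,c,d)=2(a',b',c',d')$, which closes the induction; since Theorem~\ref{thm:saddles} reaches every element of $\Lambda$, the claim follows for all short periodic trajectories.

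The one point requiring care is the bookkeeping in the inductive step: confirming that applying $\sigma_i$ to the holonomy vector corresponds to applying precisely $r_i$ (with the same index, and in the forward direction) to the cutting sequence. This is exactly what Lemma~\ref{lem:subs} and the proof of Lemma~\ref{lem:rs-on-abcd} set up, so once it is cited the rest is a routine homogeneous-linear induction with no genuine estimates to perform.
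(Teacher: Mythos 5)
Your proposal is correct and follows essentially the same route as the paper's proof: induction on depth in the tree, with base case $\sv 10 \leftrightarrow \ar 12 \ar 21$ giving $(2,0,0,0)=2(1,0,0,0)$, and the inductive step consisting of the observation that the action of $r_i$ on $(a_\tau,b_\tau,c_\tau,d_\tau)$ (Lemma~\ref{lem:rs-on-abcd}) coincides with the action of $\sigma_i$ on $(a,b,c,d)$ (Lemma~\ref{lem:sigmas-on-abcd}), so linearity preserves the factor of $2$. Your write-up is in fact slightly more careful than the paper's, since you explicitly flag and cite the bookkeeping fact that applying $\sigma_i$ to the holonomy vector corresponds to applying the same-index substitution $r_i$ to the cutting sequence.
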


\begin{proof}
  Corresponding to the vector $\vv$ is a unique sequence $a_\vv = (k_0, k_1, ..., k_n)$ of sectors as in Definition~\ref{def:algorithm} such that $\vv = \sigma_{k_0} ... \sigma_{k_n} \sv{1}{0}$.
  Correspondingly, $\tau = r_{k_0} ... r_{k_n} (\ar 12 \ar 21)$.
  The proof is by induction on $n$, and the induction step consists in observing that the
  effect of the $\sigma_i$'s on $(a, b, c, d)$ and of the $r_i$'s on $(a_\tau, b_\tau, c_\tau, d_\tau)$ match, by comparing Lemmas \ref{lem:rs-on-abcd} and \ref{lem:sigmas-on-abcd}.
\end{proof}

Now we are ready to prove the main theorem.

\begin{proof}[Proof of the Combinatorial Period Theorem \ref{thm:double}]
  The proof is by induction on depth in the tree.

  For the short trajectory, the base case is the vector $\vv=\sv 10 = \sv{1+0\phi}{0+0\phi}$, corresponding to cutting sequence $\ar 12 \ar 21$. The period is twice the sum of the coefficients, so the result holds.

  Lemma \ref{lem:correspondence} shows that each application of the $r_i$'s and the $\sigma_i$'s has an equivalent effect on the cutting sequence letters and on the vector coefficients, so the relationship is preserved.

  For the long trajectory, we start with cutting sequence $3^4 4^3$, and the combinatorics matches just as with the short trajectory.
\end{proof}

At the end of this section in \S\ref{subsec:extended}, we give an extended proof of the Combinatorial Period Theorem \ref{thm:double} that introduces some new ideas.

\subsection{The L-necklace, the Five Ls, and their Veech group}\label{sec:veech_group}

As discussed in \S\ref{subsec:necklace}, the \emph{Necklace}, which we will now call the \emph{P-necklace}, is a translation surface made of $10$ pentagons glued edge-to-edge, which is the unfolding of the pentagonal billiard table. It is a 5-fold cover of the double pentagon surface. Here, we introduce a different presentation of the same topological surface: the \emph{L-necklace} is a transformed version of the necklace under $P^{-1}$ (Definition \ref{def:stretch}), made of $5$ copies of the golden L (see Figure \ref{necklaces}).

\begin{figure}[!ht]
  \centering
  \includegraphics[height=200pt]{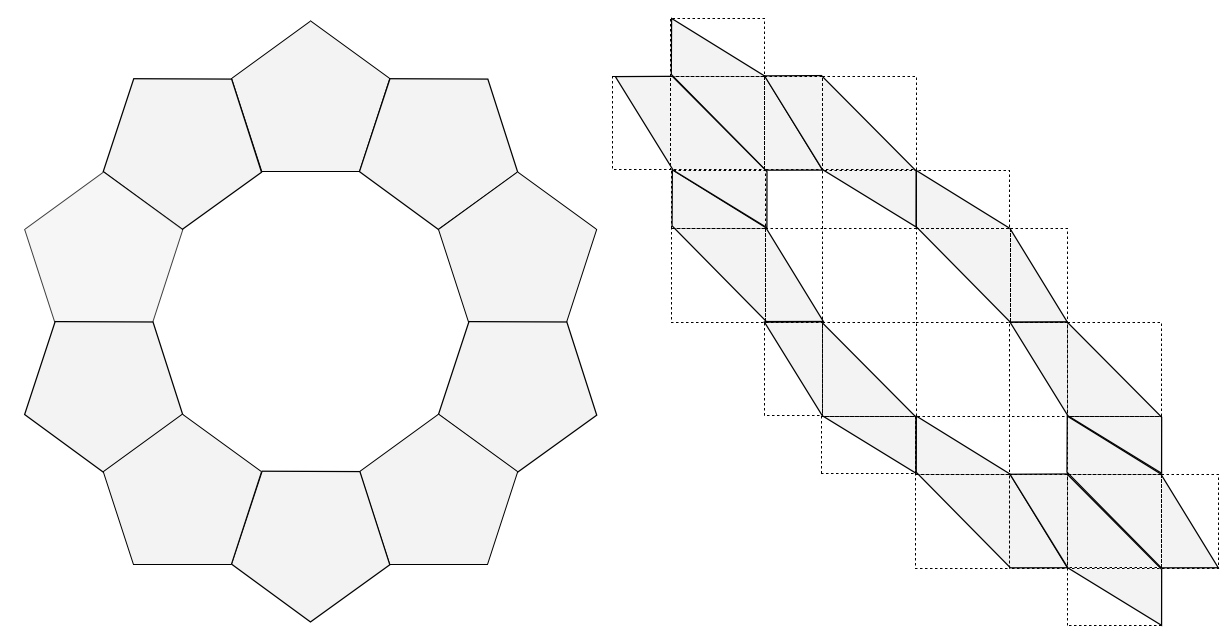}
\caption{The P-necklace and the L-necklace, which are affine images of each other. Each edge is glued to the parallel facing edge; for labeling, see Figure \ref{fig:necklace}. The dotted rectangles show the underlying structure of golden Ls. \label{necklaces}}
\end{figure}

We study the necklace because a cutting sequence on the necklace can be projected down to a cutting sequence on the double pentagon via a 5-to-1 edge label map. Also, the necklace can be folded into the pentagon billiard table via a 10-to-1 edge label map.

It is useful to reassemble the $L$-necklace into the underlying golden $L$s, as in Figure \ref{five-ls}. We call this presentation the \emph{Five $L$s}.

\begin{figure}[!ht]
  \centering
  \begin{tikzpicture}[scale=1.75]
  \foreach \j in {0,1,2,3,4} {
    \draw [xshift=1.75*\j cm, semithick]
      (0, 0)
      -- node [above=-2pt] {$D'_{\j}$} (1, 0)
      -- node [above=-2pt] {$B'_{\j}$} (1.618, 0)
      -- node [left=-2pt] {$A_{\j}$} (1.618, 1)
      -- node [below=-2pt] {$B_{\j}$} (1, 1)
      -- node [left=-2pt] {$C_{\j}$} (1, 1.618)
      -- node [below=-2pt] {$D_{\j}$} (0, 1.618)
      -- node [right=-2pt] {$C'_{\j}$} (0, 1)
      -- node [right=-2pt] {$A'_{\j}$} (0, 0)
    ;
    \draw [xshift=1.75*\j cm, gray, very thin]
      (1, 0) -- (1, 1) -- (0, 1);
  };
  \end{tikzpicture}
  \caption{Five Ls, from which we build surfaces $f(a,b,c,d)$ by gluings defined in Definition \ref{fourtuple}. \label{five-ls}}
\end{figure}

\begin{definition}\label{fourtuple}
  We define a family of translation coverings of degree 5 of the golden L. For that, consider five copies $L_0, \dots L_4$ of the golden L, with indices thought of modulo 5. Label the sides of each $L_j$ as $A_j, B_j, C_j, D_j, A'_j, B'_j, C'_j, D'_j$ as in Figure \ref{five-ls}. For any $(a,b,c,d)\in(\Z/5\Z)^4$, define $f(a,b,c,d)$ as the surface obtained from $L_0\dots L_4$ by gluing sides $A_j$ to $A'_{j+a}$,
  $B_j$ to $B'_{j+b}$,
  $C_j$ to $C'_{j+c}$,
  $D_j$ to $D'_{j+d}$, for all $j$ in $\Z/5\Z$.

\end{definition}

In this notation, the $L$-necklace is $f(1,3,2,4).$ This is easy to check.

\begin{definition}
  Let $T$ be the horizontal shear $\sm 1{\phi}01=\sigma_0$ and let $R$ be the counter-clockwise quarter-turn $\sm 0{-1}10$.
\end{definition}

$T$ and $R$ generate the Veech group $\Gamma$ of the golden L, which is the Hecke group $G_5$. We analyze $T$ and $R$ with respect to their effects on the surfaces $f(a,b,c,d)$. $T\cdot f(a,b,c,d)$ records the gluings that result from applying the shear to the surface with gluings $(a,b,c,d)$, and similarly for $R\cdot f(a,b,c,d)$.

\begin{lemma}\label{lem:relabeling} By applying the matrices $T$ and $R$ and relabeling, we obtain the relations $T\cdot f(a,b,c,d) = f(a, b-a, c, d-a-c)$ and $R\cdot f(a,b,c,d) = f(-d,c,-b,a)$.

  If $a=b=c=d=0$, we get a disconnected surface with five connected components, each of which is a copy of the golden L surface. 
Furthermore, we can eliminate redundancies modulo $5$ with the following reductions, where the quotients are all taken modulo $5$:

  \begin{itemize}
    \item If $a\neq 0$, then dividing by $a$, we get $f(a,b,c,d) =  f(1,b/a,c/a,d/a)$.
    \item If $a=0$ and $b\neq 0$, then dividing by $b$, we get $f(0,b,c,d) = f(0,1,c/b,d/b)$.
  \end{itemize}
  These relabelings give a canonical representation of each surface.
\end{lemma}

\begin{proof}
  The first two relations are a simple exercise in translation surfaces. The reductions are obtained by relabeling the five Ls in each $f(a,b,c,d)$.
\end{proof}

\begin{proposition} \label{prop:TRgraph}
  The $\Gamma$-orbit of the $L$-necklace consists of six surfaces (Figure \ref{sixnodes}).
\end{proposition}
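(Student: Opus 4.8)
The plan is to compute the $\Gamma$-orbit explicitly using the two generators $T$ and $R$ acting on the combinatorial data $f(a,b,c,d)$, taking full advantage of the reduction rules established in the preceding lemma. Since $\Gamma$ is generated by $T$ and $R$, every element of the orbit is reachable from the starting surface $f(1,3,2,4)$ by a finite word in $T$, $R$, and their inverses. The key observation making this finite is that after each application of $T$ or $R$ we immediately normalize modulo $5$ using the reductions $f(a,b,c,d)=f(1,b/a,c/a,d/a)$ when $a\neq 0$, and $f(a,b,c,d)=f(0,b/d,c/d,1)$ when $a=0$. This means every surface in the orbit has a canonical representative with either $a=1$ or $(a,d)=(0,1)$, and since $b,c,d$ (respectively $b,c$) range over $\mathbf{Z}/5\mathbf{Z}$, there are only finitely many possible normalized tuples to track.

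First I would set up the explicit action on normalized tuples: combining the generator formulas $T\cdot f(a,b,c,d)=f(a,b-a,c,d-a-c)$ and $R\cdot f(a,b,c,d)=f(-d,c,-b,a)$ with the normalization step, I would obtain induced maps on the set of canonical representatives, all arithmetic performed in $\mathbf{Z}/5\mathbf{Z}$. Then I would run a breadth-first search starting from $f(1,3,2,4)$: apply $T$, $T^{-1}$, $R$, $R^{-1}$ (equivalently $R^3$) to each newly discovered normalized tuple, normalize, and record any genuinely new surface. I expect this search to close up quickly, yielding exactly the six surfaces claimed and depicted in Figure~\ref{sixnodes}. The directed graph of these transitions is precisely the object the figure records, so verifying closure under both generators confirms the orbit has size six.

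The main obstacle will be bookkeeping rather than conceptual difficulty: one must be careful that the normalization is well-defined and that the induced action on normalized representatives is computed correctly, since a sign error or a mistake in the modular inverse (note $2^{-1}=3$, $3^{-1}=2$, $4^{-1}=4$ in $\mathbf{Z}/5\mathbf{Z}$) could spuriously split or merge orbit elements. A secondary subtlety is confirming that distinct normalized tuples genuinely correspond to non-isomorphic surfaces, so that the count of six is not an overcount; here I would appeal to the fact that the normalized $4$-tuple is a complete combinatorial invariant of the gluing pattern, as built into Definition~\ref{fourtuple}. Once the search terminates and every generator maps the six-element set into itself, the proposition follows immediately, since the orbit is by definition the smallest $\Gamma$-invariant set containing $f(1,3,2,4)$ and we have exhibited a $\Gamma$-invariant set of size six containing it, with no proper invariant subset reachable from the base point.
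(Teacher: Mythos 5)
Your proposal is correct and follows essentially the same route as the paper: the paper's proof also just computes the images of the Five Ls under the generators $T$ and $R$ using the formulas and mod-$5$ reductions from the preceding lemma, and verifies that the orbit closes up into the six surfaces recorded in Figure~\ref{sixnodes}. Your version merely makes the bookkeeping explicit (normalization after each step, breadth-first search, closure under generators of a finite set implying closure under inverses), which the paper compresses into ``a simple calculation in translation surfaces.''
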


\begin{proof}
  We compute the orbit of the $L$-necklace under $\Gamma$ using $T$ and $R$. The gluings and relations in Figure \ref{sixnodes} are obtained by simple calculations in translation surfaces, and relabelings. No two of these six surfaces can be the same, by the uniqueness after relabeling from Lemma \ref{lem:relabeling}.
\end{proof}

\begin{figure}[!ht]

  \centering
  \begin{tikzpicture}[description/.style={fill=white,inner sep=2pt}]
    \matrix (m) [matrix of math nodes, row sep=3em,
      column sep=2.5em, text height=1.5ex, text depth=0.25ex]
    { f(1,3,2,4) & f(1,4,2,2) &            &            \\
                 &            & f(1,0,2,0) & f(0,1,0,3) \\
      f(1,2,2,1) & f(1,1,2,3) &            &            \\};

    \path[->] (m-1-1) edge [bend right=15] node [left]{$T$}(m-3-1);
    \path[<->] (m-1-1) edge [bend left=15] node [right]{$R$}  (m-3-1);

    \path[->] (m-3-1) edge node [above] {$T$} (m-3-2);
    \path[->] (m-3-2) edge node [above] {$T$}  (m-2-3);
    \path[->] (m-2-3) edge node [below] {$T$} (m-1-2);
    \path[->] (m-1-2) edge node [below] {$T$}  (m-1-1);

    \path[<->] (m-2-3) edge node [above] {$R$}  (m-2-4);
    \path[<->] (m-2-4) [bend left=15] (m-2-4);

    \draw [->,line width=.5pt] (5.05,-0.1) arc[x radius=0.3cm, y radius =.3cm, start angle=-160, end angle=160] ;
    \node at (5.9,0) {$T$};

    \draw [->,line width=.5pt] (-1.1,1.8) arc[x radius=0.3cm, y radius =.3cm, start angle=-60, end angle=220] ;
    \node at (-1.2,2.6) {$R$};
    \draw [->,line width=.5pt] (-1.5,-1.9) arc[x radius=0.3cm, y radius =.3cm, start angle=-220, end angle=60] ;
    \node at (-1.2,-2.7) {$R$};


  \end{tikzpicture}

  \caption{The effects of the shear $T$ and the rotation $R$ on the gluings of the Five Ls.
      \label{sixnodes}}
\end{figure}

We name the five nodes that are cyclically related by $T$ to be $A, B, C, D, E$, counter-clockwise starting with the top left, and the node $f(0,1,0,3)$ to be $F$, as shown in Figure \ref{colorgraph}.




\subsection{Symmetries of trajectories via the group structure}\label{sec:symm_group}

Among periodic trajectories on the pentagonal billiard table, some are rotationally symmetric  (Figure \ref{fig:symmetric}), some are not (Figure \ref{fig:asymmetric}). In this section we relate this property to some paths in a graph encoding the action of $\sigma_0\dots \sigma_3$ on surfaces $A, B, C, D, E, F$.


To each periodic direction, we associate a starting horizontal trajectory, with cutting sequence either $12$ (short) or $34$ (long) on the double pentagon, and a tree word. For example, the tree word $120$ means that we apply the product $\sigma_0 \sigma_2 \sigma_1$ to $\sv 10$ in order to get the associated direction. In order to extract information about where we end up in the graph depending on its tree word, we express the $\sigma_i$ in terms of $T$ and $R$ and re-draw the graph in Figure \ref{colorgraph}.

\begin{lemma}
  Expressing each $\sigma_i$ in terms of $T$ and $R$ and following them around the directed graph in Figure \ref{sixnodes} yields the directed graph in Figure \ref{colorgraph}, which tells us how the $\sigma_i$s permute the surfaces $A$, $B$, $C$, $D$, $E$ and $F$.
\end{lemma}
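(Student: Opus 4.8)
The plan is to exploit the fact that each $\sigma_i$ already lies in the Veech group $\Gamma=\langle T,R\rangle$, so it can be written as a word in the generators $T$ and $R$; once such words are in hand, the permutation that $\sigma_i$ induces on the six-element orbit $\{A,B,C,D,E,F\}$ is obtained simply by composing the $T$- and $R$-actions already recorded in Figure~\ref{sixnodes}. The colored arrows of Figure~\ref{colorgraph} are then nothing more than a drawing of these four composite permutations, so producing them is the whole content of the lemma.

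First I would write down explicit words. Since $\sigma_0=T$ by Definition~\ref{def:sigmas}, that case is immediate. For the others, a short $2\times2$ matrix computation with $R=\sm{0}{-1}{1}{0}$ gives the conjugation relations $R\,T\,R^{-1}=\sigma_3^{-1}$ and $T^{-1}\sigma_1=RT$, whence
\[
\sigma_3=R\,T^{-1}R^{-1},\qquad \sigma_1=T\,R\,T .
\]
For $\sigma_2$ I would invoke the reflection symmetry: conjugation by $S=\sm{0}{1}{1}{0}$ sends $T\mapsto\sigma_3$ and $R\mapsto R^{-1}$ (Lemma~\ref{lem:transpose}) and sends $\sigma_1\mapsto\sigma_2$ (since $\sigma_2=\sigma_1^{T}$), so $S$-conjugating $\sigma_1=TRT$ yields $\sigma_2=\sigma_3\,R^{-1}\sigma_3$. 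The key simplification for the subsequent bookkeeping is that on this finite orbit $R^2=-I$ acts trivially: one checks $R^2\cdot f(a,b,c,d)=f(-a,-b,-c,-d)=f(a,b,c,d)$ after the mod-$5$ reduction, so $R$ acts as an involution on the nodes and every $R^{-1}$ may be replaced by $R$ when tracing paths.

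With the words and this simplification fixed, the final step is purely mechanical: starting from each of the six nodes I would follow the arrows of Figure~\ref{sixnodes} in the order prescribed by each word (reading right to left), recording the image node. One finds that $\sigma_0=T$ cycles the five $T$-related nodes and fixes $F$, that $\sigma_3=R\,T^{-1}R^{-1}$ is again a $5$-cycle but now fixes a different single node, and that $\sigma_1$ and $\sigma_2$ each come out as a product of two disjoint $3$-cycles. I do not expect a serious obstacle here; the only points demanding care are getting the generating words right (the two conjugation relations must be confirmed by multiplying the matrices of Definition~\ref{def:sigmas}) and respecting both the composition order and the involutivity of $R$ on the orbit, since a single slip in either would scramble the output. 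I would guard against this by cross-checking the computed permutations against the conjugacy constraints that the words force — for instance $\sigma_3$ is conjugate to $T^{-1}$ and so must be a $5$-cycle plus a fixed point, while $\sigma_2=R\,\sigma_1^{-1}R^{-1}$ must be conjugate to $\sigma_1^{-1}$ — which pins down the permutations independently and confirms that they are exactly the directed graph of Figure~\ref{colorgraph}.
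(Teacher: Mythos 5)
Your proposal is correct and is essentially the paper's own argument: the paper likewise expresses each $\sigma_i$ as a word in the generators $T$ and $R$ and traces those words (rightmost letter first) through the Schreier graph of Figure \ref{sixnodes} to produce Figure \ref{colorgraph}. The only difference is in the choice of words: the paper uses the uniform positive expressions $\sigma_0=T$, $\sigma_1=TRT$, $\sigma_2=TRTRT$, $\sigma_3=TRTRTRT$ (each verifiable by direct multiplication using $\phi^2=\phi+1$), so it never needs inverses, whereas your expressions $\sigma_3=RT^{-1}R^{-1}$ and $\sigma_2=\sigma_3R^{-1}\sigma_3$ are also correct but additionally require your (valid) observations that $R^2=-I$ acts trivially on the six nodes and that $T^{-1}$ is traced by reversing the $T$-arrows; either way the resulting permutations, including your predicted cycle structures (a $5$-cycle fixing $F$ for $\sigma_0$, a $5$-cycle fixing a different node for $\sigma_3$, and two disjoint $3$-cycles for each of $\sigma_1$ and $\sigma_2$), agree with Figure \ref{colorgraph}.
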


\begin{proof}
  We have the following relationships:
  \begin{align*}
    \sigma_0 & = T       \\
    \sigma_1 & = TRT     \\
    \sigma_2 & = TRTRT   \\
    \sigma_3 & = TRTRTRT \\
  \end{align*}
  Starting at each node in the graph in Figure \ref{sixnodes}, we determine the effect of each $\sigma_i$ by following the arrows corresponding to $T$ and $R$. This yields the directed graph in Figure \ref{colorgraph}.
\end{proof}

\begin{figure}[!ht]

  \begin{center}
    \begin{tikzpicture}[scale=1.5]
      \node (A) at (0,2) {A};
      \node (B) at (0,0) {B};
      \node (C) at (2,0) {C};
      \node (D) at (3,1) {D};
      \node (E) at (2,2) {E};
      \node (F) at (5,1) {F};
      \draw [dotted, ->, black, very thick] (A) to [bend left=10] (B);
      \draw [dashed, ->, blue]  (A) to [bend left=30] (B);
      \draw [->, red]   (A) to [bend right=10] (B);
      \draw [->, green, very thick] (A) to [bend right=30] (B);
      \draw [dotted, ->, black, very thick] (B) to (C);
      \draw [dashed, ->, blue]  (B) to (E);
      \draw [->, red]   (B) to [bend right=40] (F);
      \draw [->, green, very thick] (B) to (D);
      \draw [dotted, ->, black, very thick] (C) to [bend right=10] (D);
      \draw [dashed, ->, blue]  (C) to (A);
      \draw [->, red]   (C) to [bend right=10] (E);
      \draw [->, green, very thick] (C) to [bend right=10] (F);
      \draw [dotted, ->, black, very thick] (D) to [bend right=10] (E);
      \draw [dashed, ->, blue]  (D)  .. controls +(.7,-.3) and +(.7,.3) ..  (D);
      \draw [->, red]   (D) to [bend right=10] (C);
      \draw [->, green, very thick] (D) to (A);
      \draw [dotted, ->, black, very thick] (E) to (A);
      \draw [dashed, ->, blue]  (E) to [bend right=10] (F);
      \draw [->, red]   (E) to [bend right=10] (D);
      \draw [->, green, very thick] (E) to [bend right=10] (C);
      \draw [dotted, ->, black, very thick] (F) .. controls +(1,-1) and +(1,1) .. (F);
      \draw [->, green, very thick]  (F) to [bend right=10](E);
      \draw [->, red]   (F) to  [bend right=40] (A);
      \draw [dashed, ->, blue] (F) to   [bend right=10] (C);
    \end{tikzpicture}
  \end{center}

  \caption{The location in the Veech group of the Five Ls, depending on the $\sigma_i$. Start at node $A$ and follow arrows \emph{backwards} according to the tree word for a given direction, read \emph{backwards}, to determine its associated symmetry. The colors are as follows: dotted black is $\sigma_0$, thick green is $\sigma_1$, thin red is $\sigma_2$, and dashed blue is $\sigma_3$. Landing on $F$ indicates only reflection symmetry; landing on the other five indicates rotation symmetry as well. \label{colorgraph}}
\end{figure}

\begin{theorem}\label{thm:symm}
  Given a periodic trajectory and the tree word associated to its direction, start at $A$ in the diagram in Figure \ref{colorgraph}, and follow the arrows \emph{backwards} according to the tree word, read \emph{backwards}. If it ends on $A$ through $E$, the trajectory has order $5$ rotational symmetry; if it lands on $F$, the trajectory does not have rotational symmetry.
\end{theorem}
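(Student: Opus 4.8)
The plan is to reduce the rotational-symmetry question in an arbitrary periodic direction to the single, explicitly checkable question of the symmetry of the \emph{horizontal} trajectory on each of the six surfaces $A,\dots,F$, and then to show that the prescribed walk in Figure~\ref{colorgraph} computes exactly which of these six surfaces the given direction is carried to.

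First I would set up the reduction. Write the tree word as $k_0 k_1\cdots k_n$, so that the direction is $\vecv = g\,\sv10$ with $g=\sigma_{k_0}\cdots\sigma_{k_n}\in\Gamma$. Since each $\sigma_i\in\Gamma$ is an affine automorphism of the golden L, and $\Gamma$ permutes the six marked gluings $A,\dots,F$ of the Five Ls (Proposition~\ref{prop:TRgraph} and Figure~\ref{colorgraph}), the inverse map $g^{-1}$ is an isomorphism of translation surfaces that carries the surface at node $A$ to the surface at the node reached by undoing $g$, while sending the direction $\vecv$ to the horizontal. All six surfaces are affine images of the necklace, so each carries an order-$5$ rotational symmetry inherited from the five-fold symmetry of the necklace, and $g^{-1}$ conjugates this symmetry on $A$ to the corresponding symmetry on its image node. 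Consequently the trajectory in direction $\vecv$ is rotationally symmetric \emph{if and only if} the horizontal trajectory on the image node is. Because the arrows of Figure~\ref{colorgraph} encode the forward action of the $\sigma_i$ (via $\sigma_0=T$, $\sigma_1=TRT$, $\sigma_2=TRTRT$, $\sigma_3=TRTRTRT$), undoing $g$ is precisely ``read the tree word backwards and follow the arrows backwards from $A$,'' and the terminal node is the surface re-marked by $g^{-1}$, i.e.\ the one on which the direction-$\vecv$ trajectory is horizontal.

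Next I would prove the base-case dictionary: the horizontal trajectory is rotationally symmetric on each of $A,B,C,D,E$ and is \emph{not} on $F$. The mechanism is that the shear $T=\sigma_0$ fixes the horizontal direction yet cyclically permutes $A\to B\to C\to D\to E\to A$ (Figure~\ref{sixnodes}); this $5$-cycle is the combinatorial shadow of the order-$5$ rotation cyclically permuting the five copies of the golden L in the necklace, so on these nodes the horizontal cylinders are interchanged $5$-fold and fold to a rotationally symmetric pentagon path. By contrast $F=f(0,2,0,1)$ is fixed by $T$ (the self-loop in Figure~\ref{sixnodes}): the five Ls are not cyclically permuted by the horizontal shear, the $5$-fold symmetry is absent, and the folded path has only the reflection symmetry supplied by Theorem~\ref{thm:tree_symm}. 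This classification is to be verified directly from the gluing data $f(a,b,c,d)$ and the explicit horizontal cylinder structure of the Five Ls, exactly as in the computations behind Proposition~\ref{prop:TRgraph}.

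Finally I would assemble the two halves: the landing node lies in $\{A,\dots,E\}$ exactly when the horizontal trajectory there is rotationally symmetric, hence, by the reduction, exactly when the original direction-$\vecv$ trajectory is rotationally symmetric, while landing on $F$ gives no rotational symmetry. I expect the main obstacle to be the base-case dictionary: rigorously pinning down that $F$ is the \emph{unique} asymmetric node, and confirming that symmetry is genuinely walk-invariant, i.e.\ that the affine conjugation by $g^{-1}$ really carries the order-$5$ necklace symmetry of one node to that of its image rather than to some unrelated automorphism. Once the $T$-cycle on $A,\dots,E$ is correctly identified with the necklace rotation, the remaining steps are the direct gluing verifications already developed earlier in the section.
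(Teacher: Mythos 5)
Your proposal follows essentially the same route as the paper's proof: the backward walk on the graph of Figure~\ref{colorgraph} computes the node reached by applying $g^{-1}$ to the marked Five Ls, reducing the question to the symmetry of the horizontal trajectory on each of the six surfaces, which is then settled by the gluing data exactly as you indicate --- for $A$ through $E$ ($a=1$, $c=2$) the horizontal cylinders run through all five Ls, so the folded billiard path visits all five orientations and is rotationally symmetric, while for $F$ ($a=0$, $c=0$) the horizontal cylinders close up within a single L and the trajectory lifts to five rotated copies, none individually symmetric. Your supplementary heuristic via the $T$-cycle structure of Figure~\ref{sixnodes} is consistent with this picture but is not itself the proof; the direct verification from the gluing data that you defer to is precisely what the paper carries out.
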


\begin{proof}
  Every periodic direction has an associated long saddle connection vector. A finite product $\sigma_i$s, for example $\sigma_0\sigma_2\sigma_1$ for tree word 120, which is an element of the Veech group of the golden L, takes $\sv 10$ to this saddle connection vector. By Proposition \ref{prop:TRgraph}, that element takes the surface to one of $A$, $B$, $C$, $D$, $E$ or $F$.

  Thus the inverse, e.g. $(\sigma_0\sigma_2\sigma_1)^{-1} = {\sigma_1}^{-1}{\sigma_2}^{-1}{\sigma_0}^{-1}$, of the element takes the saddle connection vector to a horizontal vector. Applying this inverse corresponds to reading the tree word backwards (021 for our example) and following the arrows backwards, since we are using ${\sigma_i}^{-1}$. We look at the two cylinders of trajectories on the Five Ls corresponding to this horizontal trajectory.


  For $A$, $B$, $C$, $D$ and $E$, $a=1$ and $c=2$, so the Five Ls are arranged horizontally as in Figure \ref{five-ls}. Thus the horizontal cylinders are five times as wide as in one golden L, and the horizontal trajectory traverses all of them before repeating. This means that the pentagon trajectory traverses all five copies of the double pentagon in the necklace, corresponding to all five orientations (rotations by $2\pi/5$), one after the other, so the trajectory has order 5 rotational symmetry, and its length is five times as long as in the double pentagon.

  For $F$, $a=0$ and $c=0$, so the five Ls are arranged vertically, so a horizontal cylinder is the same as in a single golden L. Thus the golden $L$ trajectory lifts to five distinct translated cylinders of trajectories, one for each orientation, which are images of each other under rotation by $2\pi/5$. So trajectories in case $F$ fail to individually have rotational symmetry.
\end{proof}

\begin{figure}[!ht]
  \centering
  \includegraphics[width=0.24\textwidth]{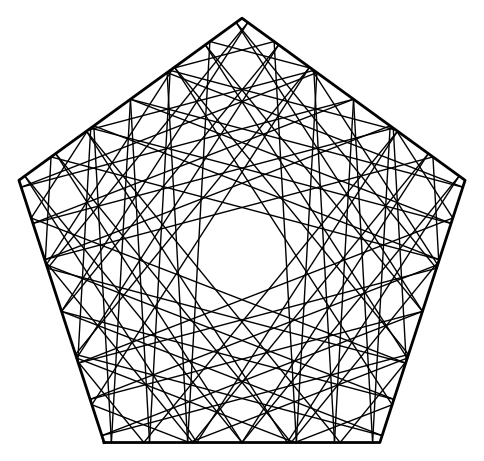}\
  \includegraphics[width=0.24\textwidth]{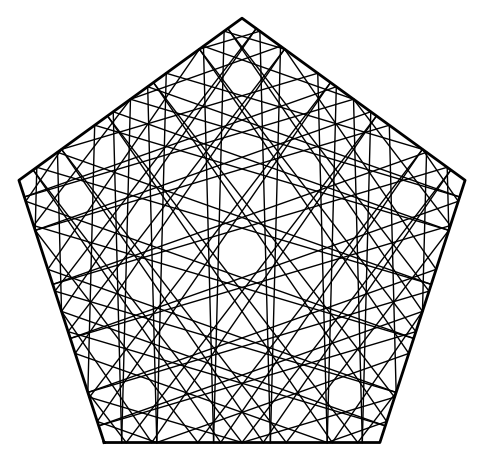} \
  \includegraphics[width=0.24\textwidth]{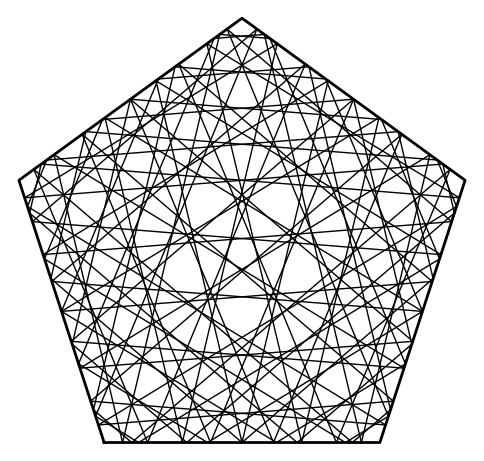}\
  \includegraphics[width=0.24\textwidth]{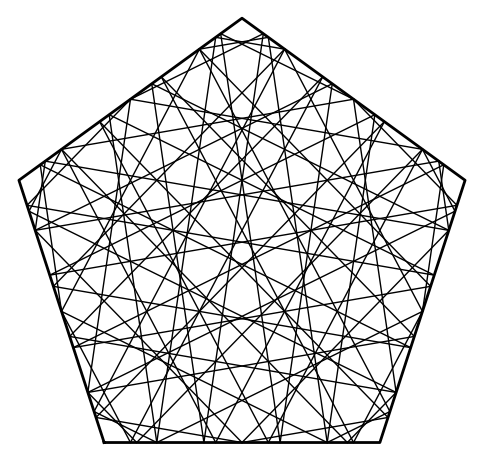}
\caption{Examples of periodic paths on the pentagon with the full symmetry of $D_5$.  These trajectories are 101-short, 12-long, 1000-long and 23-long, respectively, corresponding to the core curve of the cylinder. Using a non-core curve breaks the bilateral symmetry and results in a picture with only (usually very subtle) rotational symmetry.\label{fig:symmetric}}
\end{figure}

\begin{corollary}\label{cor:sixths}
  When enumerating periodic orbits in the pentagon billiard table by depth in the tree, the asymptotic proportion that have both rotational symmetry and reflection symmetry is $5/6$ (see Figure \ref{fig:symmetric}), while the asymptotic proportion with only reflection symmetry is $1/6$ (see Figure \ref{fig:asymmetric}).
\end{corollary}

\begin{proof}
  The stochastic matrix for the graph in Figure \ref{colorgraph} is:
  \[
    \left[\begin{matrix}
        0 & 0 & 1/4 & 1/4 & 1/4 & 1/4 \\1&0&0&0&0&0\\0&1/4&0&1/4&1/4&1/4\\0&1/4&1/4&1/4&1/4&0\\0&1/4&1/4&1/4&0&1/4\\0&1/4&1/4&0&1/4&1/4
      \end{matrix}\right].
  \]
  The $(i,j)$ entry of this matrix gives among paths of length 1 starting from node $i$ the proportion that end at node $j$. Powers of this matrix give the analogous proportions for paths of other lengths. Powers of this matrix quickly approach the matrix all of whose entries are $1/6$. Thus a path of length $n$ asymptotically visits nodes equally, so $1/6$ of paths end on vertex $F$, and $5/6$ of them end on the other vertices.
\end{proof}

\begin{figure}[!ht]
  \centering
  \includegraphics[width=0.24\textwidth]{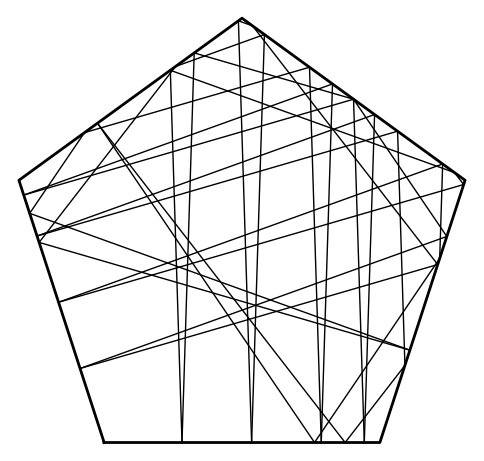} \
  \includegraphics[width=0.24\textwidth]{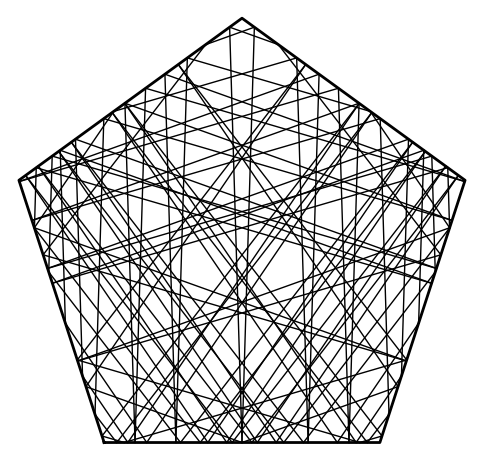}\
  \includegraphics[width=0.24\textwidth]{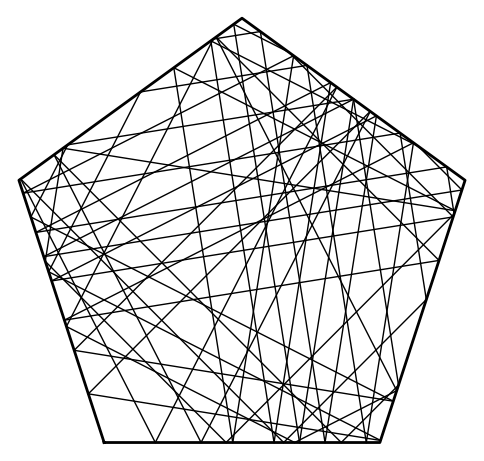}\
  \includegraphics[width=0.24\textwidth]{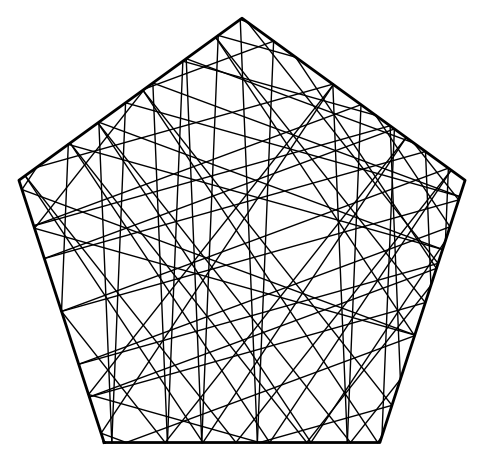}
\caption{Examples of periodic paths on the pentagon with only bilateral symmetry.  These are the short trajectories with tree words 121, 1112, 1223 and 2011, respectively, corresponding to the core curve of the cylinder. Using a non-core curve breaks the bilateral symmetry and results in a picture with no symmetry at all. \label{fig:asymmetric}}
\end{figure}

\begin{question}\label{q:five-sixths}
  Are the proportions also $5/6$ and $1/6$ when enumerating in other ways, such as by $\ell_1$, $\ell_2$ or $\ell_\infty$ norm of the associated saddle connection vector?
\end{question}

Our Theorem \ref{thm:symm} gives a method for following the tree word around a directed graph to determine the symmetry type of a trajectory. Compare this to the approach in \cite{DFT}, which uses a congruence condition on the saddle connection vector to determine the symmetry:

\begin{theorem}[\cite{DFT}, Theorem 11]\label{thm:dft}
  Given a periodic direction on the pentagon billiard table with long saddle connection vector $[a+b\phi, c+d\phi]$, let $L = 2(a+b+c+d)$. The period of the corresponding pentagon billiard trajectory is:
  $\begin{cases} L  & \mbox{if } (d-b)+2(c-a) \equiv 0     \\
      5L & \mbox{if } (d-b)+2(c-a) \not\equiv 0\end{cases} \pmod{5}.$
\end{theorem}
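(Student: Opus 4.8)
The plan is to derive this from the symmetry dichotomy of Theorem~\ref{thm:symm} together with the period count of Theorem~\ref{thm:double}, by showing that the congruence $(d-b)+2(c-a)\equiv 0\pmod 5$ is precisely the algebraic manifestation of landing on the node $F$ in Figure~\ref{colorgraph}. By Theorem~\ref{thm:double} the short trajectory with holonomy $[a+b\phi,c+d\phi]$ has double-pentagon period $L=2(a+b+c+d)$; by the length computation inside the proof of Theorem~\ref{thm:symm}, the billiard period equals $L$ when the backward walk terminates at $F$ (no rotational symmetry, the lift splitting into five disjoint rotated cylinders) and equals $5L$ when it terminates at one of $A,\dots,E$ (rotational symmetry, the lift traversing all five copies). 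Hence the whole theorem reduces to the single equivalence
\[
\text{the backward walk lands on } F \iff (d-b)+2(c-a)\equiv 0\pmod 5 .
\]

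First I would reinterpret ``landing on $F$'' as the vanishing of a monodromy. By Proposition~\ref{prop:TRgraph} the Five Ls form a cyclic $5$-fold cover of the golden L, with deck group $\mathbf{Z}/5$ recording which sheets a cylinder joins; this yields a homomorphism $\mu$ from the period lattice $\mathbf{Z}[\phi]^2$ to $\mathbf{Z}/5$ such that a cylinder with holonomy $\vecv$ splits into five disjoint copies exactly when $\mu(\vecv)=0$, which is the $F$ case. Since $\mu$ is $\mathbf{Z}$-linear, reducing modulo $5$ it becomes an $\mathbf{F}_5$-linear functional on $\mathbf{F}_5^4$, determined by its four values on the basis $[1,0]$, $[\phi,0]$, $[0,1]$, $[0,\phi]$, which I would read off from the gluings in Figure~\ref{five-ls}. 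The target identity is $\mu([a+b\phi,c+d\phi])=(d-b)+2(c-a)\pmod 5$ up to a unit; as only the vanishing locus of $\mu$ enters the statement, the unit is immaterial, and two linear functionals with the same kernel hyperplane are automatically proportional.

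To make the identification rigorous without computing the covering by hand, I would verify the candidate functional $Q(a,b,c,d)=(d-b)+2(c-a)$ against the already-established graph of Figure~\ref{colorgraph} by induction on the length of the tree word. Prepending a generator sends $\vecv\mapsto\sigma_i\vecv$, which by Lemma~\ref{lem:sigmas-on-abcd} is an explicit $\mathbf{F}_5$-linear map on $(a,b,c,d)$, while the terminal node of the backward walk advances by one step $\sigma_i^{-1}$ in Figure~\ref{colorgraph}. Over the finite set of reachable pairs $\big(\text{node},\,(a,b,c,d)\bmod 5\big)$ one then checks that the invariant ``node $=F\Leftrightarrow Q\equiv 0$'' is preserved by all four generators, with base case $\vecv=\sv 10$ giving node $A$ and $Q\equiv 3$. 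A convenient sanity check along the way is the identity $Q(\phi\vecv)\equiv 3\,Q(\vecv)\pmod 5$, so that $Q(\vecv)=0$ iff $Q(\phi\vecv)=0$; this confirms that the short and long vectors $\vecv$ and $\phi\vecv$ give the same answer and that the period dichotomy is genuinely a property of the direction.

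The main obstacle is pinning down the exact functional, namely the coefficients $1$ and $2$ and the modulus $5$. This is where the arithmetic of $\mathbf{Z}[\phi]$ above $5$ intervenes: since $x^2-x-1\equiv(x-3)^2\pmod 5$, the quotient $\mathbf{Z}[\phi]/5$ is the non-reduced ring $\mathbf{F}_5[\epsilon]/(\epsilon^2)$ with $\phi\mapsto 3+\epsilon$ and $\epsilon=\sqrt 5$, and the cyclic $5$-cover ramifies exactly over this prime. Extracting the monodromy functional in the normalized form $(d-b)+2(c-a)$ rather than some other proportional representative requires compatible choices of basis for the homology of the golden L and for the deck group; once the four basis values of $\mu$ are computed correctly, the finite inductive check of the previous paragraph closes the argument.
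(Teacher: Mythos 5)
You are not competing with an argument in the paper here: Theorem~\ref{thm:dft} is quoted from \cite{DFT} precisely as a counterpoint to Theorem~\ref{thm:symm}, and no proof of it appears in this paper, so deriving it from the paper's own machinery is a reasonable plan. Your reduction is sound: by Theorems~\ref{thm:double} and \ref{thm:symm} (as packaged in Theorem~\ref{thm:hecke}), the billiard period is $L$ exactly when the backward walk of Figure~\ref{colorgraph} ends at $F$, and $5L$ when it ends at one of $A,\dots,E$ (with the caveat, shared by the quoted theorem, about the two directions parallel to edges of the table), so everything rests on the equivalence ``the walk ends at $F$ if and only if $Q:=(d-b)+2(c-a)\equiv 0 \pmod 5$.'' Your arithmetic frame is also the right one: $5$ ramifies in $\mathbf{Z}[\phi]$, reduction modulo $\sqrt 5$ sends $\phi\mapsto 3$, the condition $Q(\vecv)\equiv 0$ says exactly that the reduction of $\vecv$ is proportional to $(1,1)$ in $\mathbf{F}_5^2$ (it is nonzero by Lemma~\ref{lem:primitive}), and your checks $Q(\sv 10)\equiv 3$ and $Q(\phi\vecv)\equiv 3\,Q(\vecv)$ are correct.

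The gap is in the inductive engine, and as stated it is fatal. Prepending a generator in your sense, $\vecv\mapsto\sigma_i\vecv$ (passing from tree word $w$ to $wi$), does \emph{not} advance the terminal node of the backward walk by one $\sigma_i^{-1}$-step: it re-roots the walk, sending the node $M^{-1}\cdot A$ to $M^{-1}\sigma_i^{-1}\cdot A$ rather than to $\sigma_i^{-1}M^{-1}\cdot A$, and these differ because the action on the six surfaces is genuinely nonabelian; your rule in fact computes the node of the \emph{reversed} tree word. Concretely, take tree word $10$: then $\vecv=\sigma_0\sigma_1\sv 10=[2\phi,1]$, so $(a,b,c,d)=(0,2,1,0)$ and $Q\equiv 0$; the true walk gives $\sigma_1^{-1}(\sigma_0^{-1}A)=\sigma_1^{-1}E=F$, consistent with the theorem (and with the paper's remark in the proof of Proposition~\ref{prop:10} that this word lands on $F$), whereas your rule gives $\sigma_0^{-1}(\sigma_1^{-1}A)=\sigma_0^{-1}D=C$. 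So the invariant ``node $=F\iff Q\equiv 0$'' already fails at length two along your fictitious orbit, and the finite check you propose cannot close. The repair is to carry the full matrix modulo $\sqrt 5$, not just the pair (node, vector mod $5$): the action of the $\sigma_i$ on the six surfaces factors through $PSL_2(\mathbf{F}_5)$ acting on the six points of $P^1(\mathbf{F}_5)$, and matching the permutations of $T$ and $R$ in Figure~\ref{sixnodes} with those of $\sm 1301$ and $\sm 0{-1}10$ forces the identification $F=[1{:}0]$ (the unique $\sigma_0$-fixed point) and $A=[1{:}1]$. Then for $M=\sigma_{k_0}\cdots\sigma_{k_n}$ the node is $\bar{M}^{-1}\cdot[1{:}1]$ while the reduced vector is $\bar{M}\cdot[1{:}0]$, so ``node $=F$'' holds $\iff \bar{M}[1{:}0]=[1{:}1] \iff Q(\vecv)\equiv 0$, which is exactly the equivalence you need; this single equivariance check on the generators replaces your induction.
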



\subsection{Symmetries of trajectories via geometry}\label{sec:symmetries}

In this section, we develop geometric methods for understanding billiard paths using the necklace, to give yet another characterization of billiard path symmetry (Theorem \ref{thm:geom-symm}).

Given a trajectory on the double pentagon, each time the trajectory crosses an edge, we fold along that edge, and thereby obtain the corresponding trajectory on the pentagon billiard table. It turns out that, for periodic trajectories, there are two possible outcomes to this process.

\begin{theorem}\label{thm:geom-symm}
  For a periodic trajectory $p$ on the double pentagon surface corresponding to the core curve of a cylinder (i.e. bouncing off of the midpoint of an edge), two possibilities arise:
  \begin{enumerate}
    \item The preimage on the necklace is the union of five periodic trajectories, which each project one-to-one to $p$. In this case, the billiard trajectory has the same length as $p$, and only reflection symmetry (see Figure \ref{fig:asymmetric} and Figure \ref{fig:necklace-vert-horiz}(a)).
    \item The preimage on the necklace is a single periodic trajectory, whose length is five times that of $p$. In this case, the billiard trajectory also has length five times that of $p$, and it has both reflection and rotation symmetries (see Figure \ref{fig:symmetric} and Figure \ref{fig:necklace-vert-horiz}(b)).
  \end{enumerate}
\end{theorem}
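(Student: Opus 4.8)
The plan is to read the statement as a covering-space dichotomy. First I would set up the relevant covering: the $P$-necklace is a normal (regular) translation covering of the double pentagon of degree $5$, with cyclic deck group $\langle\rho\rangle\cong\mathbf{Z}/5\mathbf{Z}$. The generator $\rho$ is the order-$5$ rotation that cyclically permutes the five embedded copies of the double pentagon inside the necklace; after the $10$-to-$1$ folding of the necklace onto the pentagon billiard table, $\rho$ is exactly the rotation by $2\pi/5$ of the pentagon. Since $p$ is the core curve of a cylinder, it misses the cone points, so it is a genuine closed geodesic with well-defined lifts and a well-defined monodromy $\mu([p])\in\langle\rho\rangle$.

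The heart of the argument is that $|\langle\rho\rangle|=5$ is prime, so $\mu([p])$ is either trivial or a generator, and these are the only two possibilities. If $\mu([p])$ is trivial, $p$ lifts to five disjoint closed geodesics, each mapped isometrically (degree $1$) onto $p$ and permuted in a single free $\langle\rho\rangle$-orbit; this is case (1), and each lift has length $\ell(p)$. If $\mu([p])$ is a generator, a single lift fails to close up until it has wound around all five sheets, so the full preimage is one connected closed geodesic $\tilde p$, mapped onto $p$ with degree $5$ and hence of length $5\,\ell(p)$, and $\tilde p$ is $\langle\rho\rangle$-invariant; this is case (2). This already produces the two preimage structures asserted in the theorem.

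I would then transfer these two pictures to the billiard through the folding map. In case (1) the five disjoint lifts are cyclically interchanged by $\rho$, i.e.\ by rotation through $2\pi/5$, so they fold to five billiard paths that are rotations of one another; a single such path therefore has the same length $\ell(p)$ as $p$ and carries no rotational symmetry, matching Figure~\ref{fig:asymmetric}. In case (2) the connected lift $\tilde p$ is $\rho$-invariant, so its image in the billiard is invariant under rotation by $2\pi/5$, giving order-$5$ rotational symmetry and length $5\,\ell(p)$, matching Figure~\ref{fig:symmetric}. In both cases reflection symmetry is automatic: as the central curve of a cylinder, $p$ passes through Weierstrass points (the midpoints of edges), and the folding is set up so that this forces the billiard path to coincide with its mirror image. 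This also recovers, geometrically, the dichotomy already obtained algebraically in Theorem~\ref{thm:symm}.

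The step I expect to be the main obstacle is the precise compatibility bookkeeping in the transfer: verifying that the deck rotation $\rho$ of the $5$-fold cover is genuinely identified with the geometric rotation by $2\pi/5$ of the pentagon, compatibly through both the $5$-to-$1$ cover and the residual $2$-to-$1$ folding, so that ``$\rho$-invariant connected lift'' translates exactly into ``rotationally symmetric billiard path,'' together with the length accounting under folding. One must also check that the five folded paths in case (1) are genuinely distinct (so that the billiard path really lacks rotational symmetry) rather than accidentally coinciding, and dually that the single path in case (2) is not unexpectedly symmetric under a smaller rotation.
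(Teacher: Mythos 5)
Your covering-space setup has the same skeleton as the paper's own proof: the paper lifts the core curve (punctured at the edge midpoint of side $s$) to five segments on the necklace and observes that they either close up into five disjoint loops or chain into a single loop of five times the length; your monodromy-plus-primality argument is a clean and rigorous way to justify that dichotomy, which the paper only asserts and illustrates. Your treatment of case (2) is also fine: $\rho$-invariance of the connected lift, pushed through the equivariant folding, gives the order-$5$ rotational symmetry.

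The genuine gap is exactly the point you flagged and then left unresolved, and it is not a routine check: in case (1), ``five folded paths that are rotations of one another'' is perfectly compatible with all five coinciding as sets --- and their coinciding is precisely the statement that the path \emph{is} rotationally symmetric, i.e.\ the opposite of what you want to prove. Disjointness of the five lifts upstairs does not give distinctness of their images downstairs, because the folding is $10$-to-$1$, so your dichotomy alone does not separate the two symmetry types. The paper closes this by tracking Weierstrass points through the folding, using the very equivariance you set up: the deck generator $\rho$ covers the rotation by $2\pi/5$ of the table, hence it does \emph{not} preserve the billiard edge labels but permutes them cyclically; consequently the five lifts of the midpoint of $s$ lie on necklace edges that fold to five \emph{different} edges of the billiard table. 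So in case (1) each folded path passes (twice) through the midpoint of exactly one table edge, whereas a rotationally symmetric path passing through one edge midpoint would have to pass through the midpoints of all five --- contradiction. The same bookkeeping supplies the reflection symmetry you declared ``automatic'': a periodic billiard path bouncing at the midpoint of an edge is carried by the reflection in that edge's perpendicular bisector to its own time reversal, hence is invariant as a set under that reflection; in case (2), having this across all five bisectors gives an independent derivation of the rotational symmetry. Adding this midpoint/edge-label accounting is what your proposal needs to be complete.
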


\begin{figure}[!ht]
  \centering
  \includegraphics[width=0.46\textwidth]{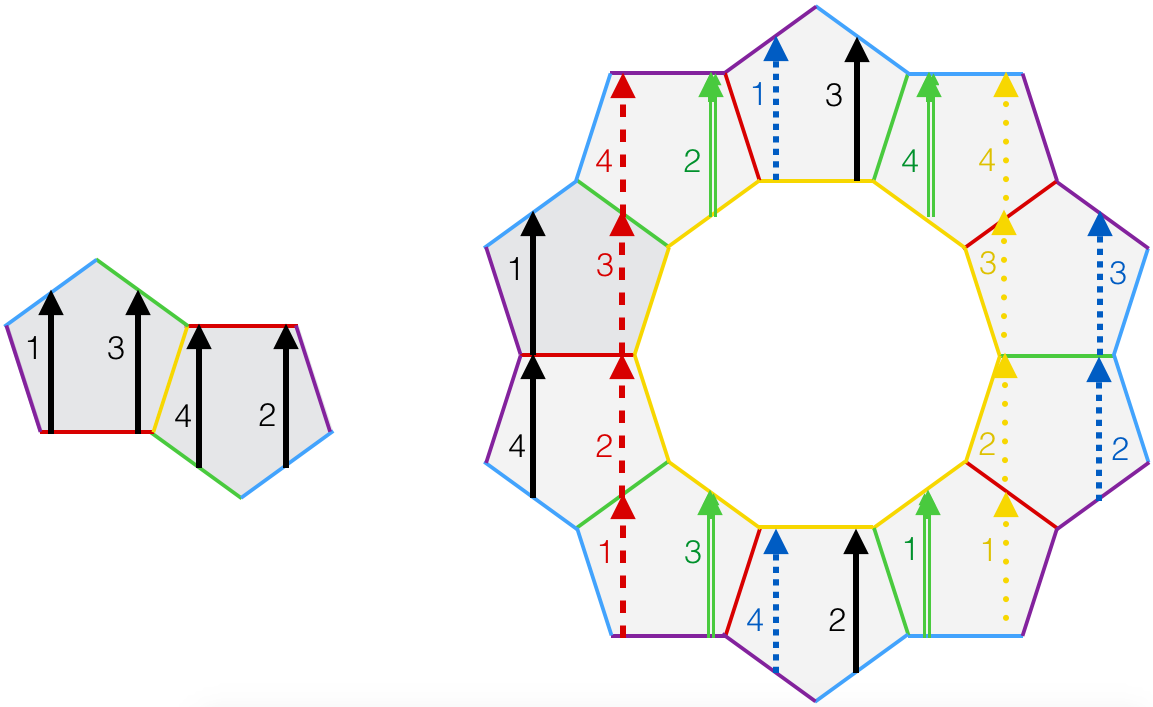} \hfill
  \includegraphics[width=0.46\textwidth]{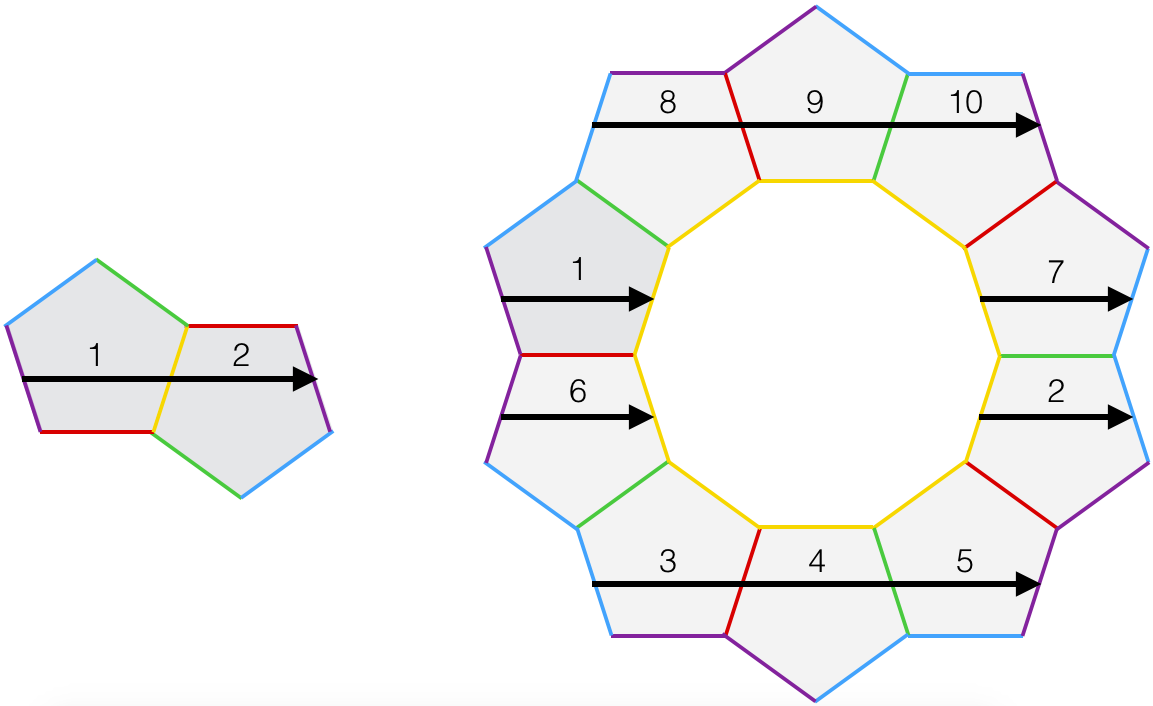}
\caption{Trajectories on the double pentagon lift to two distinct types of trajectories on the necklace. (a) The vertical trajectory lifts to five trajectories of the same length. (b) The horizontal trajectory lifts to a single trajectory five times as long. } \label{fig:necklace-vert-horiz}
\end{figure}

\begin{proof}
  Without loss of generality, we may assume that $p$ is the representative of its homotopy class that passes through the midpoint of two sides $s,t\in\{1,2,3,4,5\}$. Removing the midpoint of $s$, the trajectory is an open segment on the double pentagon.
  When we lift this segment to the necklace, it gives us five corresponding segments, which join midpoints of sides parallel to $s$. When we add in the midpoints, there are two possibilities, by regularity of the cover and the fact that 5 has two factors, 1 and 5. These possibilities are illustrated by the vertical and horizontal trajectories, respectively:
  \begin{enumerate}
    \item We get five disjoint loops on the necklace, each one going through midpoints of sides that are lifts of $s$, each one passing through the midpoint of an edge that is the unfolded image of a different edge of the billiard table. Thus, when the necklace is folded to yield the billiard table, these map to five trajectories that are rotations of each other, each one passing twice through the midpoint of exactly one edge. Thus such trajectories have reflection symmetry across the perpendicular bisector to that edge, but no rotation symmetry.
    \item We get a single loop of five times the length of $p$, which passes through the midpoints of all of the edges of the necklace that are lifts of $s$. Thus, when the necklace is folded to yield the billiard table, these fold to a trajectory that passes through the midpoint of each edge twice, and has reflection symmetry across the perpendicular bisector to every edge; thus, such trajectories have rotation symmetry as well.
  \end{enumerate}
\end{proof}

Examples of pentagon billiard trajectories that lift to a necklace trajectory five times as long as the double pentagon trajectory are in Figure \ref{fig:symmetric}. By construction, they have rotational symmetry, so along with the bilateral symmetry, they have the full symmetry of the dihedral group $D_5$.
Examples of pentagon billiard trajectories that lift to five copies of the double pentagon trajectory on the necklace are in Figure \ref{fig:asymmetric}.  Such paths have only bilateral symmetry.

\begin{lemma}\label{lem:primitive}
  Vectors in $\Lambda$ are primitive, in the sense that the only common factors of $a+b\phi$ and $c+d\phi$ are units in $\mathbf{Z}[\phi]$.
\end{lemma}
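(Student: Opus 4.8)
The plan is to exploit the tree structure from Theorem~\ref{thm:saddles} together with the fact that the generating matrices act by unimodular transformations over $\mathbf{Z}[\phi]$. First I would record that $\mathbf{Z}[\phi]$ is the ring of integers of $\mathbf{Q}(\sqrt 5)$ and is a Euclidean domain, hence a unique factorization domain; this is what makes the phrases ``common factor'' and ``unit'' meaningful, and it lets me speak of a well-defined notion of primitivity for a pair $(a+b\phi,\,c+d\phi)$. The base vector $\sv 10$ is visibly primitive, since any common factor of $1$ and $0$ must divide $1$ and is therefore a unit.

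Next I would check that each $\sigma_i$ lies in $SL_2(\mathbf{Z}[\phi])$: their entries are manifestly in $\mathbf{Z}[\phi]$, and a one-line determinant computation using $\phi^2=\phi+1$ gives $\det\sigma_i = 1$ for $i=0,1,2,3$ (for instance $\det\sigma_1 = \phi^2-\phi = 1$). Consequently every product $m(\sigma)=\sigma_{k_n}\cdots\sigma_{k_1}$ lies in $SL_2(\mathbf{Z}[\phi])$, so both $m(\sigma)$ and its inverse have all entries in $\mathbf{Z}[\phi]$.

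The core of the argument is that left multiplication by a matrix $g\in SL_2(\mathbf{Z}[\phi])$ preserves primitivity. Indeed, let $v=\sv xy$ be primitive and put $w = g v = \sv {x'}{y'}$. Any common divisor $\delta$ of $x'$ and $y'$ divides every $\mathbf{Z}[\phi]$-linear combination of $x'$ and $y'$; since $g^{-1}$ has entries in $\mathbf{Z}[\phi]$ and $g^{-1}w = v$, the entries $x$ and $y$ of $v$ are such combinations, so $\delta$ divides both. As $v$ is primitive, $\delta$ is a unit, and hence $w$ is primitive. Applying this with $g = m(\sigma)$ and the primitive base vector $\sv 10$, Theorem~\ref{thm:saddles} (which guarantees every element of $\Lambda$ has the form $m(\sigma)\sv 10$) yields the claim for all of $\Lambda$.

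I do not expect a genuine obstacle: the only points needing care are verifying $\det\sigma_i=1$, so that the matrices are truly unimodular and the inverses stay integral, and citing that $\mathbf{Z}[\phi]$ is a unique factorization domain so that ``primitive'' is well-defined; everything else is a formal consequence of the tree description. If one preferred to avoid invoking unique factorization entirely, the same proof goes through verbatim by working with the ideal $(x',y')$ rather than a single common divisor $\delta$, showing that $g$ and $g^{-1}$ carry the ideal generated by the components of $v$ to that generated by the components of $w$; since $\sv 10$ generates the unit ideal, so does every vector in $\Lambda$.
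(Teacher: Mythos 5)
Your proposal is correct and takes essentially the same approach as the paper: both write a vector of $\Lambda$ as $\sigma\sv 10$ with $\sigma$ a product of the $\sigma_i$, hence a determinant-$1$ matrix with entries in $\mathbf{Z}[\phi]$, and conclude that any common divisor of the two coordinates must divide $1$. Your ``unimodular matrices preserve primitivity'' step is just a repackaging of the paper's argument, which directly reads off the Bézout relation $(a+b\phi)\cdot x + (c+d\phi)\cdot y = 1$ from the fact that the vector is the first column of a determinant-$1$ matrix; applying $g^{-1}$ (the adjugate) as you do recovers exactly that identity.
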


\begin{proof}
  Since $\sv{a+b\phi}{c+d\phi} = \sigma \sv 10$ for some $\sigma$ in the Veech group of the golden L, $\sv{a+b\phi}{c+d\phi}$ is the first column of a matrix with determinant 1. Thus we have the Biezout relation
  \[
    (a+b\phi)\cdot x +(c+d\phi)\cdot y = 1.
  \]
  Therefore any common divisor of $a+b\phi$ and $c+d\phi$ divides 1, so it is a unit.
\end{proof}

\begin{lemma}
  A primitive periodic orbit on the double pentagon has a primitive cutting word.
\end{lemma}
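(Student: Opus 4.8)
The plan is to argue by contradiction, using the primitivity of the holonomy vector established in Lemma~\ref{lem:primitive}. Suppose the periodic cutting word $w$ of a primitive orbit is imprimitive, say $w = u^k$ as a cyclic word with $k \geq 2$; since $w = (u^m)^p$ whenever $k = mp$, we may assume $k$ is prime. First I would record the effect on the holonomy: reading the cutting word off the trajectory, each repetition of the block $u$ crosses the same edges in the same order and so contributes the same displacement $\mathbf{v}_u$, whence the holonomy vector of the orbit is $\mathbf{v} = k\,\mathbf{v}_u$. Concretely, by the counting dictionary of Definition~\ref{def:count} and Lemma~\ref{lem:correspondence}, the symbol counts $(a_\tau,b_\tau,c_\tau,d_\tau)=(2a,2b,2c,2d)$ of $w$ are $k$ times the corresponding counts of $u$, so each of $2a,2b,2c,2d$ is divisible by $k$. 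The goal is then to promote this to genuine divisibility of the coordinates of $\mathbf{v}$ in $\mathbf{Z}[\phi]$ by a non-unit, contradicting Lemma~\ref{lem:primitive}, since an integer $k\ge 2$ is never a unit of $\mathbf{Z}[\phi]$.

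For an odd prime $k$ this is immediate: $k\mid 2a,2b,2c,2d$ forces $k\mid a,b,c,d$, hence $k$ divides both coordinates $a+b\phi$ and $c+d\phi$ of $\mathbf{v}$, contradicting primitivity. The only remaining case, $k=2$, is the genuine obstacle, and it is precisely where the factor of $2$ in Lemma~\ref{lem:correspondence} (equivalently, the equal-counts phenomenon of Lemma~\ref{lem:buddies}) must be understood. That factor of $2$ comes from the hyperelliptic involution of the double pentagon, which reverses the orbit and pairs each symbol $\ar ab$ with $\ar ba$; this is an \emph{orientation-reversing} symmetry, responsible for the palindromic structure of Corollary~\ref{cor:palindrome}, and not for any rotational symmetry of $w$. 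I would therefore distinguish the two notions carefully: the relation $w=v^2$ as a cyclic word is an \emph{orientation-preserving}, order-$2$ rotational self-symmetry of the core curve, that is, a shift of the curve by $\mathbf{v}/2$ preserving the cyclic sequence of edges crossed, and this is not the same as the reversal symmetry that always holds.

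The heart of the argument is then to observe that such a shift extends to a translation automorphism of the double pentagon. Indeed, if the sequence of edge crossings along the core is invariant under the shift by $\mathbf{v}/2$, this shift carries the cylinder containing the core to itself, respecting the subdivision into polygon pieces and the edge labels, and hence, being compatible with the label-determined gluings, descends to a self-map of the surface that is locally a translation by $\mathbf{v}/2$. The double pentagon lies in $\mathcal{H}(2)$, with a single cone point, so a translation automorphism must fix that point and therefore be the identity; no nonzero translation automorphism exists, and $k=2$ cannot occur. Combining the two cases shows $w$ is primitive. The step I expect to be the main obstacle is exactly this promotion of combinatorial periodicity to an honest translation symmetry in the case $k=2$. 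An alternative route that sidesteps the automorphism bookkeeping, and in fact handles all $k\ge 2$ uniformly, is to invoke that the double pentagon is a Veech surface: in a periodic direction the first-return map to a transversal is an interval exchange all of whose data lie in $\mathbf{Z}[\phi]$, so the sub-block holonomy $\mathbf{v}_u=\mathbf{v}/k$ lies in $\mathbf{Z}[\phi]^2$, which contradicts Lemma~\ref{lem:primitive} directly.
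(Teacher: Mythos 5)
Your overall strategy --- derive that the holonomy vector of the orbit is divisible by $k\ge 2$ in $\mathbf{Z}[\phi]^2$ and contradict Lemma~\ref{lem:primitive} --- is exactly the paper's (its proof is a two-line version of this), and your odd-$k$ case is fine. But your treatment of $k=2$, which you rightly call the heart of the matter, has a genuine gap. The assertion that $w=v^2$ gives ``a shift of the curve by $\mathbf{v}/2$ preserving the cyclic sequence of edges crossed'' conflates the combinatorial half-period shift $x_i\mapsto x_{i+n}$ of the crossing points (which preserves labels by hypothesis) with the metric shift by half the holonomy; these agree only if the two blocks of $v$ have equal displacement, which is precisely the unproved claim in your first paragraph (``each repetition \ldots contributes the same displacement'' does not follow from the blocks crossing the same edge labels, because the displacement between two crossings of the same edge depends on \emph{where} on that edge the crossings occur). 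And even granting a self-map of the cylinder, the claim that it ``descends to a self-map of the surface'' is asserted, not proved: a translation of one cylinder has no a priori extension over the complement. The fallback via ``Veech surface / IET data in $\mathbf{Z}[\phi]$'' likewise asserts rather than proves the key fact $\mathbf{v}/k\in\mathbf{Z}[\phi]^2$.

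The fix is cheap and makes the counting detour (and hence the $k=2$ anomaly) unnecessary. On a translation surface glued from polygons by translations, the holonomy of a closed orbit is the sum, over its edge crossings, of the gluing translation vectors; these are determined by the edge labels and lie in $\mathbf{Z}[\phi]^2$ in golden-L coordinates (apply $P$ to pass to the double pentagon). If the cutting word is $u^k$, this sum is $k$ times the gluing sum of a single block: writing the displacement of block $i$ as $(\text{gluing sum}) + \bigl(\mathrm{pos}(x_{(i+1)n})-\mathrm{pos}(x_{in})\bigr)$, the position terms telescope to zero around the closed orbit, so no claim about individual block displacements is needed. Hence $\mathbf{v}\in k\,\mathbf{Z}[\phi]^2$, and since a rational integer $k\ge 2$ has norm $k^2$ and so is not a unit of $\mathbf{Z}[\phi]$, this contradicts Lemma~\ref{lem:primitive} for every $k\ge 2$ at once. (Alternatively, your equal-displacement claim can be justified outright: differences of block displacements are parallel to the flow yet equal to vectors along the crossed edge, hence vanish. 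But then it yields $x_n=x_0$, i.e.\ the orbit is a $k$-fold cover of a shorter closed orbit, contradicting primitivity of the \emph{orbit} directly --- no counts, no automorphism, and no appeal to Lemma~\ref{lem:primitive} needed.)
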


Here ``primitive periodic orbit'' means that we traverse exactly one period of the trajectory, and ``primitive cutting word'' means that the word does not consist of repeats of a shorter word.

\begin{proof}
  Suppose the cutting word is not primitive. Then the corresponding holonomy vector $[a+b\phi, c+d\phi]$ is not primitive as a vector $[a,b,c,d]\in\R^4$. Since by Lemma \ref{lem:primitive} vectors $[a+b\phi, c+d\phi]$ are always primitive, this is impossible.
\end{proof}

\begin{definition}
  There are two cylinders of periodic billiard trajectories parallel to the sides of the pentagon. We call the central trajectory in the short cylinder the \emph{mini pentagon} and the central trajectory in the long cylinder the \emph{star}. They are illustrated in Figure \ref{fig:two-bounces}.
\end{definition}

\begin{lemma}\label{lem:double-v}
  If a periodic billiard trajectory passes through the midpoint of an edge once, it passes through it twice, except for the mini pentagon and the star.
\end{lemma}

One can observe this in the trajectories in Figures \ref{fig:nice_examples}, \ref{fig:buddies}, \ref{fig:symmetric}, \ref{fig:asymmetric}.

\begin{proof}
  Suppose that a trajectory passes through the midpoint of an edge; without loss of generality, let it be the horizontal edge. We can start at this midpoint and follow the trajectory forward and backward from the bounce until they have each traversed half the length of the trajectory (Figure \ref{fig:two-bounces}). The two half-trajectories are reflection-symmetric across the vertical line of symmetry of the pentagon. Thus when they meet again, it is along this line of symmetry. If they meet at angle $\pi$, the segments that meet are horizontal, so the trajectory is parallel to the edges of the pentagon. Otherwise, the trajectory changes direction at the meeting point, so the meeting point must be a bounce, which must therefore be at the same midpoint.
\end{proof}

\begin{figure}[!ht]
\centering
\includegraphics[height=0.25\textwidth]{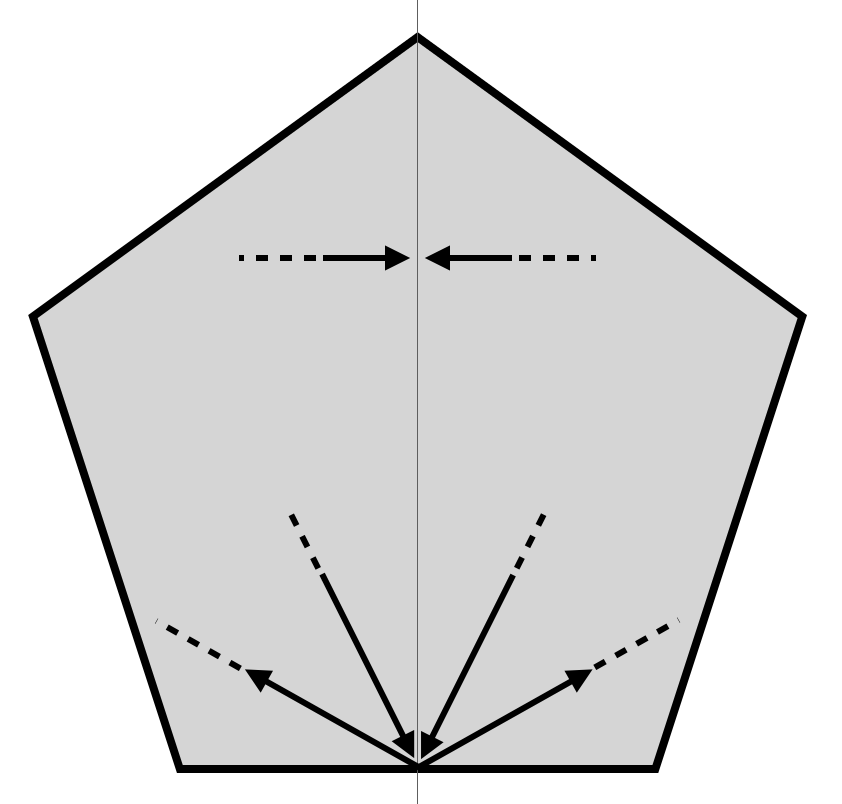}
\qquad\qquad
\includegraphics[height=0.25\textwidth]{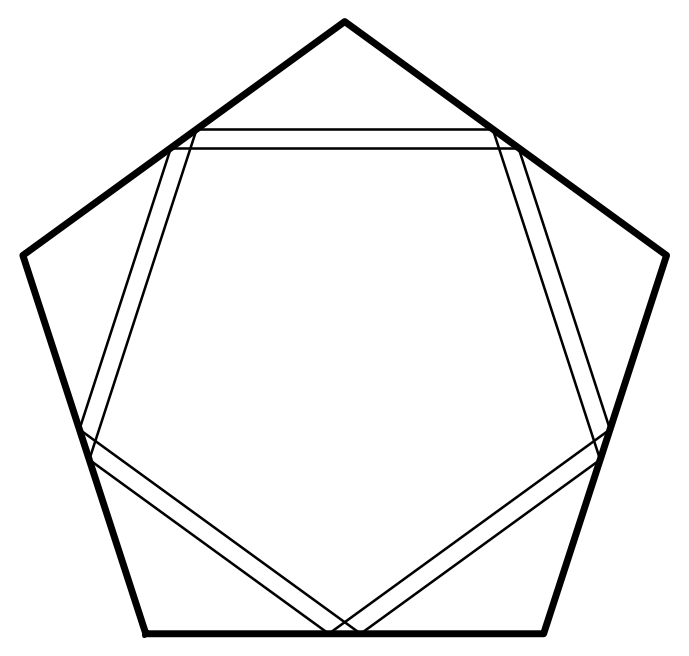}
\includegraphics[height=0.25\textwidth]{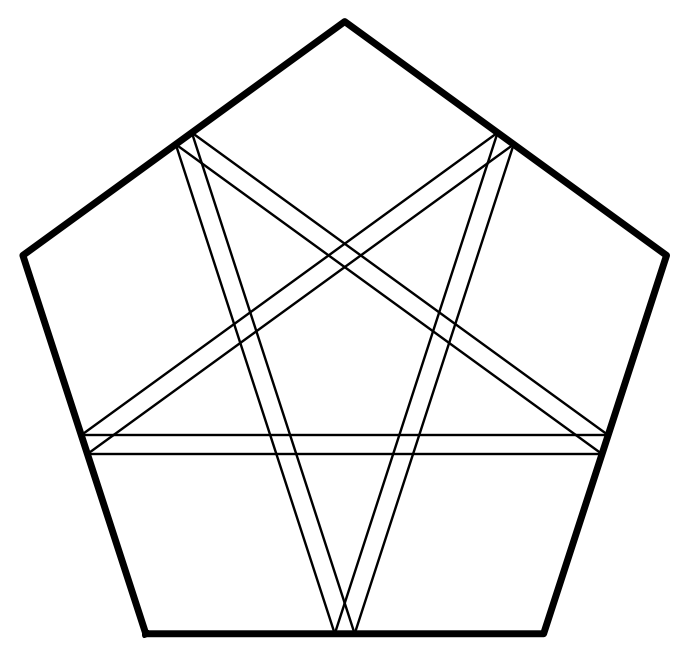}

\caption{Left: The two possibilities for a trajectory through a midpoint (Lemma \ref{lem:double-v}). Right: Representatives of the two cylinders of trajectories parallel to an edge of the pentagon; the core curves are the mini pentagon and star. This illustrates the length doubling proven in Lemma \ref{lem:corecurve}.} \label{fig:two-bounces}
\end{figure}

\begin{lemma}\label{lem:only-odd}
  The mini pentagon and the star, which have period 5, are the only trajectories with odd period.
\end{lemma}

\begin{proof}
  We consider the central periodic trajectory in a cylinder, which passes through a midpoint; without loss of generality, let it be of the horizontal side. If the trajectory has a horizontal segment, it must be the mini pentagon or the star. Otherwise, none of its segments are horizontal, so each one has a mirror-symmetric segment also part of the orbit. Therefore, the trajectory has an even number of segments.
\end{proof}

\begin{lemma}\label{lem:corecurve}
  For a periodic pentagon billiard trajectory, every trajectory in the homotopy class is the same length, except for the homotopy classes of the mini pentagon and the star, for which the trajectory through the midpoints is half as long as all the others.
\end{lemma}

\begin{proof}
  Consider a cylinder of periodic trajectories, and its central trajectory, with a designated direction of travel. We may assume that it passes through the midpoint of the horizontal side. Then consider a parallel trajectory inside the cylinder, hitting the horizontal side slightly to the left of its midpoint (see Figure \ref{fig:two-bounces}). At each bounce, it changes which side of the central trajectory it is on. Thus if the period of the central trajectory is even, the parallel trajectory closes up at the same time. If the period of the central trajectory is odd, the parallel trajectory has to repeat the loop before closing up, so its period is twice that of the central trajectory. By Lemma \ref{lem:only-odd}, the mini pentagon and star are the only periodic trajectories with odd period.
\end{proof}

In the introduction of \cite{DFT}, the authors mention that for a periodic billiard in the pentagon, ``if the period is odd, nearby parallel trajectories have the length and the period twice as large.'' Lemma \ref{lem:corecurve} shows that the \emph{only} such trajectories are the mini pentagon and the star.

\begin{proposition}\label{prop:corecurve}
  The length of a periodic billiard sequence corresponding to a trajectory with rotational symmetry is five times the length of the corresponding cutting word for the double pentagon, except for the horizontal trajectory, when it is half of this.
\end{proposition}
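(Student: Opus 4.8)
The plan is to combine Theorem~\ref{thm:geom-symm}, which supplies the factor of five coming from rotational symmetry, with Lemma~\ref{lem:corecurve}, which supplies the factor of one half in the single exceptional case. First I would fix a periodic direction whose billiard trajectory has rotational symmetry, let $p$ denote the core curve of the corresponding cylinder on the double pentagon, and write $|p|$ for the length of its cutting word, equivalently its combinatorial period.

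For the factor of five I would invoke Theorem~\ref{thm:geom-symm}: rotational symmetry is precisely its second case, in which the preimage of $p$ on the necklace is a single periodic loop whose geometric length is five times that of $p$. Since the necklace is a translation cover of the double pentagon, directions are preserved under lifting and the number of edge crossings per unit length is constant, so the lifted loop has combinatorial length $5|p|$. Folding the necklace down to the billiard table sends each edge crossing to exactly one bounce; when this folding restricts to a bijection on the loop, the billiard cutting sequence has the same length as the necklace loop, namely $5|p|$. This settles the generic case.

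For the exception, note that the horizontal direction $\sv10$ is parallel to an edge of the pentagon: under $P$ the vector $[1,0]$ is carried to a unit horizontal edge of the double pentagon, as recorded when $P$ was constructed. Moreover, reduced to the fundamental sector of directions, the horizontal is the only rotationally symmetric direction with this property. Lemma~\ref{lem:corecurve} then applies: being parallel to an edge, the central trajectory has half the length of its neighbors, because the necklace can be folded in half so that the two halves of the loop sit atop one another with matching orientation. Hence for the horizontal core curve the folding to the billiard is two-to-one rather than one-to-one, and the billiard cutting sequence has length $\tfrac12\cdot 5|p| = \tfrac52|p|$, which is half of five times the double pentagon length, as claimed.

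The main obstacle is the bookkeeping that distinguishes the one-to-one folding of the generic case from the two-to-one folding of the horizontal case. Concretely, I must use the ``if and only if'' in Lemma~\ref{lem:corecurve} to guarantee that there is no hidden halving for any rotationally symmetric direction other than the horizontal one, and conversely that the parallel-to-an-edge condition forces exactly the two-to-one overlap invoked above. Pinning down precisely \emph{when} the necklace loop folds onto itself, rather than merely checking the length factor, is the delicate step, and it is where the Weierstrass (midpoint) structure of the core curve must be used to control the folding.
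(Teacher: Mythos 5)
Your proposal is correct and follows essentially the same route as the paper: the paper's proof is the one-line ``This follows directly from Lemma~\ref{lem:corecurve},'' with the factor of five coming implicitly from case (2) of Theorem~\ref{thm:geom-symm} and the halving for the edge-parallel (horizontal) trajectory supplied by Lemma~\ref{lem:corecurve}, exactly as you argue. Your version simply makes explicit what the paper leaves implicit, namely the one-to-one versus two-to-one behavior of the necklace-to-billiard folding and the fact that, up to the $10$-fold symmetry, the horizontal is the unique edge-parallel periodic direction.
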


\begin{proof}
  This follows directly from Lemma \ref{lem:corecurve}.
\end{proof}

\subsection{Alternative proof of the Combinatorial Period Theorem}\label{subsec:extended}

We now give an alternative proof of the Combinatorial Period Theorem \ref{thm:double}, using $4$-dimensional integer vectors rather than $2$-dimensional vectors over $\Z[\phi]$. This perspective gives a faster way of computing the orbits because it works over the integers.

\begin{definition}
  Let $B =\{\ar 12, \ar 21, \ar 23, \ar 32, \ar 34, \ar 43, \ar 45, \ar 54\}$ and let $B^*$ be the set of all finite words in $B$. Note that our periodic sequences consist of a language within $B^*$ of words that are admissible, i.e. each symbol is compatible with the one before it (e.g. $\ldots\ar ab \ar bc\ldots$).

  We define two maps (recall Definition \ref{def:count} for $a_w$ etc.):
  \begin{align*}
    m: \N[\phi]^2         & \to \N^4          &  & n: B^*  \to \N^4                                  \\
    \bv{a+b\phi}{c+d\phi} & \mapsto  \fv abcd &  & \ \ \ \ \ \ \  \ \ w \mapsto \fv {a_w}{b_w}{c_w}{d_w}
  \end{align*}
  Since $\Z[\phi]$ has $\{1,\phi\}$ as a $\Z$-module basis, $m$ is invertible, so its inverse $m^{-1}: \N^4 \to \N[\phi]^2 $ is also defined.

  If $\vv=\bv{a+b\phi}{c+d\phi} \in\Lambda$, define its complexity as $c(\vv) = a+b+c+d$.

  Given a periodic word $w\in B^*$, define $\ell(w)$ to be its length.
\end{definition}

\begin{proof}[Extended proof of Theorem \ref{thm:double}]
  Now Theorem \ref{thm:double} is: If $\vv$ is the vector in $\Lambda$ corresponding to a periodic trajectory with associated cutting sequence $w$, then $\ell(w)=2c(\vv)$. In our new notation, we can state a stronger version: $n(w) = 2m(\vv)$, which we prove here.

  The proof is by induction on the depth in the tree, and the base case is the same as in the previous proof: the result holds for the vector $\vv = \sv 10$ and for the associated trajectory $w = \ar 12 \ar 21$. This is because in the new notation, $n(w) = \sfv 2000$ and $m(\vv) = \sfv 1000$, so $n(w) = 2m(\vv)$.

  Define $\hat{\Lambda} = m(\Lambda)$, which is the set of vectors $\sfv abcd$ such that $\bv{a+b\phi}{c+d\phi}\in\Lambda$.

  Now for each $i=0,1,2,3$, define  $\hat{\sigma_i}:\hat{\Lambda}\to\hat{\Lambda}$ such that $\hat{\sigma_i} = m^{-1}\circ \sigma_i \circ m$. These express the effects of the matrices $\sigma_i$, just in $4$-vector coefficient coordinates instead of $2$-vector coordinates.

  These are linear maps, and we can compute them explicitly from Lemma \ref{lem:sigmas-on-abcd}:

  $\hat{\sigma_0}= \begin{bmatrix} 1&0&0&1  \\ 0&1&1&1 \\ 0&0&1&1 \\ 0&0&0&1 \end{bmatrix}, \
    \hat{\sigma_1}=\begin{bmatrix} 0&1&0&1  \\ 1&1&1&1 \\ 1&0&0&1 \\ 0&1&1&1 \end{bmatrix}, \
    \hat{\sigma_2}=\begin{bmatrix}  0&1&1&0 \\ 1&1&0&1 \\ 0&1&0&1 \\ 1&1&1&1  \end{bmatrix}, \
    \hat{\sigma_3}=\begin{bmatrix}  1&0&0&0 \\ 0&1&0&0 \\  0&1&1&0\\ 1&1&0&1 \end{bmatrix}$.

Now notice that by Lemmas \ref{lem:rs-on-abcd} and \ref{lem:sigmas-on-abcd}, we have: $n(r_i(w))=\hat{\sigma_i}(n(w))$. This gives the induction step: applying move $i$ takes
$$w \mapsto r_i(w) = w', \qquad \vv \mapsto \sigma_i(\vv) = \vv'.$$
We want to show that $n(w') = 2m(\vv')$.

We have $n(w')=n(r_i(w))=\hat{\sigma_i}(n(w))$, and
$2m(\vv')=2m(\sigma_i(\vv))=2\hat{\sigma_i}(m(\vv))$.
By the induction hypothesis, $n(w) = 2m(\vv)$, so by linearity of $\hat{\sigma_i}$, we can conclude that $n(w') = 2m(\vv')$.
\end{proof}

\subsection{Summary}

We now state what we know about periodic trajectories on the double pentagon surface and the pentagon billiard table, and say how to determine this information from the direction:

\begin{theorem}\label{thm:hecke}
  For a given periodic direction on the golden L, we can give all of the following information about the associated double pentagon and pentagon billiard trajectories, as follows.

  Find the long saddle connection vector $\mathbf{v} = [a+b\phi,c+d\phi]$, using Theorem \ref{thm:abcd}.

  \begin{enumerate}
    \item The combinatorial period of the trajectory on the double pentagon surface is $2(a+b+c+d)$.
    \item Transform $\mathbf{v}$ into a saddle connection vector on the double pentagon surface (using the matrix $P$ from Definition \ref{def:stretch}). The Euclidean length of the trajectory on the double pentagon surface is the length of the resulting vector $P\mathbf{v}$.
  \end{enumerate}

  Find the tree word associated to $\mathbf{v}$, such as $120$. This tells us the product of $\sigma_i$s that takes $[1,0]$ to $\mathbf{v}$. Starting at $A$ in the graph in Figure \ref{colorgraph}, follow \emph{backward} the arrows associated to each $\sigma_i$ in the product, read \emph{backward}, and determine the ending node in the graph.

  \begin{enumerate}
    \item[3.] If the ending node is $A$, $B$, $C$, $D$, or $E$, then the trajectory has the full symmetry of the dihedral group of the regular pentagon, and  the combinatorial period, and the euclidean length, of the pentagon billiard trajectory are five times that of the associated double pentagon trajectory computed above (except for the two trajectories parallel to the edges of the table through midpoints, in which case they are 2.5 times that above).
    \item[4.] On the other hand, if the ending node is $F$,  then the trajectory has only bilateral symmetry, and its combinatorial period, and the euclidean length, of the pentagon billiard trajectory are the same as the associated double pentagon trajectory computed above.
  \end{enumerate}

\end{theorem}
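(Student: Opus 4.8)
The plan is to assemble this summary from the specific results already established, since each of the four numbered conclusions is a restatement or an immediate consequence of an earlier theorem. The only genuine work is to verify that the two independent characterizations of symmetry—the algebraic one coming from the Veech-group graph of Figure \ref{colorgraph} and the geometric one coming from lifting to the necklace—agree, and to track the single exceptional case in which the length scaling factor is $2.5$ rather than $5$.

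First I would dispatch the period claim (1) by citing the Combinatorial Period Theorem \ref{thm:double} directly: given the long saddle connection vector $\mathbf{v}=[a+b\phi,c+d\phi]$ produced by Theorem \ref{thm:abcd}, that theorem gives combinatorial period $2(a+b+c+d)$ on the double pentagon. For the Euclidean length claim (2), I would invoke the affine equivalence: the matrix $P$ of Definition \ref{def:stretch} carries the golden L to the double pentagon and carries saddle connection vectors to saddle connection vectors (the lemma following Definition \ref{def:stretch}, together with Corollary \ref{cor:transform-periodicity}). Hence the saddle connection representing the trajectory on the double pentagon is the image vector $P\mathbf{v}$, whose Euclidean length is by definition the length of the double pentagon trajectory.

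For claims (3) and (4), I would first read off the symmetry type from Theorem \ref{thm:symm}: following the tree word backward through the graph of Figure \ref{colorgraph} starting at $A$, landing on $A$ through $E$ yields order-$5$ rotational symmetry and landing on $F$ yields none, while bilateral symmetry is present in both cases because the core curve passes through edge midpoints. I would then supply the length factors from the geometric picture of Theorem \ref{thm:geom-symm}: in the rotationally symmetric case the preimage on the necklace is a single loop five times as long, so the billiard trajectory—and hence both its combinatorial period and its Euclidean length—is five times that of the double pentagon; in the non-rotationally-symmetric case the preimage splits into five loops each projecting one-to-one, so both quantities equal those of the double pentagon. The consistency of the two criteria is exactly the correspondence established inside the proofs of Theorems \ref{thm:symm} and \ref{thm:geom-symm}: nodes $A$ through $E$ correspond to the single-loop case (where $a=1,c=2$ and the Five Ls sit horizontally) and node $F$ corresponds to the five-loop case (where $a=c=0$ and they sit vertically).

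The one point requiring genuine care—and the place I expect to be the main obstacle—is the parenthetical exception in (3) for the two trajectories parallel to an edge through midpoints, where the factor is $2.5$ rather than $5$. Here I would invoke Lemma \ref{lem:corecurve} and Proposition \ref{prop:corecurve}: the central (core-curve) trajectory parallel to an edge has exactly half the length of a generic trajectory in its cylinder, because the necklace can be folded in half so that the two halves of the trajectory coincide with matching orientation. Applying this halving to the otherwise five-fold scaling produces the factor $5/2$. Assembling these citations—Theorem \ref{thm:double}, the $P$-equivalence, Theorem \ref{thm:symm}, Theorem \ref{thm:geom-symm}, and Proposition \ref{prop:corecurve}—completes the proof.
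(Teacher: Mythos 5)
Your proposal is correct and takes essentially the same route as the paper: the paper's proof is a bare assembly of citations (part (1) from Theorem~\ref{thm:double}, part (2) from Proposition~\ref{prop:scvectors}, parts (3)--(4) from Theorem~\ref{thm:symm} and Proposition~\ref{prop:corecurve}), and you assemble the same prior results, merely adding Theorem~\ref{thm:geom-symm} and Lemma~\ref{lem:corecurve} to make the length-scaling factors and the exceptional $2.5$ case explicit. Your version is somewhat more careful than the paper's one-line proof, but it is the same argument, not a different one.
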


\begin{proof}
  \begin{enumerate} Part (1) is Theorem~\ref{thm:double}; part (2) is Proposition~\ref{prop:scvectors}, and part (3) is Theorem~\ref{thm:symm} and Proposition~\ref{prop:corecurve}, and (4) is Theorem~\ref{thm:symm}.
  \end{enumerate}
\end{proof}

\section{Applications and future directions}\label{sec:further}

Our \verb|Sage| program computes the periodic trajectory for the golden L surface, the double pentagon surface and the pentagon billiard table, associated to any tree word. As a result of exploring these trajectories, we have additional results, conjectures and questions.

\subsection{Combinatorial periods achieved on the surface and table}

By symmetry, all periodic trajectories on the double pentagon have periods that are multiples of 2, and all periodic trajectories with rotational symmetry on the pentagon billiard table have periods that are multiples of 10. It turns out that in both cases,  \emph{every} such number does in fact arise as a periodic trajectory period, via simple constructions.

\begin{proposition}\label{prop:evens}
  Every positive even number arises as the combinatorial period of a double pentagon trajectory.
\end{proposition}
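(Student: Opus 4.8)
The plan is to exhibit, for each positive integer $n$, an explicit periodic direction whose associated short double pentagon trajectory has combinatorial period exactly $2n$. By the Combinatorial Period Theorem \ref{thm:double}, a short trajectory with holonomy vector $[a+b\phi, c+d\phi]\in\Lambda$ has combinatorial period $2(a+b+c+d)$, so it suffices to produce, for every $n\ge 1$, a vector in $\Lambda$ of the form $[a+b\phi,c+d\phi]$ with $a+b+c+d = n$.

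The natural family to use is the one generated by iterating a single matrix. First I would compute $\sigma_3^{\,k}\sv 10$. Since $\sigma_3 = \sm 10\phi1$ fixes the first coordinate and adds $\phi$ times the first coordinate to the second, an easy induction gives $\sigma_3^{\,k}\sv 10 = \sv 1{k\phi}$ for all $k\ge 0$. By the Tree Theorem \ref{thm:saddles} this vector lies in $\Lambda$ (it is the depth-$k$ node whose tree word is a string of $3$'s). Writing it in the canonical form $[a+b\phi,c+d\phi]$ of Theorem \ref{thm:abcd} gives $a=1$, $b=0$, $c=0$, $d=k$, so that $a+b+c+d = k+1$.

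Then I would conclude as follows: the short trajectory in this direction has combinatorial period $2(k+1)$ by Theorem \ref{thm:double}, and as $k$ ranges over $0,1,2,\dots$ the quantity $2(k+1)$ ranges over all positive even numbers. Taking $k=n-1$ realizes the period $2n$ for each $n\ge 1$, which proves the claim.

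There is essentially no serious obstacle here; the only points to verify carefully are the induction $\sigma_3^{\,k}\sv 10 = \sv 1{k\phi}$, which is a one-line matrix computation, and the bookkeeping that this vector really is a short-cylinder (equivalently, long saddle connection) vector in $\Lambda$ rather than merely a lattice vector, which is immediate from the Tree Theorem. One could equally run the symmetric family $\sigma_1^{\,k}\sv 10 = \sv {k\phi}1$, which by the tree symmetry of Theorem \ref{thm:tree_symm} yields the same periods, so the construction is robust.
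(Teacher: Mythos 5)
Your main argument is correct, and it is essentially the paper's own proof reflected across the line $y=x$: the paper uses the tree words $10^n$, giving vectors $\sigma_0^n\sigma_1\sv 10 = \sv{(n+1)\phi}{1}$ and period $2(n+2)$ (with the horizontal trajectory handled separately for period $2$), while you use the tree words $3^k$, giving $\sigma_3^k\sv 10 = \sv{1}{k\phi}$ and period $2(k+1)$. By Theorem~\ref{thm:tree_symm} (swap the first digit $3\to 1$ and the remaining digits $3\to 0$), your family consists of literally the same trajectories as the paper's, so the two proofs coincide up to this symmetry; if anything, yours is marginally cleaner because the single family $3^k$ covers period $2$ uniformly at $k=0$ instead of requiring a separate base case.

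One correction to your closing remark: the claim that $\sigma_1^{k}\sv 10 = \sv{k\phi}{1}$ is false for $k\geq 2$. For example, $\sigma_1^2 \sv 10 = \sigma_1 \sv{\phi}{1} = \sv{1+2\phi}{2\phi} \neq \sv{2\phi}{1}$; the matrix $\sigma_1$ is elliptic-like on that ray, not a shear fixing the direction $\sv{\phi}{1}$. The vectors $\sv{k\phi}{1}$ do lie in $\Lambda$, but they arise as $\sigma_0^{k-1}\sigma_1\sv 10$ (tree word $10^{k-1}$, i.e.\ the paper's family), not as $\sigma_1^k\sv 10$. This does not damage your proof, since the $\sigma_3^k$ family alone suffices, but the "robustness" remark as stated relies on a wrong identity.
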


\begin{proof}
  A horizontal trajectory has combinatorial period 2, and the trajectory with tree word $10^n$ has period $2n+2$. Geometrically, we start with the period-4 trajectory whose tree word is 1, and do repeated horizontal Dehn twists; each twist increases the period by 2 (see Figure \ref{fig:all_evens}). Algebraically, by Theorem \ref{thm:abcd}, the trajectory corresponding to tree word $1\cdot 0^n$ has corresponding long saddle connection vector $\sigma_0^n \sigma_1 \sv 10 = \sv {(n+1)\phi} 1$, so by Theorem \ref{thm:double}, the period is $2((n+1)+1) = 2(n+2)$.
\end{proof}

\begin{figure}[!ht]
  \begin{center}
    \includegraphics[width=0.19\textwidth]{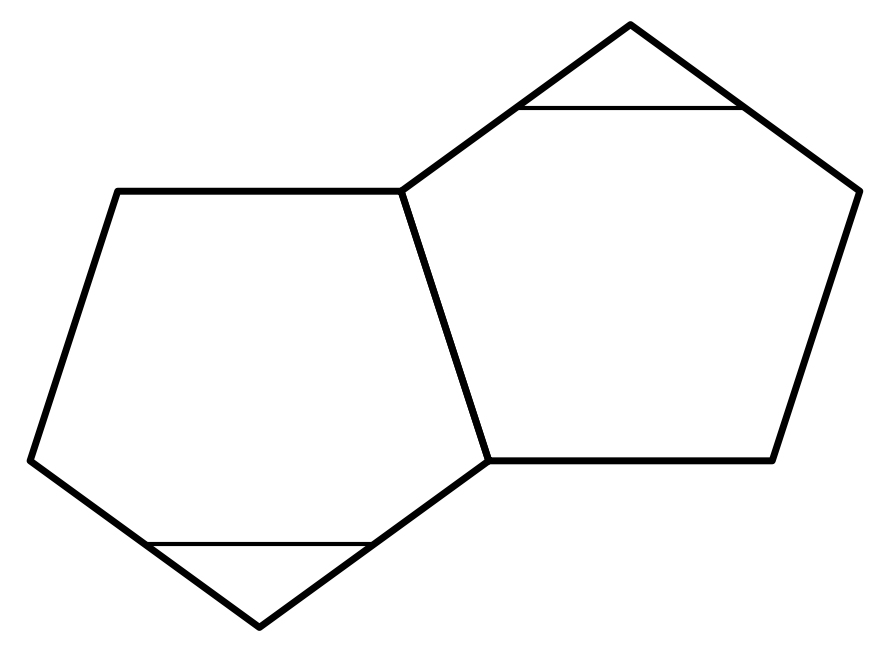}
    \includegraphics[width=0.19\textwidth]{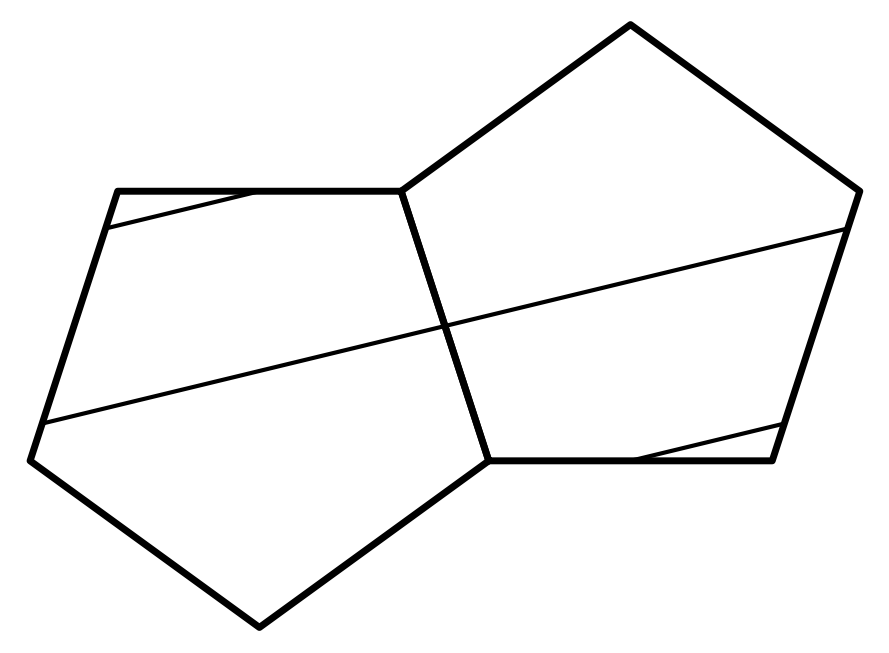}
    \includegraphics[width=0.19\textwidth]{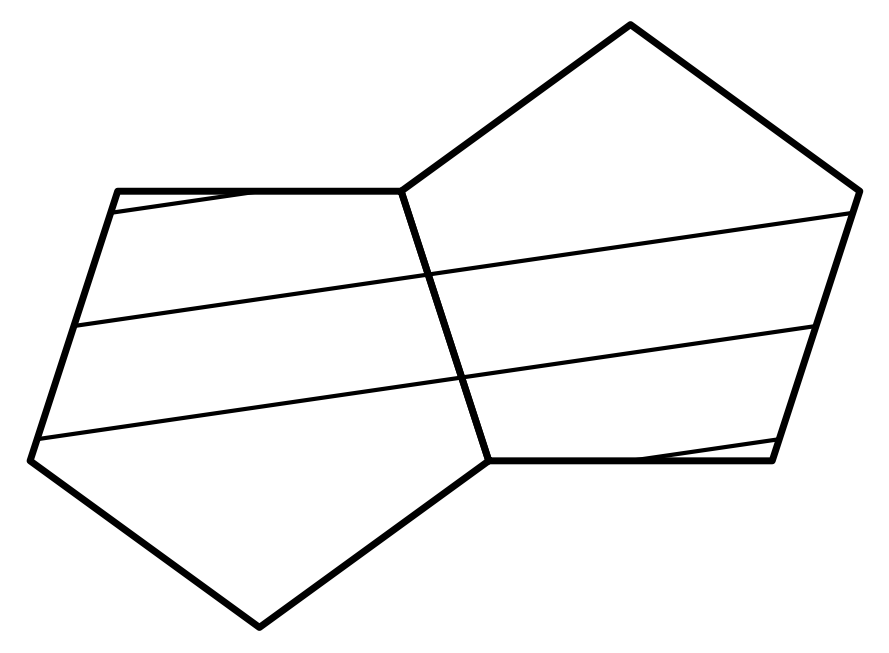}
    \includegraphics[width=0.19\textwidth]{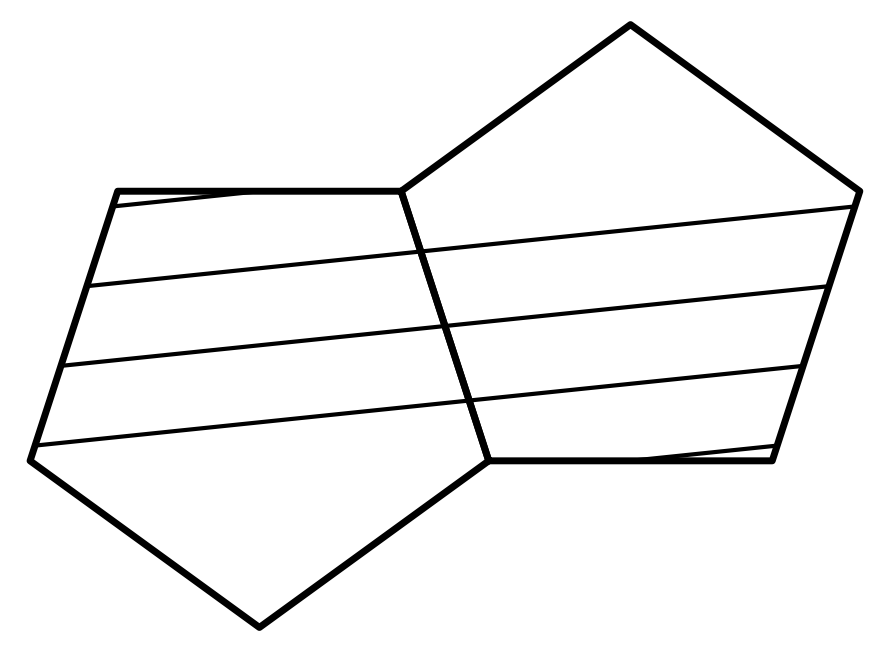}
    \includegraphics[width=0.19\textwidth]{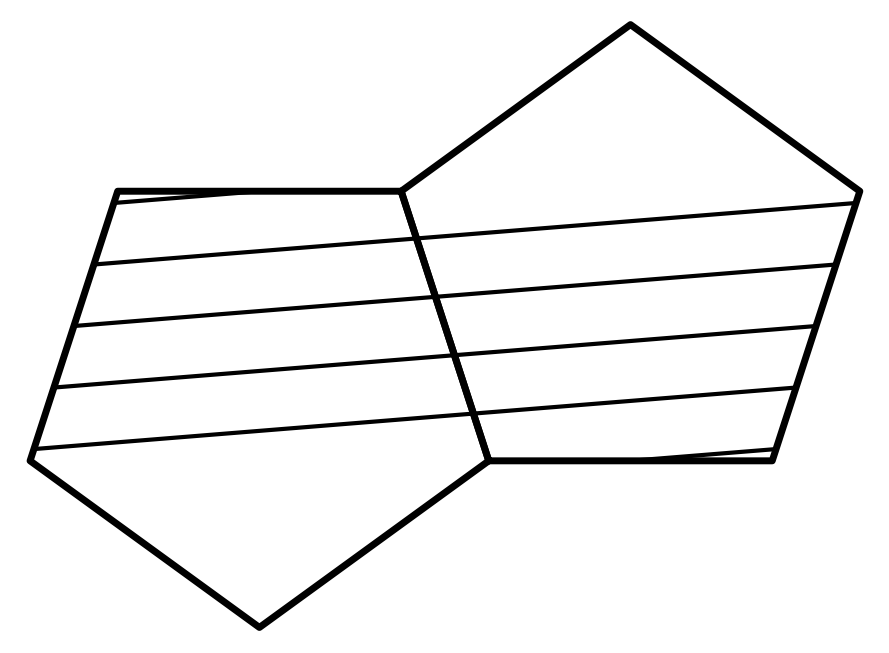}
    \caption{Short periodic paths on the double pentagon, with combinatorial period 2, 4, 6, 8, 10, respectively, corresponding to tree words 0, 1, 10, 100, 1000, respectively. Continuing this construction leads to paths with every even period. \label{fig:all_evens}}
  \end{center}
\end{figure}

\begin{proposition}\label{prop:10}
  Every positive multiple of 10 arises as the combinatorial period of a rotationally symmetric pentagon billiard trajectory.
\end{proposition}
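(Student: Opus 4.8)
The plan is to reduce the statement to a purely double‑pentagon question and then exhibit explicit Dehn‑twist families modeled on the proof of Proposition~\ref{prop:evens}. By Theorem~\ref{thm:hecke}(3), a rotationally symmetric billiard trajectory whose direction is not parallel to an edge has combinatorial period exactly five times that of the corresponding double pentagon trajectory. Hence to realize billiard period $10m$ it suffices to produce a rotationally symmetric double pentagon trajectory of combinatorial period $2m$; by Theorem~\ref{thm:double} this means producing, for each $m$, a periodic direction whose short or long holonomy vector $[a+b\phi,c+d\phi]$ has the right coordinate sum ($a+b+c+d=m$ for the short trajectory, $a+2b+c+2d=m$ for the long one) and whose tree word lands on one of the nodes $A,\dots,E$ of Figure~\ref{colorgraph} under the recipe of Theorem~\ref{thm:symm}.

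First I would take the backbone family $1\,0^n$ from Proposition~\ref{prop:evens}, whose short holonomy vector is $\sigma_0^n\sigma_1\sv 10=\sv{(n+1)\phi}{1}$, of coordinate sum $n+2$. Running the tree word $1\,0^n$ backward through Figure~\ref{colorgraph} shows that $\sigma_0^{-1}$ cyclically permutes $\{A,B,C,D,E\}$ with period $5$ while fixing $F$, so this family is rotationally symmetric exactly when $n\not\equiv 1\pmod 5$. This already realizes billiard period $10m$ for every $m\ge 2$ with $m\not\equiv 3\pmod 5$. The skipped class $m\equiv 3\pmod 5$ (the values of $n$ landing on $F$) I would recover in two pieces. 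The \emph{long} trajectories of the same directions $1\,0^n$ have period $2(2n+3)$ by Theorem~\ref{thm:double} and carry the same node as the short ones, so they cover the odd values $m\equiv 3\pmod{10}$. For the even values I would use a second family $2\,0^n$, with short holonomy vector $\sigma_0^n\sigma_2\sv 10=\sv{n+(n+1)\phi}{\phi}$ and coordinate sum $2n+2$; here $(c,d)=(0,1)$ is fixed, so each trailing $0$ raises the period by exactly $2$, and the backward walk shows rotational symmetry precisely when $n\not\equiv 0\pmod 5$, which is exactly the complementary even residue class needed.

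The one genuinely exceptional case is $m=1$, i.e. billiard period $10$. The only rotationally symmetric double pentagon trajectory of period $2$ is the horizontal one, which is parallel to an edge; by Lemma~\ref{lem:corecurve} its core curve has billiard period only $5$. Here I would invoke the same lemma in the other direction: the non‑core trajectories parallel to that edge have period $2\cdot 5=10$ and retain rotational symmetry (they lose only the bilateral symmetry), supplying the missing period $10$.

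The main obstacle is bookkeeping rather than any single hard idea. The rotational‑symmetry condition is governed by a period‑$5$ cycle on the node graph of Figure~\ref{colorgraph}, and this cycle is out of phase with the arithmetic progression of periods produced by any one Dehn‑twist family, so no single family $k\,0^n$ can be both rotationally symmetric for all $n$ and hit every residue: indeed a clean period‑$2$ increment forces the second coordinate to equal $1$, which pins us to the family $1\,0^n$ and its unavoidable $F$ at $n\equiv 1$. The real work is therefore to choose a small collection of families (short and long, with bases $1$ and $2$) whose rotationally symmetric members jointly cover every residue of $m$ modulo $10$, to verify the node computation of Theorem~\ref{thm:symm} in each case, and to dispose separately of the small edge‑parallel case $m=1$.
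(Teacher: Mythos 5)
Your proposal is correct and follows essentially the same route as the paper's proof: the same three families (tree word $1\,0^n$, short trajectory, for $m\not\equiv 3\pmod 5$; tree word $1\,0^n$, long trajectory, for $m\equiv 3\pmod{10}$; and tree word $2\,0^n$, short trajectory, for $m\equiv 8\pmod{10}$), with the same symmetry check via backward walks on the graph of Figure~\ref{colorgraph} and the same period count via Theorem~\ref{thm:double}. The one place you go beyond the paper is the case $m=1$: the paper's recipe ``$1$ followed by $k-2$ zeros'' is vacuous at $k=1$, and your appeal to the non-core trajectories parallel to an edge (period $10$, rotational but not bilateral symmetry, via Lemma~\ref{lem:corecurve}) correctly fills that small gap.
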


\begin{proof}
  Given $k\in\N$, the trajectory corresponding to the following tree word has period $10k$:
  \begin{enumerate}
    \item If $k\not\equiv 3$ (mod 5), 1 followed by $k-2$ 0s, short trajectory. 
    \item If $k\equiv 3$ (mod 10), 1 followed by $(k-3)/2$ 0s, long trajectory. 
    \item If $k\equiv 8$ (mod 10), 2 followed by $(k-2)/2$ 0s, short trajectory. 
  \end{enumerate}
  These follow from using the directed graph in Figure~\ref{colorgraph} and the Combinatorial Period Theorem~\ref{thm:double}. For case (1), if $k\not\equiv 3$ (mod 5), then $k-2 \not\equiv 1$ (mod 5). We start at $A$ and follow $\sigma_0$ backwards $k-2$ times, landing on $A,B,C$ or $D$. Then we follow $\sigma_1$ backwards, landing on $D,A,E$ or $A$, respectively, all of which correspond to a path with rotational symmetry. (Notice that if $k\equiv 3$, this process results in landing on $F$, yielding a path without rotational symmetry.)

  The direction vector ${\sigma_0}^{k-2} \sigma_1 \sv 1 0 = \sv{(k-1)\phi}{\phi}$, so since the path has rotational symmetry, the period is $10k$, as desired. The other cases are similar.

\end{proof}

Based on computer evidence, we conjecture the following:

\begin{conjecture} Every even number arises as the combinatorial period of an asymmetric periodic billiard trajectory on the pentagon, other than 2, 12, 14 and 18.
\end{conjecture}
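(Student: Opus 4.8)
The plan is to translate the statement about billiard periods into an arithmetic statement about golden-L saddle connection vectors, and then to realize the required periods by explicit families. By Theorem~\ref{thm:hecke}(4), a periodic billiard trajectory is asymmetric (its direction ends on node $F$) exactly when its combinatorial period equals the double pentagon period, and then by Theorem~\ref{thm:double} the two trajectories in a periodic direction with long saddle connection vector $[a+b\phi,c+d\phi]$ have billiard periods $2(a+b+c+d)$ and $2(a+2b+c+2d)$. Moreover, combining Theorem~\ref{thm:symm} with Theorem~\ref{thm:dft}, a direction is asymmetric precisely when $\kappa:=(d-b)+2(c-a)\equiv 0 \pmod 5$. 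Thus the conjecture is equivalent to the assertion that, as $[a+b\phi,c+d\phi]$ ranges over the saddle connection vectors in $\Lambda$ with $\kappa\equiv 0 \pmod 5$, the set of values $\{2(a+b+c+d)\}\cup\{2(a+2b+c+2d)\}$ is exactly the even numbers other than $2,12,14,18$.

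First I would set up the bookkeeping for how $\kappa$ moves along the tree, using the coefficient action of Lemma~\ref{lem:sigmas-on-abcd}. A direct computation shows that applying $\sigma_3$ changes $\kappa$ by $a+3b \pmod 5$ and applying $\sigma_0$ changes it by $-(c+d)\pmod 5$. Since $\sigma_3$ fixes $a$ and $b$, the increment $a+3b$ is constant along an entire tower $\sigma_3^{\,n}v_0$; likewise, along a tower $\sigma_0^{\,n}v_0$ whose seed has $d=0$ the increment $-c$ is constant. Hence each such tower meets the asymmetric locus $\{\kappa\equiv0\}$ in an arithmetic progression of indices, and along that progression $a+b+c+d$ (and $a+2b+c+2d$) grows linearly with a fixed step, so each seed contributes one or two arithmetic progressions of realizable even periods. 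The family $1\cdot 0^{\,n}$, for instance, is a $\sigma_0$-tower with $c=1$, $d=0$, asymmetric exactly when $n\equiv1\pmod5$, and it contributes the short periods $\equiv 6 \pmod{10}$ and the long periods $\equiv 10 \pmod{20}$.

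The constructive half then consists of assembling a finite list of seeds whose towers — together with the long trajectories, and with the mirror families obtained from the tree symmetry of Theorem~\ref{thm:tree_symm}, which preserves $F$ — cover every residue class of the relevant common modulus beyond an explicit threshold. Once these progressions jointly account for all sufficiently large even numbers, the finitely many remaining small even numbers are handled by hand: for each target period below the threshold one exhibits a single explicit asymmetric tree word, verifying that $4,6,8,10,16,20,\dots$ all occur.

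For the excluded values $2,12,14,18$ I would prove impossibility by a finite search: a period at most $18$ forces $a+b+c+d\le 9$, and since every generator other than $\sigma_0$ on the horizontal axis strictly increases $a+b+c+d$ (the growth estimate behind Corollary~\ref{cor:depth_needed}), the tree depth of any such direction is bounded, leaving only finitely many tree words to enumerate, filter by $\kappa\equiv0\pmod5$, and evaluate. The main obstacle is the coverage analysis of the third step. Because asymmetry is the sparse congruence condition $\kappa\equiv0\pmod5$ (only $1/6$ of directions, by Corollary~\ref{cor:sixths}), each single tower realizes periods only in a coarse progression, typically of common difference $10$ or $20$, so one must locate enough seeds, with the right offsets, that the union of their short and long progressions has no gaps past the threshold beyond the four exceptions. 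Producing such a seed list, and \emph{proving} its completeness rather than merely checking it numerically, is the delicate point, and is presumably why the statement remains only a conjecture.
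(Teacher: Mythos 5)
You should first note that this statement is a \emph{conjecture} in the paper: the authors give no proof at all, only the remark that it has been verified by computer up to period $4000$ (their verification uses exactly the congruence you derive, implemented in the appendix code). Your reduction is correct and is essentially the same reformulation that underlies that numerical check: by Theorem~\ref{thm:hecke} an asymmetric billiard trajectory is precisely one whose billiard period equals its double pentagon period, by Theorem~\ref{thm:double} the two periods in a direction with long saddle connection vector $[a+b\phi,c+d\phi]$ are $2(a+b+c+d)$ and $2(a+2b+c+2d)$, and by Theorems~\ref{thm:symm} and~\ref{thm:dft} asymmetry holds exactly when $\kappa=(d-b)+2(c-a)\equiv 0\pmod 5$. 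Your tower bookkeeping is also essentially right, with one arithmetic slip: from Lemma~\ref{lem:sigmas-on-abcd}, applying $\sigma_0$ changes $\kappa$ by $-(c+3d)\pmod 5$, not $-(c+d)$. This is harmless for your application, since $\sigma_0$ fixes both $c$ and $d$, so the increment is constant along \emph{every} $\sigma_0$-tower (your restriction to seeds with $d=0$ is unnecessary), and for the family $1\cdot 0^n$, where $(a,b,c,d)=(0,n+1,1,0)$, your conclusions are correct: asymmetric exactly when $n\equiv 1\pmod 5$, short periods $\equiv 6\pmod{10}$, long periods $\equiv 10\pmod{20}$.

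The genuine gap is your third step, and it is not a technicality to be deferred: it \emph{is} the conjecture. You never produce the seed list, never exhibit the covering system of arithmetic progressions, and never carry out the finite enumeration ruling out $2$, $12$, $14$, $18$ (that last part is indeed a terminating computation, since each $\sigma_i$ weakly increases $a+b+c+d$, with equality only for $\sigma_0$ fixing $\sv 10$, so only finitely many tree words yield $a+b+c+d\le 9$). More seriously, nothing in the paper or in your proposal shows that finitely many towers \emph{can} suffice: each tower contributes one explicit arithmetic progression of asymmetric periods, but it is not known a priori that the set of asymmetric periods contains a finite union of such progressions whose complement in the even numbers is finite. Corollary~\ref{cor:sixths} counts asymmetric \emph{directions} asymptotically; it says nothing about which residue classes of \emph{periods} they realize. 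So what you have is a correct equivalence plus a plausible attack, matching how the authors test the conjecture numerically, but the statement remains open after your proposal, exactly as it is in the paper.
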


We have verified this up to period 4000.

\begin{conjecture}
  For trajectories with only reflection symmetry, the growth rate is linear: There are asymptotically $n/4$ trajectories that have combinatorial period $2n$.
\end{conjecture}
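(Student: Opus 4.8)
The plan is to convert the statement into a lattice-point count over $\Lambda$ graded by combinatorial period, establish the correct order of growth by comparison with Euclidean length, and then pin down the constant via the self-similar (transfer-operator) structure of the cone contractions $\hat\sigma_i$.

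First I would make the count explicit. By Theorem~\ref{thm:dft}, a periodic direction with long saddle connection vector $[a+b\phi,c+d\phi]$ produces a reflection-only (non-rotational) billiard trajectory exactly when $(d-b)+2(c-a)\equiv 0 \pmod 5$, and in that case the pentagon billiard period equals the double pentagon period. By Theorem~\ref{thm:double} the two trajectories in such a direction then have periods $2(a+b+c+d)$ (short) and $2(a+2b+c+2d)$ (long). Writing $R(v)$ for the reflection-only congruence, the quantity to estimate is
\[
C(n) = \#\{v\in\Lambda : R(v),\ a+b+c+d=n\} + \#\{v\in\Lambda : R(v),\ a+2b+c+2d=n\},
\]
and the claim is $C(n)\sim n/4$.

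The order of growth comes from observing that the complexity $c(v)=a+b+c+d$ is equivalent to a genuine norm on holonomy vectors. Writing $x=a+b\phi$, $y=c+d\phi$, nonnegativity of $a,b,c,d$ gives $x+y = c(v)+(b+d)\bar\phi$, whence $\tfrac{1}{\phi}(x+y)\le c(v)\le x+y$, so $c(v)\asymp|v|$. Veech's quadratic growth of saddle connections on a lattice surface then yields $\#\{v\in\Lambda : c(v)\le T\}\sim\kappa T^{2}$ for a constant $\kappa$ (the direction-weighted Siegel--Veech constant for the norm $c$), and hence $\#\{v : c(v)=n\}\sim 2\kappa n$, linear per level, on each of the short and long gradings.

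To capture the constant and the restriction $R(v)$, I would use the bijection of Theorem~\ref{thm:saddles} between $\Lambda$ and tree words, under which $v$ is built from $(a,b,c,d)=(1,0,0,0)$ by the nonnegative integer matrices $\hat\sigma_i$ of \S\ref{subsec:extended}; these are exactly the cone contractions of Remark~\ref{rem:fractal}, whose attractor is the polytope of Figure~\ref{fig:polytope}. The normalized vectors $(a,b,c,d)/n$ equidistribute toward the associated self-similar measure, and the congruence class $R(v)$ is transported as a decoration by the $\mathbf{F}_5$-projective action underlying the six-node Schreier graph of Figure~\ref{colorgraph}. I would assemble this into a transfer operator $\mathcal{L}_z$ on functions over (polytope)$\times$(six nodes) weighted by $z^{c(\cdot)}$; a spectral-gap (Perron--Frobenius) analysis shows the generating function $\sum_n \#\{c(v)=n\}\,z^n$ is meromorphic with a double pole at $z=1$, whose residue recovers $\kappa$, while the stationary weight $1/6$ of node $F$ from Corollary~\ref{cor:sixths} extracts the reflection-only part. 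Combining the short and long gradings should collapse $C(n)$ to the asymptotic $n/4$.

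The main obstacle lies entirely in this last step, in two linked forms. The grading is the \emph{combinatorial} length $c(v)$ rather than Euclidean length, so the clean Siegel--Veech constant does not apply verbatim; one must control $\mathcal{L}_z$ directly (verifying the spectral gap, and the aperiodicity needed to exclude oscillatory corrections), which is precisely where $\kappa$ is determined. Equally delicate is upgrading the \emph{fixed-depth} equidistribution of Corollary~\ref{cor:sixths} to equidistribution at \emph{fixed complexity}, i.e.\ proving the Schreier node is asymptotically independent of $c(v)$, and then verifying that the short and long contributions combine to the exact value $1/4$ rather than merely an unspecified linear rate. This exact-constant bookkeeping is the part that the conjecture's computational verification leaves open.
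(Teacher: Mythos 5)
You should know at the outset that the paper does not prove this statement: it appears there as a \emph{conjecture}, supported only by computer experiments, so there is no proof of record to compare yours against. Judged on its own terms, your proposal is a plausible programme rather than a proof, though its first step is genuinely correct: by Theorem~\ref{thm:symm} (equivalently the congruence criterion of Theorem~\ref{thm:dft}) the reflection-only directions are exactly those landing on node $F$, for such directions the billiard period equals the double pentagon period by Theorem~\ref{thm:geom-symm}, and Theorem~\ref{thm:double} then converts ``period $2n$'' into $a+b+c+d=n$ for the short trajectory and $a+2b+c+2d=n$ for the long one. That correctly identifies the quantity $C(n)$ to be estimated.

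Everything after that is a statement of intent, and the gaps sit exactly where the mathematical content would be. First, the sandwich $\tfrac1\phi(x+y)\le c(v)\le x+y$ only pins the cumulative count $\#\{v\in\Lambda: c(v)\le T\}$ between two quadratics whose leading constants differ by a factor of $\phi^2$; it determines no constant $\kappa$, and even an exact asymptotic $\sim\kappa T^2$ would not give the per-level statement $\#\{c(v)=n\}\sim 2\kappa n$ without a separate non-oscillation argument. You name a transfer operator $\mathcal{L}_z$ that is supposed to supply both, but you neither construct it precisely nor establish the spectral gap and aperiodicity you invoke; that is the entire difficulty, not a technical afterthought. Second, Corollary~\ref{cor:sixths} equidistributes the Schreier node at fixed \emph{tree depth}, whereas your count needs it at fixed \emph{complexity} $c(v)=n$; these gradings are badly incompatible (words of depth $k$ have complexities ranging from about $k$ up to an exponentially large value), so the upgrade is a genuine joint-equidistribution theorem, not bookkeeping. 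Third, the constant $1/4$ is never derived: granting both missing ingredients you would obtain $C(n)\sim(\alpha+\alpha')n/6$ with $\alpha,\alpha'$ the unknown leading constants of the short and long gradings, and nothing in the proposal shows $\alpha+\alpha'=3/2$. Finally, there is an unresolved counting convention: an $F$-type direction folds to five congruent billiard trajectories differing by rotation, and whether these count once or five times changes the answer by a factor of $5$, so the convention must be fixed before the constant can even be checked against the data. In short: a sound reduction and a sensible framework, but the analytic core that would turn the authors' conjecture into a theorem is absent --- as you yourself acknowledge in your closing paragraph.
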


\subsection{Non-equidistribution of periodic billiard trajectories}\label{sec:non-equi}

A given periodic trajectory consists of a finite number of line segments, and thus by definition does not equidistribute. However, we can ask if a sequence of longer and longer periodic trajectories equidistributes in the limit. Most long trajectories do ``equidistribute'' in this sense, but some do not (Figure \ref{fig:non-equi}).

\begin{figure}[!ht]
  \centering
  \includegraphics[width=0.45\textwidth]{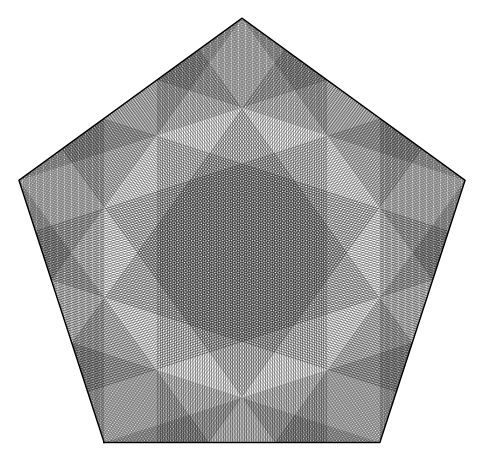} \ \
  \includegraphics[width=0.45\textwidth]{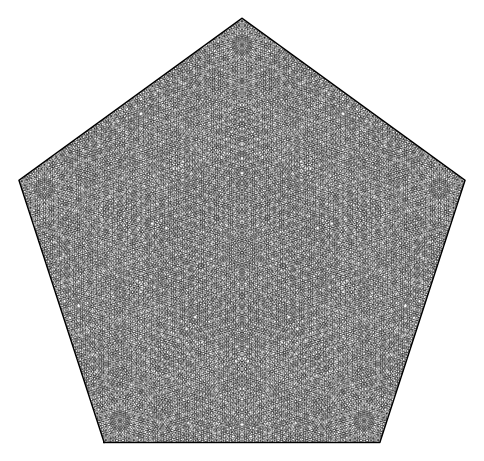}
\caption{Two periodic paths on the pentagon: (a) The short path corresponding to tree word 1000000000000000000000000000000000002, with combinatorial length 1460 and geometric length $\sim$909. (b) The short path corresponding to tree word 12121, with combinatorial length 1470 and geometric length $\sim$964. Both are drawn with the same line thickness, and correspond to the core curve of the cylinder. \label{fig:non-equi}}
\end{figure}


\begin{remark}\label{rem:pudding}
  The only way to get non-equidistributed periodic paths like Figure \ref{fig:non-equi}(a) is with long strings of horizontal ($\sigma_0$) or vertical ($\sigma_3$) Dehn twists. Any initial product of $\sigma_i$s determines how much trajectory is in each cylinder, the product of $\sigma_0$ or $\sigma_3$ twists the horizontal or vertical cylinder to equidistribute the trajectory within the cylinder, and then any ending product of $\sigma_i$s transforms the cylinder direction out of horizontal. See Equation \eqref{eq:pudding} and Figure \ref{fig:pudding}. We thank Barak Weiss for suggesting this construction.
\end{remark}

\begin{equation}\label{eq:pudding}
  \text{tree word} = \overbrace{1}^{\hidewidth\text{controls amount of trajectory in each cylinder}}\underbrace{00000000000000000000000000000000000}_{\text{horizontal Dehn twists}}\overbrace{2}^{\hidewidth\text{transforms cylinder direction}}
\end{equation}

\begin{figure}[!ht]
  \centering
  \includegraphics[width=0.3\textwidth]{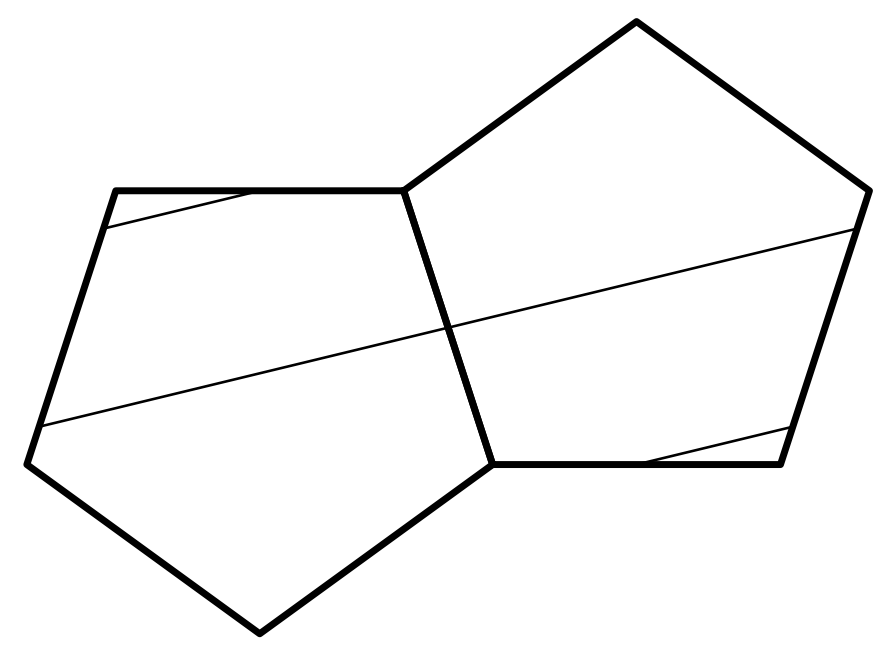} \ \
  \includegraphics[width=0.3\textwidth]{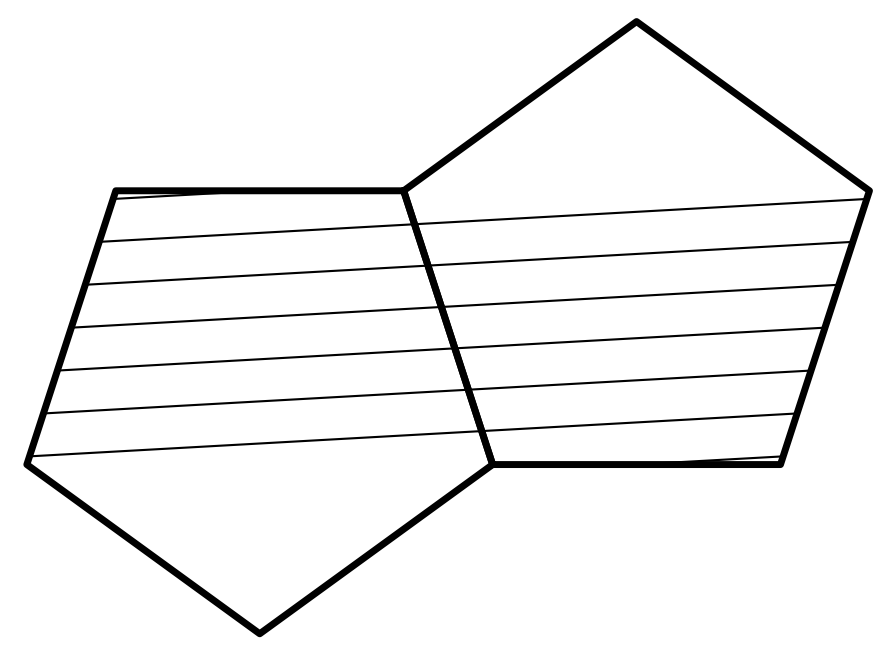} \ \
  \includegraphics[width=0.3\textwidth]{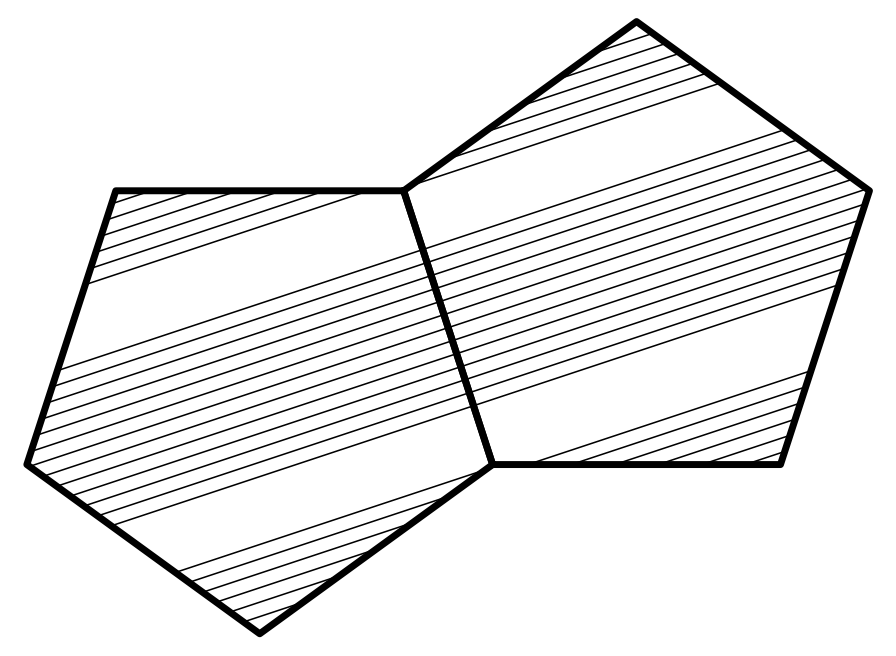}
  \caption{The method behind Figure \ref{fig:non-equi}(a), shown for a simpler example: The short periodic paths on the double pentagon corresponding to tree words 1, 100000, and 1000002, respectively, illustrating Remark \ref{rem:pudding} and Equation \eqref{eq:pudding}. To get the billiard trajectory picture, fold the third double pentagon picture above so that the two pentagons align, and then superimpose four more copies of it on top, in the four other orientations.  \label{fig:pudding}}
\end{figure}

In the family of periodic trajectories with tree words of the form $1000\ldots$ (shown in Figure \ref{fig:all_evens}), the ratio of the ``trajectory concentration'' (length of trajectory per unit area) in the small and large horizontal cylinder is 0; in a nearly equidistributed trajectory, it is near 1. For each finite word $w\in\{0,1,2,3\}^*$, look at the limit of trajectories of the form $w\ 0^n$, which look like horizontal cylinder decomposition pictures with shaded cylinders, because the trajectory equidistributes within each horizontal cylinder (converges to Lebesgue measure) as $n\to\infty$. The darkness of the shading is proportional to the ratio of line length to area in each horizontal cylinder, and depends only on $w$. For most trajectories, the ratio of shading is close to 1. We conjecture that:

\begin{conjecture}\label{conj:mcm}
  The ratios of trajectory concentrations in the horizontal cylinders for a trajectory corresponding to a tree word of the form $w\ 0^n$ forms a Cantor set.
\end{conjecture}

McMullen has since shown Conjecture \ref{conj:mcm} to be false. In fact, the set is more complicated than a Cantor set, and is homeomorphic to $\omega^\omega+1$; see \cite{mod}.

The double pentagon and the necklace are Veech surfaces, and thus exhibit \emph{optimal dynamics}: every trajectory is either periodic or equidistributed. As we saw in Figure \ref{fig:non-equi}, it is possible to find periodic {billiard} trajectories that are arbitrarily dense, but \emph{not} equidistributed. In fact, we can find such periodic trajectories that never visit some part of the table \emph{at all}. Figure \ref{fig:gap} shows examples of two families we have found that exhibit this surprising behavior.

\begin{figure}[!ht]
  \centering
  \includegraphics[width=0.48\textwidth]{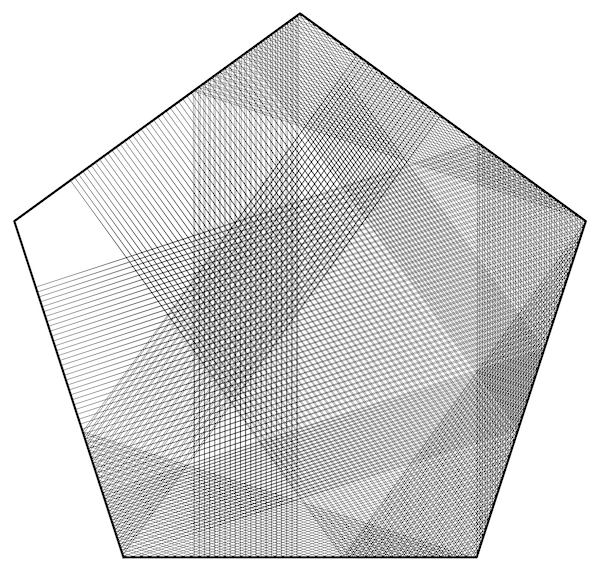} \ \ 
  \includegraphics[width=0.48\textwidth]{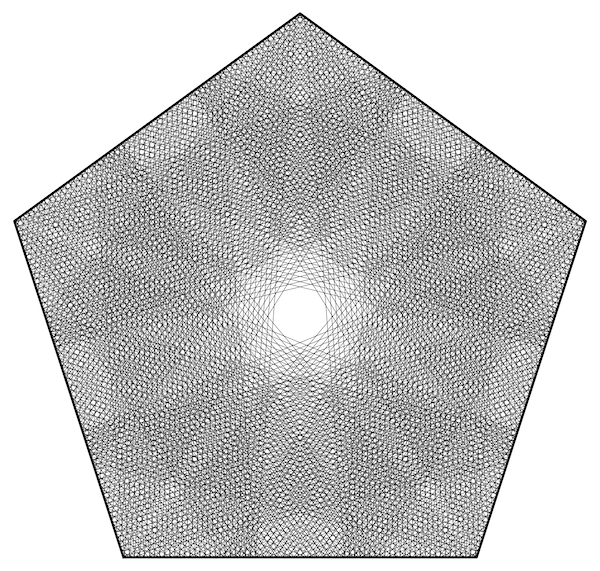} \ \ 
  \caption{Representatives (with $n=20$) of the families of short trajectories with tree words of the form $123^n 1$ and $10^n 1$, respectively, which all miss part of the table. Another family is very similar to the family on the left, and has tree words of the form $303^n1$. \label{fig:gap}}
\end{figure}

\subsection{More conjectures and questions}

In this section we list some more questions that arose out of our work. See also the earlier Question \ref{q:five-sixths}.

\begin{question}
  Some billiard trajectories appear to have ``holes,'' for example the one on the right in Figure~\ref{fig:buddies}. What can we say about the holes?
\end{question}

\begin{question}
  (Curtis McMullen) If you pick an edge midpoint and a periodic direction, how can you tell if it will give a short or a long trajectory?
\end{question}

Our Summer@ICERM 2021 research students subsequently gave an algorithm answering this question \cite[Algorithm 3.2]{icerm21}.

\begin{question}
  So far, the structure of the set of periodic directions and periodic trajectories is only well understood for the rectangle and regular pentagon billiard tables, and possibly the regular hexagon. What is its structure for other tables, such as the regular heptagon and octagon?
\end{question}

\begin{question}
  What is the relationship between  geometric vs.\ combinatorial lengths of saddle connection vectors, and what are the statistics of their distribution?
\end{question}

\begin{question}
  Is there a substitution to go from the short to the long billiard word in a given periodic direction?
\end{question}


Let $p(n)$ be the complexity of the billiard language of a billiard table. Julien Cassaigne, Pascal Hubert, and Serge Troubetzkoy proved that the language complexity $p(n)$ for the billiard language of a convex polygon is always asymptotic to a cubic \cite{cht}. They gave formulas for the limit $\lim_{n\to\infty} \frac{p(n)}{n^3}$ in the square $\left(\frac{4}{\pi^2}\right)$, equilateral triangle $\left(\frac{2}{3\pi^2}\right)$ and isosceles right triangle $\left(\frac{3}{4\pi^2}\right)$. At the end of \cite[\S 1]{cht}, the authors remark that ``It would be interesting to know if the limit $\lim_{n\to\infty} \frac{p(n)}{n^3}$ exists in the case of Veech polygons.'' Our computational evidence suggests that:

\begin{conjecture}
  For the regular pentagon billiard table, we have: $\displaystyle\lim_{n\to\infty}\frac{p(n)}{n^3} = \frac{10}{\pi^2}.$
\end{conjecture}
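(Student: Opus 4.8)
The plan is to realize $p(n)$ through the combinatorics of factors, following the framework of Cassaigne--Hubert--Troubetzkoy \cite{cht}, and then to compute the resulting leading constant from the Veech structure established in this paper. Writing $s(\ell)=p(\ell+1)-p(\ell)$ for the number of right-special factors of length $\ell$ (counted with branching multiplicity), we have $p(n)=p(1)+\sum_{\ell=1}^{n-1}s(\ell)$. A right-special factor of length $\ell$ is a length-$\ell$ trajectory window that terminates in a vertex-approach, so these are in asymptotic bijection with generalized diagonals (corner-to-corner segments) of combinatorial length at most $\ell$. Hence, if $D(m)$ denotes the number of generalized diagonals of combinatorial length exactly $m$ and $\sum_{m\le M}D(m)\sim C_1 M^2$, then $s(\ell)\sim C_1\ell^2$ and
\[
p(n)\;\sim\;\sum_{\ell=1}^{n-1}C_1\ell^2\;\sim\;\frac{C_1}{3}\,n^3 .
\]
Thus the whole problem reduces to the clean quadratic count of generalized diagonals, and the conjecture becomes equivalent to the statement $C_1=30/\pi^2$.

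The next step is to count generalized diagonals by combinatorial length using the tree. Unfolding the billiard to the necklace (hence, through the covers, to the double pentagon and the golden L) turns a generalized diagonal into a saddle connection, and by the Combinatorial Period Theorem~\ref{thm:double} a saddle connection with holonomy $[a+b\phi,c+d\phi]$ has combinatorial length a fixed multiple of $a+b+c+d$. So ``combinatorial length $\le M$'' is an $L^1$-ball $\{a+b+c+d\le cM\}$ in the coefficient coordinates, and the saddle connections are exactly the tree vectors $\hat\sigma_{k_0}\cdots\hat\sigma_{k_n}(1,0,0,0)^{T}$ produced by the explicit integer matrices $\hat\sigma_i$ of the extended proof of Theorem~\ref{thm:double}. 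This is a Siegel--Veech counting problem, but measured in the \emph{combinatorial} metric rather than the Euclidean one. I would handle it by introducing the ratio $\rho(\theta)=(\text{combinatorial period})/(\text{geometric length})$ as a function of direction, observing that $\rho$ is bounded above and below, and then using equidistribution of saddle-connection directions on this Veech surface (whose Veech group is the $(2,5,\infty)$ triangle group, of hyperbolic covolume $3\pi/10$) to convert the Euclidean Siegel--Veech count $N_{\mathrm{geom}}(R)\sim c_{\mathrm{geom}}\pi R^2$ into
\[
\sum_{m\le M}D(m)\;\sim\;c_{\mathrm{geom}}\,M^2\cdot\frac{1}{2\pi}\int_0^{2\pi}\rho(\theta)^{-2}\,d\theta .
\]
Here $c_{\mathrm{geom}}$ is computable from the lattice data, and the weighted integral can be evaluated on a fundamental domain using the explicit vectors $[a+b\phi,c+d\phi]$ from Theorem~\ref{thm:abcd} and the tree description of $\Lambda$ from Theorem~\ref{thm:saddles}. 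The final passage from the double pentagon to the pentagon billiard table then inserts the symmetry weighting of Corollary~\ref{cor:sixths} (the $\tfrac56$ of directions whose billiard length is five times the surface length, against the $\tfrac16$ that are unchanged), which I expect to be exactly what forces the clean rational coefficient $10/\pi^2$.

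The main obstacle is precisely the combinatorial-versus-geometric length discrepancy, which is the content of one of the open questions stated just above this conjecture. Unlike the square, where combinatorial and geometric length are proportional in each direction, here $\rho(\theta)$ genuinely varies, so the combinatorial Siegel--Veech count is not a mere rescaling of the Euclidean one; carrying out the integral $\int\rho(\theta)^{-2}\,d\theta$ and proving that the saddle-connection directions equidistribute fast enough for the error terms in both the $D(m)$ asymptotics and the cell-counting step to be $o(n^3)$ is the crux. A secondary difficulty is justifying rigorously that $s(\ell)$ is asymptotic to the count of generalized diagonals of combinatorial length $\le\ell$ (controlling degenerate and unbounded cells and the multiplicity with which a factor is realized); for this the explicit transition diagrams and substitution rules $r_i$ of \S\ref{sec:cutting}, together with the single-direction complexity $15n+10$, should serve as a concrete check on the leading constant.
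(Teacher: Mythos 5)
This statement is a \emph{conjecture} in the paper, put forward on the strength of computational evidence; the authors prove nothing about it, and indeed they quote Cassaigne--Hubert--Troubetzkoy \cite{cht} to the effect that even the \emph{existence} of the limit $\lim p(n)/n^3$ is open for Veech polygons. So there is no proof in the paper to compare yours against, and your proposal must stand on its own as a complete argument --- which it does not, as you yourself acknowledge.

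Your framework is the natural one: the CHT reduction of complexity to counting generalized diagonals, i.e.\ $p(n)=p(1)+\sum_{\ell<n}s(\ell)$ with $s(\ell)$ asymptotic to the cumulative count of generalized diagonals of combinatorial length at most $\ell$, followed by a Siegel--Veech-type quadratic count, which would give $p(n)\sim \tfrac{C_1}{3}n^3$. But every step that would actually produce the number $10/\pi^2$ is left open. First, the ``asymptotic bijection'' between right-special factors (with branching multiplicity) and generalized diagonals is asserted, not proved; in \cite{cht} the analogous identity is established only for tables that tile the plane by reflection, and constant-level errors here (factors of $2$, multiplicities, orientation conventions) would change $C_1$ and hence falsify or verify the conjecture. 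Second, and more seriously, the combinatorial Siegel--Veech count is never carried out: you introduce the direction-dependent ratio $\rho(\theta)$ between combinatorial and geometric length, write the answer as $c_{\mathrm{geom}}\cdot\frac{1}{2\pi}\int_0^{2\pi}\rho(\theta)^{-2}\,d\theta$, and then stop --- neither $c_{\mathrm{geom}}$ nor the integral is evaluated, no equidistribution-in-sectors statement with error control is invoked in a form strong enough to justify the weighted count, and the passage from the double pentagon to the billiard table via the $5/6$--$1/6$ dichotomy of Corollary~\ref{cor:sixths} is a heuristic weighting rather than a computation (note also that Corollary~\ref{cor:sixths} counts tree words by depth, not saddle connections by length, so even using it here requires the very equidistribution-across-length-scales question the paper leaves open). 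The net effect is that you have reduced the conjecture $\lim p(n)/n^3=10/\pi^2$ to the unproven statement $C_1=30/\pi^2$, together with a plausible but unexecuted program for computing $C_1$. That is a sensible research plan --- arguably the right one --- but it is not a proof, and in particular nothing in it certifies that the constant comes out to $10/\pi^2$ rather than some other multiple of $1/\pi^2$.
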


\begin{question}
  Corollary \ref{cor:sixths} shows that, at a given depth $n$ in the tree, paths equidistribute with respect to landing on elements of the Veech group in the graph in Figure \ref{colorgraph}. Is the same true if we measure with respect to combinatorial length, or geometric length, instead of tree word length?
\end{question}




\begin{question}
  What is the language of the bounce sequences, of sides of the pentagon that the trajectory hits? Note that this is different from the language of cutting sequences on the double pentagon surface, because the labels of corresponding edges are different. It would be interesting to study valid bounce sequences, in the method of Smillie and Ulcigrai \cite{SU}.
\end{question}

\begin{question}
  (See \cite{leutbecher}) There is a simple test to see if a matrix is in the Veech group of the square torus: it must have integer entries and determinant 1. Is there are similarly simple characterization for matrices in the Veech group of the double pentagon or golden L?
\end{question}

\begin{remark}\label{rem:fractal}
The continued fraction algorithm outlined in \S~\ref{sec:cont_frac} involves arithmetic in $\Q(\sqrt{5})$, a quadratic extension of $\Q$, which can naturally be seen as a 2-dimensional vector space over $\Q$. The continued fraction algorithm actually acts on $\Z[\phi]^2$, and involves four $2\times 2$ matrices $\sigma_i$ with coefficients in $\Z[\phi]$. It is natural to see $\Z[\phi]^2$ as a 4-dimensional $\Z$-module, and to view the algorithm as operating in 4 dimensions. This is what the matrices $\hat{\sigma}_i$ from \S~\ref{subsec:extended} do. A natural domain for the corresponding dynamical system is the locus where, at each step, we can apply at least one of the $\hat{\sigma}_i^{-1}$ and remain in the positive cone: in other words, the intersection as $k\to\infty$ of the union of images of the positive cone of $\mathbf{R}^4$ under products of length $k$ of matrices among the $\hat{\sigma}_i$. This is a fractal object living in the 4-dimensional positive cone. It is invariant under scaling, so we take a 3-dimensional section where the sum of the coordinates is 1, which is a subset of a 3-dimensional simplex. At each step $k$, it is a union of tetrahedra meeting along edges (Figure \ref{fig:polytope}).

This process is reminiscent of the Rauzy gasket one dimension down: The \emph{Rauzy gasket} is a 2-dimensional section of a scale-invariant 3-dimensional object, obtained as a limit of objects which, at each step, are a union of triangles meeting at vertices. The Rauzy gasket is a subject of active research towards a solution to the Novikov problem \cite[Figure 2]{rauzy}.

John Smillie and Corinna Ulcigrai made a very similar picture associated to the regular octagon surface that they studied in \cite{SU}, which Smillie showed in a talk at Banff in February 2011. The picture did not appear in their paper, but they shared it with us after seeing the picture below. We include the picture here as this fractal seems to be a natural object, and we hope to inspire future study of this and related objects.\footnote{If everyone keeps brushing this thing under the rug, it's going to become a trip hazard.}
\end{remark}

\begin{figure}[!ht]
  \begin{center}
    \includegraphics[width=0.8\textwidth]{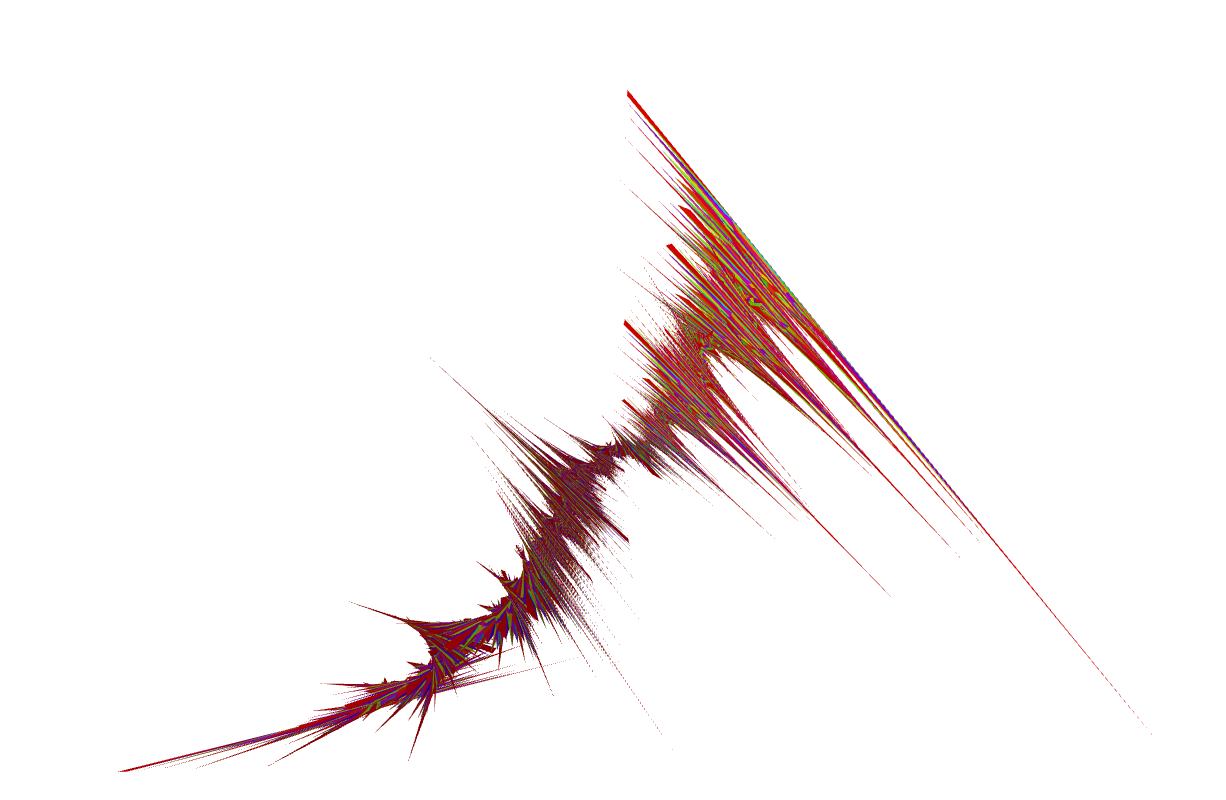}
    \caption{The seventh-level approximation to the fractal obtained via the iterated function system of cone contractions described in Remark \ref{rem:fractal}. \label{fig:polytope}}
  \end{center}
\end{figure}

\begin{question}\label{q:fractal}
  What is the fractal dimension of this object, and what properties does it have?
\end{question}



\begin{thebibliography}{}

  \bibitem[AS09]{arnoux} Pierre Arnoux, Thomas A. Schmidt,
  Veech surfaces with non-periodic directions in the trace field,
  Journal of Modern Dynamics 3 (2009), no. 4, 611--629. doi: 10.3934/jmd.2009.3.611

  \bibitem[ACL15]{acl} Jayadev Athreya, Jon Chaika, Samuel Leli\`evre, {The gap distribution of slopes on the golden L}. Contemporary Mathematics  631 (2015): 47--62.

  \bibitem[AAH20]{dodecahedron}
  Jayadev Athreya, David Aucilino, W. Patrick Hooper, Platonic Solids and High Genus Covers of Lattice Surfaces. Experimental Mathematics (2020).

  \bibitem[CHT02]{cht}
  Julien Cassaigne, Pascal Hubert, Serge Troubetzkoy, Complexity and growth for polygonal billiards, Annales de l'institut Fourier, Volume 52 (2002) no. 3,  835--847.

  \bibitem[D13]{davis13} Diana Davis, {Cutting sequences, regular polygons, and the Veech group}, Geometriae Dedicata, Volume 162, Issue 1 (2013), 231--261.

  \bibitem[D17]{flatsurfaces} Diana Davis, {Lines in positive genus: An introduction to flat surfaces}, in  {\em Dynamics done with your bare hands}, EMS Series of Lecture Notes (2017): \href{http://arxiv.org/abs/1507.02571}{arXiv:1507.02571}.

  \bibitem[DFT11]{DFT} Diana Davis, Dmitry Fuchs, Sergei Tabachnikov, {Periodic trajectories in the regular pentagon}, Moscow Mathematical Journal, Volume 11, Issue 3 (2011), 439--461.

  \bibitem[DPU18]{DPU} Diana Davis, Irene Pasquinelli, Corinna Ulcigrai, Cutting sequences on Bouw-M\"oller surfaces: an $\mathcal{S}$-adic characterization. Annales Scientifiques de l'Ecole Normale Sup\'erieure, 52(4) 927--1023 (2018).

\bibitem[DHS20]{rauzy}
Ivan Dynnikov, Pascal Hubert, Alexandra Skripchenko, Dynamical systems around the Rauzy gasket and their ergodic properties, preprint, arXiv:2011.15043.

\bibitem[ELM21]{icerm21}
Samuel Everett, Vanessa Lin, Aidan Mager, Long and short periodic billiard trajectories in the regular pentagon. preprint, arXiv:2111.09856.

  \bibitem[FK36]{fox}
  Ralph H. Fox, Richard B. Kershner, Concerning the transitive properties of geodesics on a rational polyhedron. Duke Mathematical Journal 2(1): 147--150 (1936).

\bibitem[H13]{Hooper}
W. Patrick Hooper, Grid graphs and lattice surfaces.
International Mathematics Research Notices, 12 (2013), no. 1, 2657--2698.

  \bibitem[LL15]{lang} Cheng Lien Lang and Mong Lung Lang, Arithmetic and geometry of the Hecke groups. Journal of Algebra, 460, 392--417.

  \bibitem[L74]{leutbecher}
  Armin Leutbecher, \"Uber die Heckeschen Gruppen $G(\lambda)$. II. Mathematische Annalen 211, 63--86 (1974).

  \bibitem[Ma86]{masur} Howard Masur, Closed trajectories for quadratic differentials with an application to billiards, Duke Mathematical Journal. Volume 53 (1986), 307--313. 

  \bibitem[Mc02]{infinite} Curtis T. McMullen, Teichm\"uller curves of infinite complexity. Acta Mathematica 191, 191--223 (2003).


  \bibitem[Mc03]{curt} Curtis T. McMullen, Billiards and Teichm\"uller curves on Hilbert modular surfaces. Journal of the American Mathematical Society 16 (2003), 857--885.

  \bibitem[Mc19]{mod} Curtis T. McMullen, Modular symbols for Teichm\"uller curves, Journal für die reine und angewandte Mathematik 777 89–125 (2021).

  \bibitem[S08]{100} Richard Schwartz, Obtuse Triangular Billiards II: 100 Degrees Worth of Periodic Trajectories. Journal of Experimental Mathematics, Volume 18, Issue 2 (2008), 137--171.

  \bibitem[S{\etalchar{+}}18]{sage}
  W.\thinspace{}A. Stein et~al., \emph{{S}age {M}athematics {S}oftware}, The Sage Development Team, 2018, {\tt
      http://www.sagemath.org}.

  \bibitem[CC18]{cocalc}
  SageMath, Inc., \emph{CoCalc Online Computational Mathematics}, 2018. {\tt https://cocalc.com/}.

  \bibitem[SU10]{SU} John Smillie, Corinna Ulcigrai, Geodesic flow on the Teichmüller disk of the regular octagon, cutting sequences and octagon continued fractions maps, Dynamical Numbers: Interplay between Dynamical Systems and Number Theory, Contemporary Mathematics, Volume 532 (2010), 29--65.

  \bibitem[SU11]{SU2} John Smillie, Corinna Ulcigrai, Beyond Sturmian sequences: coding linear trajectories in the
  regular octagon, Proceedings of the London Mathematical Society 102(2), 291--340 (2011).

  \bibitem[T18]{T18} Diaaeldin Taha, The Boca-Cobeli-Zaharescu Map Analogue for the Hecke Triangle Groups $G_q$, preprint (2019), arXiv:1810.10668.

  \bibitem[T19]{T19} Diaaeldin Taha, On Cross Sections to the Geodesic and Horocycle Flows on Quotients of $SL(2, \mathbb{R})$ by Hecke Triangle Groups $G_q$, preprint (2019), arXiv:/1906.07250.

  \bibitem[TGMM18]{twelve}
  George Tokarsky, Jacob Garber, Boyan Marinov, Kenneth Moore, One hundred and twelve point three degree theorem, preprint (2018), arXiv:1808.06667.

  \bibitem[V92]{veech}
  William Veech, The Billiard in a Regular Polygon, Geometric and functional analysis 2.3 (1992): 341--380.

  \bibitem[ZK75]{katok}
  A.N. Zemylakov and Anatole Katok, Topological transitivity of billiards in polygons, Matematicheskie Zametki 18, 291–-300 (1975).

\end{thebibliography}
\end{document}